\newtheorem{theorem}{Theorem}
\newtheorem{proposition}{Proposition}
\newtheorem{lemma}{Lemma}
\newtheorem{corollary}{Corollary}
\newtheorem{definition}{Definition}
\newtheorem{assumption}{Assumption}
\newtheorem{remark}{Remark}
\newcommand{\off}{{{\scriptscriptstyle\mathrm{off}}}}
\newcommand{\mla}{{{\scriptscriptstyle\mathrm{MLA}}}}
\newcommand{\all}{{{\scriptscriptstyle\mathrm{all}}}}
\newcommand{\Fkt}{\mathcal{F}_{k,n}}
\newcommand{\Zkt}{Z_{k,n}}
\newcommand{\mukt}{\hat{\mu}_{k,n}^\mla}
\newcommand{\mukntk}{\hat{\mu}_{k,n_{k,t}}^\mla}
\newcommand{\E}{\mathbb{E}}
\renewcommand{\P}{\mathbb{P}}
\newcommand{\I}{\mathbb{I}}
\newcommand{\N}{\mathcal{N}}
\newcommand{\var}{\mathrm{Var}}
\newcommand{\cov}{\ensuremath{\operatorname{Cov}}}
\newcommand{\correps}{\hat c_{k,n}}
\newcommand{\lb}{\left(}
\newcommand{\rb}{\right)}
\newcommand{\reg}{\operatorname{Reg_T}}
\renewcommand{\ln}{\log}
\newcommand\blfootnote[1]{%
  \begingroup
  \renewcommand\thefootnote{}\footnote{#1}%
  \addtocounter{footnote}{-1}%
  \endgroup
}
\title{Multi-Armed Bandits With Machine Learning-Generated Surrogate Rewards}
\author[1]{Wenlong Ji}
\author[2]{Yihan Pan}
\author[3]{Ruihao Zhu}
\author[1]{Lihua Lei}
\affil[1]{Stanford University}
\affil[2]{Northwestern University}
\affil[3]{Cornell University}
\begin{document}

\maketitle
\blfootnote{We thank Emma Brunskill, Emmanuel Candès, Edward Malthouse, Art Owen, Jann Spiess, and Stefan Wager for their helpful comments. L.L. and W.J. are grateful for the support of the National Science Foundation grant DMS-2338464.}
\begin{abstract}
Multi-armed bandit (MAB) is a widely adopted framework for sequential decision-making under uncertainty. Traditional bandit algorithms rely solely on online data, which tends to be scarce as it must be gathered during the online phase when the arms are actively pulled. However, in many practical settings, rich auxiliary data, such as covariates of past users, is available prior to deploying any arms. We introduce a new setting for MAB where pre-trained machine learning (ML) models are applied to convert side information and historical data into \emph{surrogate rewards}. A prominent challenge of this setting is that the surrogate rewards may exhibit substantial bias, as true reward data is typically unavailable in the offline phase, forcing ML predictions to heavily rely on extrapolation. To address the issue, we propose the Machine Learning-Assisted Upper Confidence Bound (MLA-UCB) algorithm, which can be applied to any reward prediction model and any form of auxiliary data. When the predicted and true rewards are jointly Gaussian, it provably improves the cumulative regret, even in cases where the mean surrogate reward completely misaligns with the true mean rewards, and achieves the asymptotic optimality among a broad class of policies. Notably, our method requires no prior knowledge of the covariance matrix between true and surrogate rewards.
We further extend the method to a batched reward MAB problem, where each arm pull yields a batch of observations and rewards may be non-Gaussian, and we derive computable confidence bounds and regret guarantees that improve upon classical UCB algorithms. Finally, extensive simulations with both Gaussian and ML-generated surrogates, together with real-world studies on language model selection and video recommendation, demonstrate consistent and often substantial regret reductions with moderate offline surrogate sample sizes and correlations.
\end{abstract}

\section{Introduction}\label{sec:intro}
The multi-armed bandit (MAB) framework is a widely adopted framework for sequential and interactive decision-making under incomplete information. In the MAB setting, a decision maker interacts with an unknown environment by sequentially selecting from a pre-specified set of actions (a.k.a. pulling arms). Each selected action yields a random reward from an action-dependent distribution that is unknown to the decision maker. A common objective is to choose actions to maximize cumulative rewards over many rounds or to minimize cumulative regret, defined as the loss incurred from not always selecting the ex-post optimal action. For example, a pharmaceutical company could apply an MAB framework to more efficiently identify the most effective drugs and dosages in clinical trials \cite{villar2015multi, aziz2021multi}, while minimizing risk to participants. Similarly, online news vendors deploy MAB algorithms to continuously refine content recommendations and enhance user engagement in real time \citep{li2010acontextual,coenen2019nyt,su2024exploration}. 

At the heart of the MAB problem lies the exploration-exploitation trade-off -- the decision maker must explore different actions sufficiently to discover the most rewarding one, while also feeling the pull to exploit the current best arm to maximize immediate gains. Many algorithms have been developed to address the trade-off, typified by the upper confidence bound (UCB) algorithm \citep{auer2002finite} and the Thompson sampling \citep{agrawal2012analysis,russo2018tutorial}, both of which provably achieve favorable regret bounds in the long run (e.g., \cite{lattimore2020bandit}). 

Despite the appealing theoretical guarantees, most existing algorithms operate solely in the online phase where the decision maker is actively pulling arms, without taking advantage of offline/historical data that is prevalent in practice. Although rewards are not directly observable in the offline phase before an arm is pulled, other variables may be informative about the reward distributions. To extract information, we can leverage machine learning algorithms, including pre-trained AI models like large language models (LLMs), to convert these offline variables into \emph{surrogate rewards}. If ML-generated surrogate rewards are correlated with unrealized true rewards through the input to the ML model, they have the potential to reduce regret even in the early stage by increasing the effective number of pulls. For example, in a clinical trial where the primary endpoint takes time to measure, such as five-year survival, the researcher can predict the long-term outcome using patient data from other trials and use the ML predictions as the surrogate rewards. Similarly, an online retailer can feed consumer profiles into LLMs to simulate purchasing behavior under the current marketing strategy or recommendation system, thereby obtaining surrogate rewards. 

As ML-generated surrogate rewards are much less expensive to acquire, it is tempting to use them in place of the costly true rewards to guide arm ranking. For example, the decision maker can pick -- or at least prioritize --- the best arms based on the mean surrogate rewards for each arm computed solely on offline data. However, this approach is ineffective unless the mean surrogate rewards preserve the ranking of the true mean rewards, an assumption that is unrealistic in most applications. In fact, most MAB applications aim to test new treatments and interventions that do not appear in historical data, and thus ML-generated surrogate rewards must rely on extrapolation to unseen environments. 
To provide context, we describe how to apply ML algorithms to generate surrogate rewards in two scenarios.

\begin{itemize}
\item (Surrogate rewards for parametrizable arms) When the arms correspond to new values of tuning parameters of a pricing or recommendation algorithm, and multiple values have been deployed in the past, a reward model can be fit jointly on user features and the parameter values using offline data. Such models enable extrapolation to generate arm-specific surrogate rewards for each unseen arm in the online phase. Nonetheless, the new values are often distant from the previous ones or even lie outside their convex hull. In such cases, the mean surrogate rewards may substantially misalign with the true mean rewards. 
\item (One-size-fits-all surrogate rewards) When the arms cannot be naturally parametrized and no similar arm has been deployed before, a reward model can be trained solely on user features under the status quo treatment. Such models produce the same surrogate rewards for all arms in the online phase. By definition, it is unable to inform the ranking of arms. 
\end{itemize}

In either of the above scenarios, the ML-generated surrogate rewards are inaccurate and untrustworthy. On the other hand, if the true rewards are not statistically independent of input to the ML models (e.g., user features) that generate surrogate rewards, they are typically correlated with the surrogate rewards. Motivated by these practical considerations, we focus on the setting where the surrogate rewards are correlated with the true rewards, but their means may be arbitrarily misaligned. Importantly, we require the surrogate rewards to be computable for all experimental units -- both offline and online -- allowing joint observation of the surrogate and true rewards for online units. This joint observability is essential for debiasing the surrogate rewards and leveraging their correlation with the true rewards. We argue that this is not a restrictive assumption, as it clearly holds in both aforementioned scenarios.

\subsection{Main contributions}

We begin by introducing the setting of MAB with ML-generated surrogate rewards, which is relevant to a wide range of applications. In this new setting, we propose the Machine Learning-Assisted Upper Confidence Bound (MLA-UCB) algorithm that combines the online true rewards with ML-generated surrogate rewards for both offline and online units. MLA-UCB integrates the prediction-powered inference (PPI) approach \citep{angelopoulos2023prediction,angelopoulos2023ppi++} to improve estimation precision of mean rewards for each arm by leveraging the surrogate rewards. Algorithmically, unlike most UCB algorithms, our MLA-UCB algorithm does not require the knowledge of the variance, or the sub-Gaussian parameter, of the true or surrogate rewards. Instead, it constructs upper confidence bounds by inverting a one-sample t-test with a novel variance estimate and a round-dependent significance level.

Under the assumption that the true and surrogate rewards are bivariate Gaussian,  with an arbitrary unknown mean and covariance matrix, we derive a theoretical upper bound on the cumulative regret. The bound is never worse than the regret lower bound for MAB problems without surrogate rewards and strictly improves upon it when the correlation between surrogate and true rewards is non-zero for any suboptimal arm. In particular, the surrogate rewards are allowed to have arbitrarily large bias because the regret reduction is achieved through variance reduction instead of direct data aggregation. Even when the correlations are all zero, owing to the use of t-tests and a novel exact distribution analysis (Proposition \ref{prop: pp mean distribution}), our regret bound strictly outperforms the best available UCB algorithm \cite{cowan2018normal} at the second order in MAB problems with unknown reward variance and no surrogate rewards. Moreover, we derive a lower bound for MAB problems with surrogate reward, which confirms the asymptotic optimality of our MLA-UCB algorithm with mild requirements on offline sample size. 

We further extend our framework beyond the Gaussian model in a batched reward setting, where a decision is deployed to a batch of individuals in each round. In this setting, even when the individual-level rewards are non-Gaussian, batching can induce approximate normality and enable a valid and computable confidence bound with mild assumptions on the distribution. We develop a batched variant of the MLA-UCB algorithm that uses a similar PPI-based mean estimation but constructs the confidence bound via self-normalized moderate deviation control \cite{petrov2012sums, jing2003self}. We prove that this variant of the MLA-UCB algorithm can still provably reduce the regret compared with the UCB algorithm with no surrogate rewards, demonstrating the general applicability of integrating surrogate rewards into bandit problems. 

Finally, we validate the theoretical insights and practical impact of MLA-UCB through a suite of numerical studies and real-world case studies. In controlled simulations under the Gaussian model, we demonstrate the sizable efficiency gain of our MLA-UCB algorithm over the classical UCB algorithm that does not leverage surrogate rewards, and observe a sharper reduction with larger offline samples and stronger correlation. We further test the robustness of our method using ML-generated, non-Gaussian surrogates produced from various nonlinear models, where the empirical gains remain pronounced. Beyond simulations, we demonstrate the workflow of using surrogate rewards in two real-world applications: language model selection, where open-source LLMs assist businesses in choosing proprietary models to gain better accuracy, and video recommendation, where ML predictions based on user features help to maximize user engagement on a video-sharing platform. In both cases, MLA-UCB consistently improves over UCB and benefits from more offline data. 

\subsection{Related works}

The use of historical data has been explored in various works in the bandit setting.  \cite{shivaswamy2012multi} and \cite{banerjee2022artificial} study a setting where the decision maker has access to historical rewards drawn from the same distribution as the true online rewards. However, this setting assumes that all arms are available prior to the start of the algorithm -- a condition that is rarely met in practice, as discussed above. \cite{cheung2024leveraging} considers a more general setting where rewards for each arm are available in the offline phase but their distributions may differ from the online rewards, though both share identical and known sub-Gaussianity parameters. Unlike our ML-generated surrogate rewards, they do not assume the observability of biased offline rewards in the online phase so they cannot leverage the correlation structure. Instead, they assume a known upper bound on the gap between the mean offline and online rewards to limit the adverse influence of the bias. Their UCB algorithm fails to improve upon the classical UCB algorithm without offline data if the bias bound is too conservative. Furthermore, it is unclear whether the regret remains vanishing if the bias bounds are invalid. In contrast, our MLA-UCB algorithm delivers improvement regardless of the magnitude of the bias and without the knowledge of a bound on the bias. This highlights the benefit of jointly observing surrogate and true rewards during the online phase. \cite{zhang2025contextual} further extends the settings of \cite{cheung2024leveraging} to the contextual bandit settings.

In a different vein, \cite{verma2021stochastic} studies a setting where no offline information is observed but a set of control variates is available alongside the true reward in the online phase. Specifically, the control variates in \cite{verma2021stochastic} can be viewed as surrogate rewards with a known mean. However, their regret bound has a non-standard form that cannot be directly compared to the existing bounds. In particular, unlike our MLA-UCB algorithm, it remains unclear whether their algorithm always outperforms the classical UCB algorithm. More importantly, it is impractical to know the exact mean of control variates, unless there is infinite offline data that has the same distribution as the online data to compute the mean precisely -- in fact, their algorithm is indeed similar to MLA-UCB with infinite surrogate rewards for each arm. However, it is often unrealistic to assume that historical data have no distribution shift over such a long horizon -- units further back in history typically differ from the online participants. In contrast to \cite{verma2021stochastic}, our MLA-UCB algorithm requires only a poly-logarithmic number of offline surrogate reward observations per arm that follow the same distribution as their online counterparts.

 More broadly, surrogate variables have proven effective in various areas, including surrogate outcome models \cite{pepe1992inference,robins1994estimation, chen2000unified}, causal inference \citep{scharfstein1999adjusting,robins1994estimation,glynn2010introduction, athey2019surrogate}, econometrics \citep{chen2005measurement, chen2008semiparametric}, semi-supervised inference \citep{zhang2019semi, azriel2022semi}, and control variates \citep{lavenberg1981perspective, nelson1990control}. In particular, our work builds off the recent development of PPI \cite{angelopoulos2023prediction, angelopoulos2023ppi++, zrnic2024active, gan2024prediction, gronsbell2024another}, which essentially use ML predictions as surrogates \citep{ji2025predictions}. All of these studies focus on asymptotic inference in a static, cross-sectional setting. Our MLA-UCB algorithm applies the idea of ML-generated surrogates to a dynamic setting with bandit feedback. Due to this complication, we need to make stronger distributional assumptions (i.e., joint Gaussianity) on the true and surrogate rewards and derive the finite sample distribution of the estimator for the purpose of regret analysis.

\section{Preliminaries}

\subsection{Standard MAB setting and UCB algorithms}
In an MAB problem, there are $K$ arms (each corresponds to an action), indexed by $k=1, \cdots, K$. We denote by $n_{k, t}$ the total number of pulls for arm $k$ right before any arm is pulled at round $t$. Pulling arm \( k \) at round \( t \) yields a random reward \( R_{k,n_{k, t}+1} \), which is assumed to be drawn independently from some distribution $\mathcal{P}_{R_k}$ with mean $\mu_k$. When no confusion can arise, we may use the notation $R_t$ to represent the reward observed in round $t$ without specifying the arm being pulled. For each round, a policy $\pi$ specifies a decision $A_t\in [K]$ based on the history $(A_1, R_1, A_2, R_2,\cdots, A_{t-1}, R_{t-1})$. The objective is to maximize the cumulative reward or minimize the cumulative regret over a total number of round \( T \), defined as:
    \[
    \operatorname{Reg}_T = \sum_{t=1}^T \left( \mu^\star - \mu_{A_t} \right),
    \]
where \( \mu^\star = \max_{k} \mu_k \) is the mean reward of the optimal arm. In particular, we assume that the distribution of each arm is fixed ahead, and there are no ties between arms, meaning that $\mu_i \neq \mu_j$ if $i\neq j$. We also define $k^\star = \arg\max_{k} \mu_k$ as the optimal arm and $\Delta_k=\mu^\star-\mu_k$ as the sub-optimality gap of each arm $k$.

In this paper, we primarily focus on the Gaussian bandit setting, where the distribution $\mathcal{P}_{R_k}$ is Gaussian. Typically, the variance $\sigma_k^2$ is assumed to be known and the classical UCB algorithm \cite{lai1985asymptotically, auer2002finite} can be summarized as follows:
\begin{enumerate}
    \item For $t = 1$ to $K$, pull each arm once.
    \item For $t = K+1$ to $T$, 
    \begin{itemize}
        \item Compute the sample mean $\hat{\mu}_{k, n_{k,t}} = \frac{1}{n_{k,t}}\sum_{s=1}^{n_{k,t}}R_{k,s}$ for each arm.
    \item Pull the arm $A_t = \arg\max_{k}\left\{ \hat{\mu}_{k, n_{k,t}} + \sigma_k\sqrt{\frac{2 \ln t}{n_{k,t}}}\right\}$.
    \end{itemize}
\end{enumerate}

The UCB algorithm balances \textit{exploration} and \textit{exploitation} by iteratively pulling the arm with either a high empirical mean reward or high uncertainty due to large variance resulting from limited samples. 

In practice, the variance of the reward is rarely known a priori. A more realistic approach is to estimate the variance $\sigma_k^2$ with online rewards and adjust the upper confidence bounds by accounting for the uncertainty of estimated variance. When $\mathcal{P}_{R_k}$ is a Gaussian distribution with unknown variance $\sigma_k^2$, the asymptotically optimal algorithm for this setting is proposed in \cite{cowan2018normal}:
\begin{enumerate}
    \item For $t = 1$ to $3K$, pull each arm three times.
    \item For $t = 3K+1$ to $T$, 
    \begin{itemize}
        \item Compute the sample mean $\hat{\mu}_{k, n_{k,t}} = \frac{1}{n_{k,t}}\sum_{s=1}^{n_{k,t}}R_{k,s}$ for each arm. 
   \item Compute the sample variance $\hat\sigma_{R,k,n_{k,t}}^2 = \frac{1}{n_{k,t}-1} \sum_{s = 1}^{n_{k,t}}(R_{k,s} - \hat \mu_{k,n_{k,t}})^2$ for each arm.
    \item Pull the arm $A_t = \arg\max_{k}\left\{ \hat{\mu}_{k, n_{k,t}} + \hat\sigma_{R,k,n_{k,t}}\sqrt{t^{\frac{2}{n_{k,t}-2}}-1}.\right\}$.
    \end{itemize}
\end{enumerate}

As we will show later in Figure \ref{fig: quantile}, the multiplier $\sqrt{ t^{\frac{2}{n_{k,t}-2}}-1}$ is a conservative upper bound on a certain quantile of a t-distribution. We do not present the algorithm in \cite{verma2021stochastic}, which uses the t-distribution directly, since their regret bound is suboptimal and not quite comparable with the existing ones. For the above algorithm, \cite{cowan2018normal} derive the following regret bound for any $T\geq 3K$:
\begin{equation}
\label{eqn: UCB regret}
    \E[\reg] \leq \sum_{k\neq k^\star}\frac{2\Delta_k\ln T}{\ln\lb 1+\frac{\Delta_k^2}{\sigma_{k}^2}\rb} + O((\ln T)^{3/4}\ln\ln T).
\end{equation}
In particular, the leading term of this regret bound matches with the lower bound established in \cite{burnetas1996optimal}, implying that the algorithm is asymptotically optimal for normal bandits with unknown variance. In this paper, we only consider the general case of multi-armed bandits with Gaussian rewards and unknown variance. Therefore, we set this optimal algorithm as the baseline to compare with, and refer to it as the \emph{classical UCB} algorithm thereafter.

\subsection{MAB with surrogate rewards}\label{sec: model}

We formally introduce the setting of MAB with surrogate rewards. In the online phase, if the arm $k$ is pulled for the $s$-th time, the decision maker can observe a surrogate reward $\hat{R}_{k, s}$ alongside a true reward $R_{k, s}$ as in the standard MAB setting. In the offline phase, the decision maker has access to a static pool of surrogate rewards $\{\hat{R}_{k,s}^\off\}_{s=1}^{N_k}$ for arm $k$. We assume that $\hat{R}_{k,1}^\off, \ldots, \hat{R}_{k, N_k}^\off, \hat{R}_{k, 1}, \hat{R}_{k, 2}, \ldots $ are independent and identically distributed (i.i.d.). As discussed in Section \ref{sec:intro}, the surrogate rewards can be generated by ML models. For example, if a feature vector $X_{k, s}^{\off}$ and $X_{k, s}$ is available for each unit during both the offline and online phases, a predictive model $f_k$ can be applied to produce surrogate rewards $\hat{R}_{k, s}^\off = f_k(X_{k, s}^\off)$ and $\hat{R}_{k, s} = f_k(X_{k, s})$. Here, $f_k$ may be trained using historical data or be a pre-trained AI model, and it need not be the same across arms, like the one-size-fits-all surrogate rewards described in Section \ref{sec:intro}. If the feature vectors are i.i.d. across both phases, the surrogate rewards will also be i.i.d. Notably, in most dynamic settings, this assumption becomes less plausible as $N_k$ increases.

For regret analysis, we assume that the true and surrogate rewards are bivariate Gaussian with unknown mean and covariance matrix:
\begin{equation}
\label{eqn: gaussian model}
\begin{pmatrix}
    R_{k,s} \\
    \hat R_{k,s} \\
\end{pmatrix} \sim \mathcal{N}\left(\begin{pmatrix}
    \mu_k \\
    \tilde \mu_k \\
\end{pmatrix}, \begin{pmatrix}
    \sigma_k^2& \rho_k\sigma_k\tilde \sigma_k  \\
     \rho_k\sigma_k\tilde \sigma_k & \tilde\sigma_k^2\\
\end{pmatrix}\right), \quad \forall s=1,2,\cdots.
\end{equation}
Here, $\sigma_k^2$ and $\tilde \sigma_k^2$ are the variances of the true reward and prediction, and $\rho_k$ is the correlation coefficient measuring the quality of the ML model. Let $\Sigma_k$ denote the 2 by 2 covariance matrix in \eqref{eqn: gaussian model}. Importantly, we allow the means $\mu_k$ and $\tilde \mu_k$ to be arbitrarily different. Moreover, unlike \cite{cheung2024leveraging}, we do not even require access to any partial knowledge of the bias $\tilde \mu_k - \mu_k$. Therefore, to effectively leverage the correlation between the true and surrogate rewards, it is crucial to jointly observe both of them in the online phase. This marks a departure from prior work on MAB with offline data \cite{shivaswamy2012multi, banerjee2022artificial, zhang2019warm, cheung2024leveraging}, where auxiliary information is available only during the offline phase. Throughout the paper, we will assume $|\rho_k|< 1$ to avoid degenerate settings.

Although we adopt the Gaussian model~\eqref{eqn: gaussian model} to enable sharp, finite-sample analysis, the resulting algorithms are not restricted to Gaussian data. In particular, many of our estimators and confidence constructions are based on sample averages, which are approximately normal in moderate-to-large samples by the central limit theorem. We empirically validate this robustness under a broad class of non-Gaussian distributions through extensive simulations and real-world case studies in Sections~\ref{sec: simulation} and~\ref{sec: application}. Finally, in Section~\ref{sec: batch reward} we extend the theory to a batched MAB setting and leverage Cram\'er-type self-normalized moderate deviation bounds \cite{petrov2012sums, jing2003self} to obtain Gaussian-calibrated tail control for general distributions.


\section{MLA-UCB: algorithm and regret analysis}
\subsection{Machine learning-assisted mean estimator}
\label{sec: pp mean} 
The core building block of the UCB algorithm is the estimation of the mean reward for each arm. With surrogate rewards, we can apply the debiasing technique to reduce the variance of the sample average. We begin by defining several quantities. For a fixed number $n$ of online observations, define
$$
\E_{k,n}[R] =  \frac{1}{n}\sum_{s=1}^{n}R_{k,s},\quad \E_{k,n}[\hat R] =  \frac{1}{n}\sum_{s=1}^{n}\hat R_{k,s},\quad \E^\off_{k}[\hat R] =  \frac{1}{N_{k}}\sum_{s=1}^{N_{k}}\hat R^{\off}_{k,s},
$$
and 
$$
\hat \sigma_{R,\hat R, k,n} = \frac{1}{n-1}\sum_{s=1}^{n}(R_{k,s}-\E_{k,n}[R])(\hat R_{k,s}-\E_{k,n}[\hat R]),\quad  \hat \sigma_{\hat R, k,n}^2 = \frac{1}{n-1}\sum_{s=1}^{n}(\hat R_{k,s}-\E_{k,n}[\hat R])^2
$$
With the notation, we define the machine learning-assisted mean estimator for the mean reward $\mu_k$ of arm $k$ as 
\begin{equation}
        \label{eqn: pp mean re}
            \mukt = \E_{k,n}[R] - \hat{\lambda}_{k,n}\left(\E_{k,n}[\hat R]  - \E_{k}^\off[\hat R]\right).
\end{equation}
where
\begin{equation}
        \label{eqn: power tuning empirical}
            \hat{\lambda}_{k,n} = \frac{N_k}{n+N_k}\frac{\hat \sigma_{R,\hat R, k,n}}{\hat \sigma_{\hat R, k,n}^2}.
\end{equation} 

The expression of \eqref{eqn: pp mean re} is motivated by the PPI++ method \cite{angelopoulos2023ppi++}, which has also been extensively studied in various fields, including causal inference \citep{robins1994estimation,scharfstein1999adjusting,glynn2010introduction}, semi-supervised inference \citep{zhang2019semi, azriel2022semi}, and control variates \citep{lavenberg1981perspective, nelson1990control}. For any non-random $\hat{\lambda}_{k,n}$, $\mukt$ is an unbiased estimate of $\mu_k$ as surrogate rewards are i.i.d. across both the offline and online phases. When $\hat{\lambda}_{k,n}=1$, the estimator can be written as $(\E_{k,n}[R] - \E_{k,n}[\hat R])  + \E_{k}^\off[\hat R]$, where the last term is the biased offline estimate of $\mu_k$ and the difference of the first two terms can be viewed as a bias estimate. The choice of $\hat{\lambda}_{k,n}$ in \eqref{eqn: power tuning empirical} is the plug-in estimate of the variance minimizer -- it ensures that the $\mukt$ is no worse than the sample mean estimator.

While asymptotic inference of the estimator \eqref{eqn: pp mean re} is well established, it is insufficient for regret analysis in the MAB setting. In particular, UCB algorithms typically employ a decaying sequence of significance levels, or a fixed horizon-dependent level, and the regret analysis involves simultaneous coverage of confidence intervals over rounds, neither of which are challenges that the standard theory is equipped to address. For our purpose, we need a refined analysis of the finite-sample distribution of $\mukt$.

The key challenge lies in the ratio form of $\hat{\lambda}_{k, n}$. To address this, we start by reformulating the estimator in a way that facilitates the analysis of the finite-sample distribution of $\mukt$.

\begin{proposition}
\label{prop: intercept}
    Denote by the pooled sample mean of the online and offline predictions
    $$
    \E_{k,n}^{\all}[\hat R] = \frac{1}{n+N_k}\lb\sum_{s=1}^{n}\hat R_{k,s}+\sum_{s=1}^{N_k}\hat R_{k,s}^\off\rb.
    $$
    Then $\mukt$ is given by the intercept of the following ordinary least squares (OLS) estimator, i.e., 
    \begin{equation}
    \label{eqn: OLS equivalence}
        \mukt = \arg\min_{\mu}\min_{\beta} \sum_{s=1}^{n}\lb R_{k,s} - \mu -\beta \lb\hat R_{k,s} - \E_{k,n}^{\all}[\hat R]\rb\rb^2.
    \end{equation}
\end{proposition}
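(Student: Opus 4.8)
The plan is to compute the closed-form solution of the least-squares problem in \eqref{eqn: OLS equivalence} explicitly and match it term by term with the definition \eqref{eqn: pp mean re}. I would write $n = n_{k,t}$, $N = N_k$ and abbreviate the online reward average $\bar R = \E_{k,t}[R]$, the online prediction average $\bar{\hat R} = \E_{k,t}[\hat R]$, the offline prediction average $\bar{\hat R}^\off = \E_k^\off[\hat R]$, and the pooled prediction average $\bar{\hat R}^{\all} = \E_{k,t}^{\all}[\hat R]$. Introduce the shifted regressor $W_s = \hat R_{k,s} - \bar{\hat R}^{\all}$. Since \eqref{eqn: OLS equivalence} is an unconstrained least-squares fit with an intercept, its minimizer is the textbook pair $\hat\beta = \sum_{s=1}^{n}(W_s - \bar W)(R_{k,s} - \bar R)\big/\sum_{s=1}^{n}(W_s - \bar W)^2$ and $\hat\mu = \bar R - \hat\beta\,\bar W$, with $\bar W = \frac1n\sum_{s=1}^n W_s$.

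The argument then rests on two elementary observations. First, the slope is invariant to a constant shift of the regressor: $W_s - \bar W = \hat R_{k,s} - \bar{\hat R}$, so after cancelling the common factor $1/n$ in numerator and denominator, $\hat\beta = \widehat\cov(R_k,\hat R_k)/\widehat\var(\hat R_k)$. Second, the specific pooled centering is exactly what turns $\bar W$ into the offline--online discrepancy with the correct weight:
\[
\bar W = \bar{\hat R} - \bar{\hat R}^{\all} = \bar{\hat R} - \frac{n\bar{\hat R} + N\bar{\hat R}^\off}{n+N} = \frac{N}{n+N}\bigl(\bar{\hat R} - \bar{\hat R}^\off\bigr).
\]
Plugging both into $\hat\mu = \bar R - \hat\beta\,\bar W$ yields $\bar R - \tfrac{N}{n+N}\tfrac{\widehat\cov(R_k,\hat R_k)}{\widehat\var(\hat R_k)}\bigl(\bar{\hat R} - \bar{\hat R}^\off\bigr)$, which is precisely $\E_{k,t}[R] - \hat\lambda_{k,t}\bigl(\E_{k,t}[\hat R] - \E_k^\off[\hat R]\bigr) = \mukt$ by \eqref{eqn: power tuning empirical}.

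Since the whole proof is a direct computation, I do not anticipate a real obstacle; the only point to flag is the event $\widehat\var(\hat R_k) = 0$, on which the OLS slope (and $\hat\lambda_{k,t}$) is undefined — under \eqref{eqn: gaussian model} with at least two online pulls this has probability zero and can be ignored. The value of recording this as a proposition is that \eqref{eqn: OLS equivalence} re-expresses $\mukt$ as the intercept of a Gaussian linear regression with design fixed once we condition on the prediction values, which is exactly the representation needed to extract the exact finite-sample (pivotal) distribution of $\mukt$ used in the regret analysis.
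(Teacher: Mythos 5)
Your proposal is correct and follows essentially the same route as the paper's proof: both write down the textbook OLS intercept $\hat\mu = \E_{k,t}[R] - \hat\beta\,(\E_{k,t}[\hat R] - \E_{k,t}^{\all}[\hat R])$, use shift-invariance of the slope to identify $\hat\beta$ with $\widehat\cov(R_k,\hat R_k)/\widehat\var(\hat R_k)$, and expand the pooled mean to obtain the factor $N_k/(n_{k,t}+N_k)$. Your remark on the measure-zero event $\widehat\var(\hat R_k)=0$ is a harmless addition the paper omits.
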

Building off the equivalence with an OLS estimator, we can derive the following characterization of the distribution of $\mukt$.

\begin{proposition}
    \label{prop: pp mean distribution}
    Define $\Fkt = \sigma \lb\{\hat{R}_{k,s}^\off\}_{s=1}^{N_k},\{\hat{R}_{k,s}\}_{s=1}^{n}\rb$ as the $\sigma$-field generated by all surrogate rewards for arm $k$ up to round $t$. Then $\mukt$ can be decomposed as 
    \begin{equation}
        \mukt = \mu_k + S_{1,k,n} + S_{2,k,n}, 
    \end{equation}
    where $S_{1,k,n}$ and $S_{2,k,n}$ are independent random variables, with $S_{1,k,n}\in \Fkt$, 
    \begin{equation}
    \label{eqn: pp mean distribution}
    \begin{aligned}
        S_{1,k,n} \sim \mathcal{N}\lb0, \frac{1}{n+N_{k}}\rho_k^2 \sigma_k^2\rb, \quad
        S_{2,k,n}\big|\Fkt \sim \N \lb0, \frac{1}{n} Z_{k,n} (1-\rho_k^2) \sigma_k^2 \rb, 
    \end{aligned}
    \end{equation}
    and $Z_{k,n}\in \Fkt$ is defined as 
    \begin{equation*}
    \label{eqn: Zkt}
        Z_{k,n} = 1 + \frac{n(\E_{k,n} [\hat R] - \E_{k,n}^\all [\hat R])^2}{\sum_{s=1}^{n} (\hat R_{k,s} - \E_{k,n}[\hat R])^2}.
    \end{equation*}
\end{proposition}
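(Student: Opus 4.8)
The plan is to combine the OLS representation of Proposition~\ref{prop: intercept} with the conditional Gaussian structure of the model \eqref{eqn: gaussian model}, exploiting the key fact that the OLS intercept is a \emph{linear} functional of the response vector. Throughout, abbreviate $n=n_{k,t}$, $N=N_k$, and condition on $\Fkt$. Under \eqref{eqn: gaussian model}, setting $\gamma_k=\rho_k\sigma_k/\tilde\sigma_k$ and $\alpha_k=\mu_k-\gamma_k\tilde\mu_k$, we have the exact Gaussian regression identity $R_{k,s}=\alpha_k+\gamma_k\hat R_{k,s}+\varepsilon_{k,s}$, where, conditionally on $\Fkt$, the $\varepsilon_{k,s}$ are i.i.d.\ $\N(0,(1-\rho_k^2)\sigma_k^2)$ and jointly \emph{independent} of $\Fkt$ (the conditional law of $R$ given the predictions depends on the predictions only through the plug-in mean, not otherwise). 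Re-centering by the pooled mean used in \eqref{eqn: OLS equivalence},
\[
R_{k,s}=\big(\alpha_k+\gamma_k\E^{\all}_{k,t}[\hat R]\big)+\gamma_k\big(\hat R_{k,s}-\E^{\all}_{k,t}[\hat R]\big)+\varepsilon_{k,s},
\]
which writes the response as a constant vector, plus a vector proportional to the (mean-centered) regressor in \eqref{eqn: OLS equivalence}, plus noise.

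First I would push this decomposition through the OLS intercept. Since the design in \eqref{eqn: OLS equivalence} has full column rank almost surely once $n_{k,t}\ge 2$ (the predictions are continuous, so $\sum_s(\hat R_{k,s}-\E_{k,t}[\hat R])^2>0$ a.s., which also makes $\Zkt$ well defined), and the intercept is linear in the response, the intercept of the constant vector is that constant, the intercept of the vector in the span of the centered regressor is $0$, and hence
\[
\mukt=\mu_k+\gamma_k\big(\E^{\all}_{k,t}[\hat R]-\tilde\mu_k\big)+\hat\mu_{\varepsilon},
\]
where $\hat\mu_\varepsilon$ is the intercept from regressing $(\varepsilon_{k,s})_{s\le n_{k,t}}$ on the same design. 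I set $S_1:=\gamma_k(\E^{\all}_{k,t}[\hat R]-\tilde\mu_k)$ and $S_2:=\hat\mu_\varepsilon$; clearly $S_1\in\Fkt$. As $\E^{\all}_{k,t}[\hat R]$ is the mean of $n_{k,t}+N_k$ i.i.d.\ $\N(\tilde\mu_k,\tilde\sigma_k^2)$ variables, $S_1\sim\N\!\big(0,\gamma_k^2\tilde\sigma_k^2/(n_{k,t}+N_k)\big)=\N\!\big(0,\rho_k^2\sigma_k^2/(n_{k,t}+N_k)\big)$, as claimed. For $S_2$, writing out the OLS intercept gives $\hat\mu_\varepsilon=\sum_{s=1}^{n_{k,t}}c_{k,s}\varepsilon_{k,s}$ with $c_{k,s}=\tfrac1{n_{k,t}}-\big(\E_{k,t}[\hat R]-\E^{\all}_{k,t}[\hat R]\big)\tfrac{\hat R_{k,s}-\E_{k,t}[\hat R]}{\sum_j(\hat R_{k,j}-\E_{k,t}[\hat R])^2}$. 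Conditionally on $\Fkt$ this is a fixed linear combination of the $\varepsilon_{k,s}$, so it is $\N\!\big(0,(1-\rho_k^2)\sigma_k^2\sum_s c_{k,s}^2\big)$; the cross term in $\sum_s c_{k,s}^2$ vanishes because $\sum_s(\hat R_{k,s}-\E_{k,t}[\hat R])=0$, leaving $\sum_s c_{k,s}^2=\tfrac1{n_{k,t}}\Zkt$ by the definition of $\Zkt$. This is exactly the stated conditional law of $S_2$.

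The delicate step, which I expect to be the main obstacle, is the \emph{unconditional} independence of $S_1$ and $S_2$: this is not immediate since the conditional variance of $S_2$ depends on $\Zkt\in\Fkt$. I would argue that it reduces to $S_1\perp\Zkt$. Indeed, conditioning on $\Fkt$ and using $S_1\in\Fkt$, for bounded $f,g$ one gets $\E[f(S_1)g(S_2)]=\E[f(S_1)\,\psi(\Zkt)]$ where $\psi(z)=\E_{W\sim\N(0,z(1-\rho_k^2)\sigma_k^2/n_{k,t})}[g(W)]$, and $\E[\psi(\Zkt)]=\E[g(S_2)]$; so $S_1\perp S_2$ follows once $S_1\perp\Zkt$. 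To get the latter, note $\E_{k,t}[\hat R]-\E^{\all}_{k,t}[\hat R]=\tfrac{N_k}{n_{k,t}+N_k}\big(\E_{k,t}[\hat R]-\E^{\off}_{k}[\hat R]\big)$, so $\Zkt$ is a function of $D:=\E_{k,t}[\hat R]-\E^{\off}_{k}[\hat R]$ and $\mathrm{SS}:=\sum_s(\hat R_{k,s}-\E_{k,t}[\hat R])^2$. A one-line covariance computation gives $\cov\!\big(\E^{\all}_{k,t}[\hat R],D\big)=\tfrac{\tilde\sigma_k^2}{n_{k,t}+N_k}-\tfrac{\tilde\sigma_k^2}{n_{k,t}+N_k}=0$, hence by joint Gaussianity $\E^{\all}_{k,t}[\hat R]\perp D$; moreover $\mathrm{SS}$ is a function of the online residuals $\hat R_{k,s}-\E_{k,t}[\hat R]$, which are independent of $\E_{k,t}[\hat R]$ and of the offline predictions, hence of $\big(\E^{\all}_{k,t}[\hat R],D\big)$ jointly. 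Therefore $\E^{\all}_{k,t}[\hat R]\perp(D,\mathrm{SS})$, so $S_1\perp\Zkt$, and the decomposition $\mukt=\mu_k+S_1+S_2$ with the stated (conditional) laws and independence is established. The remaining ingredients — the OLS linearity bookkeeping and the $\sum_s c_{k,s}^2=\tfrac1{n}\Zkt$ identity — are routine.
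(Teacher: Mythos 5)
Your proof is correct, and the core of it — writing $R_{k,s}=\alpha_k+\gamma_k\hat R_{k,s}+\varepsilon_{k,s}$ via the conditional Gaussian structure and pushing this through the linear OLS intercept to get $S_1=\gamma_k(\E^{\all}_{k,t}[\hat R]-\tilde\mu_k)$ and $S_2=\sum_s c_{k,s}\varepsilon_{k,s}$ with $\sum_s c_{k,s}^2=\Zkt/n_{k,t}$ — is exactly the paper's decomposition. The one place you genuinely diverge is the independence of $S_1$ and $S_2$: the paper invokes Basu's theorem ($\E^{\all}_{k,t}[\hat R]$ is complete sufficient for $\tilde\mu_k$, while the centered residuals and $\E_{k,t}[\hat R]-\E^{\all}_{k,t}[\hat R]$ are ancillary), whereas you first reduce unconditional independence to $S_1\perp\Zkt$ via the conditional characteristic of $S_2$ given $\Fkt$, and then verify $S_1\perp\Zkt$ by a direct zero-covariance computation under joint Gaussianity. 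Both are valid; your route is more elementary and self-contained, and it isolates precisely the independence needed for this proposition. The trade-off is that Basu's theorem delivers the stronger statement that $\E^{\all}_{k,t}[\hat R]$ is independent of the entire ancillary collection, which the paper reuses downstream (in Corollary \ref{cor: Skt} and in the proof of Proposition \ref{prop: CI mean}, where $S_1$ must also be independent of $\hat\sigma^2_{R,k,t}$); with your approach those later steps would each require their own covariance computations.
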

Proposition \ref{prop: pp mean distribution} demonstrates that the estimation error of $\mukt$ can be decomposed into two components: the first component $S_{1,k,n}$ represents the bias of using the empirical mean of surrogate rewards instead of the true mean in the regressor, and the second component represents the uncertainty of intercept estimation in the linear regression model. 

Despite the clean distributional characterization, it does not directly imply an upper confidence bound on $\mu_k$ since the variance $\sigma_k^2$ and the correlation $\rho_k$ are unknown. While both can be estimated consistently via plug-in methods, as previously noted, standard consistency alone is not enough. Following the logic of Student's t-tests, if we can 
construct an unbiased estimator $\hat{\sigma}_1^2$ of $\rho_k^2 \sigma_k^2$ that is independent of $S_{1,k,n}$ and $k_1\hat{\sigma}_1^2$ is $\chi^2$-distributed with $k_1$ degrees of freedom, and an unbiased estimator $\hat{\sigma}_2^2$ of $(1 - \rho_k^2)\sigma_k^2$ that is independent of $S_{2,k,n}$ and  $k_2\hat{\sigma}_2^2$ is $\chi^2$-distributed with $k_2$ degrees of freedom, then
\[\P\lb -\frac{S_{1,k,n}}{\hat{\sigma}_1} \le q_{k_1}(\delta) \sqrt{\frac{1}{n + N_k}} \rb\ge 1 - \delta, \,\,\text{and} \,\, \P\lb -\frac{S_{2,k,n}}{\hat{\sigma}_2} \le q_{k_2}(\delta) \sqrt{\frac{Z_{k, n}}{n}} \rb\ge 1 - \delta.\]
This can yield an upper confidence bound on $\mu_k$ by \eqref{eqn: pp mean distribution} and a union bound. We prove that the empirical variance of true rewards can be used as a conservative version of $\hat{\sigma}_1$, since $\var(R_k) = \sigma_k^2 \ge \rho_k^2 \sigma_k^2$, and the residual mean square error of the regression \eqref{eqn: OLS equivalence} can be used as $\hat{\sigma}_2^2$, since  $\var(R_k|\hat R_k) = (1-\rho_k^2)\sigma_k^2$. We are unable to find an unbiased estimator of $\rho_k^2 \sigma_k^2$ that is independent of $S_{1,k,n}$, though we show it has a negligible effect when $N_k >\!\!> n$ for suboptimal arms.

\begin{proposition}
    \label{prop: CI mean}
    For $n\geq 3$, let $\hat \beta_{k,n}$ be the optimal $\beta$ in \eqref{eqn: OLS equivalence} and 
    \begin{equation*}
        \label{eqn: def var estimate}
        \begin{aligned}
            &\hat{\sigma}^2_{R,k,n} = \frac{1}{n - 1} \sum_{s=1}^{n}\lb R_{k,s}  - \E_{k,n}[ R]\rb^2,\\
            &\hat{\sigma}^2_{\epsilon,k,n} = \frac{1}{n - 2} \sum_{s=1}^{n}\lb R_{k,s} - \mukt -\hat \beta_{k,n} \lb\hat R_{k,s} - \E_{k,n}^{\all}[\hat R]\rb\rb^2. 
        \end{aligned}
    \end{equation*}
    Then, for any $\delta\in (0,\frac{1}{2})$ we have 
    \begin{equation}
        \label{eqn: pp mean concentration}
        \P\lb \mu_k \leq \mukt + q_{n - 2}(\delta) \lb\sqrt{\frac{ \hat\sigma_{R,k,n}^2}{n+N_k}}+\sqrt{\frac{\Zkt\hat\sigma_{\epsilon,k,n}^2}{n}}\rb\rb \geq 1-2\delta,
    \end{equation}
    where $q_{d}(\delta)$ is the $1-\delta$ quantile of the Student's t-distribution of $d$ degrees of freedom.
\end{proposition}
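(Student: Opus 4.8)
\emph{Proof sketch.} The plan is to feed the exact distributional decomposition of Proposition~\ref{prop: pp mean distribution} into two separate one-sided Studentized tail bounds, one controlling $-S_1$ and one controlling $-S_2$, and then combine them by a union bound. Write $C_1 = q_{n_{k,t}-2}(\delta)\sqrt{\hat\sigma_{R,k,t}^2/(n_{k,t}+N_k)}$ and $C_2 = q_{n_{k,t}-2}(\delta)\sqrt{\Zkt\hat\sigma_{\epsilon,k,t}^2/n_{k,t}}$, so that the claim is equivalent to $\P(\mu_k > \mukt + C_1 + C_2) \le 2\delta$. By Proposition~\ref{prop: pp mean distribution}, $\mu_k - \mukt = -(S_1+S_2)$, and since $-S_1\le C_1$ together with $-S_2\le C_2$ force $-(S_1+S_2)\le C_1+C_2$, we have $\{\mu_k > \mukt + C_1+C_2\}\subseteq\{-S_1 > C_1\}\cup\{-S_2 > C_2\}$. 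Hence it suffices to show $\P(-S_1 > C_1)\le\delta$ and $\P(-S_2 > C_2)\le\delta$. (The nontrivial regime, in which $q_{n_{k,t}-2}(\delta)>0$, is $\delta<1/2$; we also take $n_{k,t}\ge 3$ so the relevant $t$-distributions are defined.)

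For the $S_2$ term I would argue conditionally on $\Fkt$. Since $(R_{k,s},\hat R_{k,s})$ is bivariate Gaussian and i.i.d.\ across $s$ and the offline surrogates are independent of all online variables, conditional on $\Fkt$ the responses $(R_{k,s})_{s\le n_{k,t}}$ follow a Gaussian linear model with i.i.d.\ $\N(0,(1-\rho_k^2)\sigma_k^2)$ errors and the now-fixed regressors $1$ and $\hat R_{k,s}-\E_{k,t}^{\all}[\hat R]$. By Proposition~\ref{prop: intercept}, $\mukt$ is the intercept of the OLS fit in this model and $\hat\sigma_{\epsilon,k,t}^2$ its residual mean square with the usual $n_{k,t}-2$ degrees of freedom. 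Standard Gaussian-linear-model theory then gives, conditionally on $\Fkt$, that $(n_{k,t}-2)\hat\sigma_{\epsilon,k,t}^2/\big((1-\rho_k^2)\sigma_k^2\big)\sim\chi^2_{n_{k,t}-2}$ and is independent of the OLS coefficient vector, hence of $\mukt$, hence of $S_2 = \mukt-\mu_k-S_1$ (as $\mu_k$ is constant and $S_1\in\Fkt$). Combining with $S_2\mid\Fkt\sim\N(0,\Zkt(1-\rho_k^2)\sigma_k^2/n_{k,t})$ from Proposition~\ref{prop: pp mean distribution}, the factors $\Zkt$ and $(1-\rho_k^2)\sigma_k^2$ cancel in the ratio $-S_2/\sqrt{\Zkt\hat\sigma_{\epsilon,k,t}^2/n_{k,t}}$, which is therefore $t_{n_{k,t}-2}$-distributed conditionally on $\Fkt$ and hence also unconditionally; so $\P(-S_2 > C_2) = \delta$.

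For the $S_1$ term the estimator $\hat\sigma_{R,k,t}^2$ is only a \emph{conservative} scale: it is marginally unbiased for $\sigma_k^2\ge\rho_k^2\sigma_k^2=(n_{k,t}+N_k)\var(S_1)$, and no unbiased estimator of $\rho_k^2\sigma_k^2$ independent of $S_1$ appears available. The two facts I would use are: (i) $(n_{k,t}-1)\hat\sigma_{R,k,t}^2/\sigma_k^2\sim\chi^2_{n_{k,t}-1}$, immediate since the $R_{k,s}$ are i.i.d.\ $\N(\mu_k,\sigma_k^2)$; and (ii) $\hat\sigma_{R,k,t}^2\perp S_1$. For (ii), note that $\hat\sigma_{R,k,t}^2$ is a function of the centered vector $(R_{k,s}-\E_{k,t}[R])_{s\le n_{k,t}}$, whereas a short computation from the OLS representation (Proposition~\ref{prop: intercept}) shows $S_1=\beta_k^\star(\E_{k,t}^{\all}[\hat R]-\tilde\mu_k)$ with $\beta_k^\star=\rho_k\sigma_k/\tilde\sigma_k$, so $S_1$ depends on the online data only through $\E_{k,t}[\hat R]$ (equivalently $\sum_s\hat R_{k,s}$) and otherwise only on the offline surrogates; a direct covariance computation gives $\cov\big(R_{k,s}-\E_{k,t}[R],\,\E_{k,t}[\hat R]\big)=0$ for each $s$, so by joint Gaussianity the centered true rewards are independent of $\E_{k,t}[\hat R]$ and of the offline surrogates, hence of $S_1$. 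Using $S_1\sim\N(0,\rho_k^2\sigma_k^2/(n_{k,t}+N_k))$ together with (i) and (ii), the ratio $-S_1/\sqrt{\hat\sigma_{R,k,t}^2/(n_{k,t}+N_k)}$ equals $|\rho_k|$ times a $t_{n_{k,t}-1}$ variable with independent numerator and denominator. Since $|\rho_k|\le 1$, $q_{n_{k,t}-2}(\delta)>0$, and $t$-quantiles are decreasing in the degrees of freedom (so $q_{n_{k,t}-2}(\delta)\ge q_{n_{k,t}-1}(\delta)$), we obtain $\P(-S_1>C_1)\le\P(t_{n_{k,t}-1}>q_{n_{k,t}-2}(\delta))\le\P(t_{n_{k,t}-1}>q_{n_{k,t}-1}(\delta))=\delta$; the case $\rho_k=0$ is trivial since then $S_1\equiv0$.

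Feeding the two bounds into the union bound above yields $\P(\mu_k>\mukt+C_1+C_2)\le2\delta$, which is the assertion. I expect the main obstacle to be the independence claim (ii): the naive statement that the centered online true rewards are independent of the online surrogates is false, and one must exploit that $S_1$ enters the decomposition only through the pooled surrogate \emph{mean} to recover orthogonality; relatedly, one has to accept the conservativeness of bounding $\rho_k^2\sigma_k^2$ by $\sigma_k^2$ and absorb it through $|\rho_k|\le1$ and the degrees-of-freedom comparison $q_{n_{k,t}-1}\le q_{n_{k,t}-2}$. Verifying the Gaussian-linear-model prerequisites for the $S_2$ step, and reading off $S_1=\beta_k^\star(\E_{k,t}^{\all}[\hat R]-\tilde\mu_k)$ from the OLS representation, are routine.
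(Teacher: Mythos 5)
Your proposal is correct and follows essentially the same route as the paper's proof: decompose $\mukt-\mu_k=S_1+S_2$ via Proposition~\ref{prop: pp mean distribution}, studentize $S_2$ conditionally on $\Fkt$ using the exact $\chi^2_{n_{k,t}-2}$ law of the OLS residual variance, bound $S_1$ conservatively via $|\rho_k|\le 1$ and the monotonicity $q_{n_{k,t}-2}(\delta)\ge q_{n_{k,t}-1}(\delta)$, and finish with a union bound. The only (immaterial) difference is that you establish the independence of $S_1$ and $\hat\sigma^2_{R,k,t}$ by a direct zero-covariance computation for the centered rewards against $\E_{k,t}[\hat R]$, whereas the paper invokes Basu's theorem; you also correctly flag that the stated range $\delta\in(\tfrac12,1)$ should be $\delta\in(0,\tfrac12)$ for the quantile comparison to go the right way.
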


\subsection{MLA-UCB algorithm}
\label{sec: alg design}
\begin{algorithm}[ht]
   \caption{Machine Learning-Assisted Upper Confidence Bound (MLA-UCB)}
   \label{alg:PPUCB}
\begin{algorithmic}
    \STATE \textbf{Initialization}: Pull each arm four times.
    \FOR{$t = 4K+1$ to $T$}
   \STATE Compute the machine learning-assisted mean estimator $\mukntk$ for each arm.
   \STATE Compute the variance estimate $Z_{k,n_{k,t}}, \hat\sigma_{R,k,n_{k,t}}^2, \hat\sigma_{\epsilon,k,n_{k,t}}^2$ for each arm.
    \STATE Pull the arm $$A_t = \arg\max_{k}\left\{ \mukntk +q_{n_{k,t} - 2}\lb\frac{1}{2t\sqrt{\ln t}}\rb \lb\sqrt{\frac{ \hat\sigma_{R,k,n_{k,t}}^2}{n_{k,t}+N_k}}+\sqrt{\frac{Z_{k,n_{k,t}}\hat\sigma_{\epsilon,k,n_{k,t}}^2}{n_{k,t}}}\rb\right\}.$$
   \ENDFOR
\end{algorithmic}
\end{algorithm}

As with the standard UCB algorithm, the MLA-UCB algorithm pulls the arm with the largest upper confidence bound defined in \eqref{eqn: pp mean concentration}. To kick off the process with an initial variance estimate, we need to pull each arm four times. After the initial pulls, the significance level is set at $\frac{1}{2t\sqrt{\ln t}}$ at round $t$. The method is summarized in Algorithm \ref{alg:PPUCB}. Note that the algorithm of \cite{verma2021stochastic} can be viewed as a special case of MLA-UCB when $N_k = \infty$, except that it uses a smaller significance level $1/t^2$, which may lead to suboptimal performance.

Compared to the upper confidence bound constructed in \cite{cowan2018normal} for standard normal bandits, we use the exact quantile of t-distribution $q_{n_{k,t} - 2}\lb\frac{1}{2t\sqrt{\ln t}}\rb$ to scale the standard deviation, while they use a scaling parameter of $\sqrt{n_{k,t} (t^{\frac{2}{n_{k,t} - 2}}-1)}$. As we shall see in the following proposition, their scaling parameter can be viewed as an upper bound for the exact quantile. \footnote{Our method requires one extra degree of freedom compared to the method in \cite{cowan2018normal}, since we need to estimate the variance ratio $\hat\lambda_{k,n_{k,t}}$ (defined in \eqref{eqn: power tuning empirical}) in addition to the sample mean. This difference is negligible when $n_{k,t}$ is large. } In Figure \ref{fig: quantile}, we plot the difference between the two choices of scaling parameter. As we shall see, when the number of pulls is relatively small, the quantile bound can be substantially larger than the true quantile. The gap remains sizable even with a large number of pulls.

\begin{proposition}
    \label{prop: quantile bound}
    For any real number $s>1$ and integer $d\geq 2$ we have
    \begin{equation*}
        q_{d}\lb\frac{1}{2s\sqrt{\ln s}}\rb \leq \sqrt{d(s^{\frac{2}{d-1}}-1)}
    \end{equation*}
\end{proposition}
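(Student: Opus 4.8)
The plan is to reduce the claimed inequality $q_d\!\left(\tfrac{1}{2s\sqrt{\log s}}\right) \le \sqrt{d(s^{2/(d-1)}-1)}$ to a tail bound for Student's $t$-distribution. Writing $\delta = \tfrac{1}{2s\sqrt{\log s}}$ and $x = \sqrt{d(s^{2/(d-1)}-1)}$, it suffices to show $\P(T_d > x) \le \delta$ where $T_d$ has the $t$-distribution with $d$ degrees of freedom; then the $(1-\delta)$-quantile $q_d(\delta)$ is at most $x$. So the first step is to find a clean, explicit upper bound on the upper tail $\P(T_d > x)$ that is easy to compare with $\tfrac{1}{2s\sqrt{\log s}}$.

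The natural route is a Chernoff-type or direct density-integration bound. One convenient fact: if $T_d = W/\sqrt{V/d}$ with $W\sim\N(0,1)$ independent of $V\sim\chi^2_d$, then for $x>0$ one has $\P(T_d > x) \le \left(1 + \tfrac{x^2}{d}\right)^{-d/2}$ — this follows because $\{T_d > x\}$ implies $W^2 > \tfrac{x^2}{d}V$, i.e. $W\sqrt{d} > x\sqrt{V}$, and a short computation (e.g. integrating the joint density, or using that $\P(T_d>x)=\tfrac12 I_{d/(d+x^2)}(d/2,1/2)$ and bounding the incomplete beta function) yields the stated power-law bound; in fact $\P(T_d>x)\le \tfrac12(1+x^2/d)^{-(d-1)/2}$ is the sharper version one should aim for, since it matches the exponent $d-1$ appearing on the right-hand side. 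Plugging in $x^2 = d(s^{2/(d-1)}-1)$ gives $1 + x^2/d = s^{2/(d-1)}$, so $(1+x^2/d)^{-(d-1)/2} = s^{-1}$, and the tail bound becomes $\P(T_d > x) \le \tfrac12 s^{-1} \le \tfrac{1}{2s\sqrt{\log s}}$ precisely when $\sqrt{\log s}\le 1$. That last condition fails for large $s$, so the crude bound is not quite enough and must be refined.

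To close the gap for all $s>0$, the plan is to establish the slightly stronger tail estimate $\P(T_d > x) \le \tfrac{1}{2}(1+x^2/d)^{-(d-1)/2}\cdot g(x,d)$ with an extra decaying factor $g$, or alternatively to exploit a dimension-free refinement. A clean option: use the bound $\P(T_d>x) \le \tfrac{1}{2}\left(1+\tfrac{x^2}{d}\right)^{-(d-1)/2}\cdot \tfrac{1}{\sqrt{1+x^2/d}}$ when it holds (valid, e.g., via integration by parts on the $t$-density for $d\ge 2$), which supplies the missing factor $(1+x^2/d)^{-1/2} = s^{-1/(d-1)}$; then one needs $s^{-1}\cdot s^{-1/(d-1)} \le s^{-1}/\sqrt{\log s}$, i.e. $s^{1/(d-1)} \ge \sqrt{\log s}$, which does hold for all $s > 0$ and $d\ge 2$ (for $d=2$ it reads $s\ge\sqrt{\log s}$, true; for larger $d$ one checks $s^{1/(d-1)}\ge \sqrt{\log s}$ by a one-variable calculus argument, noting the worst case is $d$ large where $s^{1/(d-1)}\to 1$ while $\sqrt{\log s}$ can be large — so actually this still needs care).

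The main obstacle is therefore the delicate $s$-versus-$d$ trade-off in the elementary inequality at the end: the power-law tail bound alone gives only a $\tfrac12 s^{-1}$ decay, whereas the target has the extra $1/\sqrt{\log s}$, and recovering it requires squeezing a genuine (not merely asymptotic) improvement out of the $t$-tail for every $d\ge 2$ simultaneously. I expect the resolution to split into two regimes — small $d$ (say $d\le d_0$), handled by a direct sharper tail bound or even explicit computation, and large $d$, where $T_d$ is close to Gaussian and the sub-Gaussian tail $\P(T_d>x)\le e^{-x^2/2}$-type estimate combined with $x^2 = d(s^{2/(d-1)}-1)\approx 2\log s$ gives $e^{-\log s} = s^{-1}$ with room to spare from the higher-order terms in $d(s^{2/(d-1)}-1) - 2\log s \ge 0$. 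Managing the uniformity across this crossover, and choosing the cleanest tail inequality that makes both regimes go through, is where the real work lies; the rest is bookkeeping.
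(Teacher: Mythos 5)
Your reduction to the tail bound $\P(T_d > x) \le \tfrac{1}{2s\sqrt{\log s}}$ with $x = \sqrt{d(s^{2/(d-1)}-1)}$ is the right first step, and you correctly diagnose that the power-law bound $\P(T_d>x)\le \tfrac12(1+x^2/d)^{-(d-1)/2} = \tfrac{1}{2s}$ falls short by a factor of $\sqrt{\log s}$. But neither of your proposed fixes closes the gap: the extra factor $(1+x^2/d)^{-1/2}=s^{-1/(d-1)}$ degenerates to $1$ as $d\to\infty$ (as you yourself observe, $s^{1/(d-1)}\ge\sqrt{\log s}$ fails for large $d$), and the two-regime split is left entirely unexecuted, with no candidate for $d_0$ or for the bounds in either regime. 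As written, the proposal does not constitute a proof.

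The missing idea is that the correct dimension-free refinement factor is $1/x$, not $s^{-1/(d-1)}$, and it comes from Mill's ratio applied conditionally. Writing $T_d = X/\sqrt{Y/d}$ with $X\sim\N(0,1)$ independent of $Y\sim\chi^2_d$, the paper bounds
\begin{equation*}
\P\lb X \ge \sqrt{Y(s^{\frac{2}{d-1}}-1)}\rb \le \frac{1}{\sqrt{2\pi(s^{\frac{2}{d-1}}-1)}}\,\E\!\left[Y^{-\frac12}e^{-\frac12 Y(s^{\frac{2}{d-1}}-1)}\right]
\end{equation*}
using $\P(X>t)\le \frac{1}{\sqrt{2\pi}\,t}e^{-t^2/2}$, then computes the expectation in closed form as $\frac{1}{\sqrt2\, s}\cdot\frac{\Gamma((d-1)/2)}{\Gamma(d/2)}$ and applies $\frac{\Gamma((d-1)/2)}{\Gamma(d/2)}\le\sqrt{2\pi/d}$, arriving at $\P(T_d\ge x)\le \frac{1}{\sqrt{2d}\,s\sqrt{s^{2/(d-1)}-1}}$. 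The final step is the elementary inequality $s^{2/(d-1)}-1\ge \frac{2\log s}{d-1}$ (i.e.\ $e^u-1\ge u$), which gives $\sqrt{2d}\sqrt{s^{2/(d-1)}-1}\ge 2\sqrt{\tfrac{d}{d-1}\log s}\ge 2\sqrt{\log s}$ uniformly over all $d\ge 2$ -- no case split on $d$ is needed. In short: the $1/t$ prefactor in Mill's inequality, carried through the $\chi^2$ expectation, is exactly where the $1/(2\sqrt{\log s})$ comes from; your proposal never identifies this mechanism, so the ``real work'' you defer at the end is in fact the entire proof.
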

\begin{figure}
    \centering
    \includegraphics[width=0.7\linewidth]{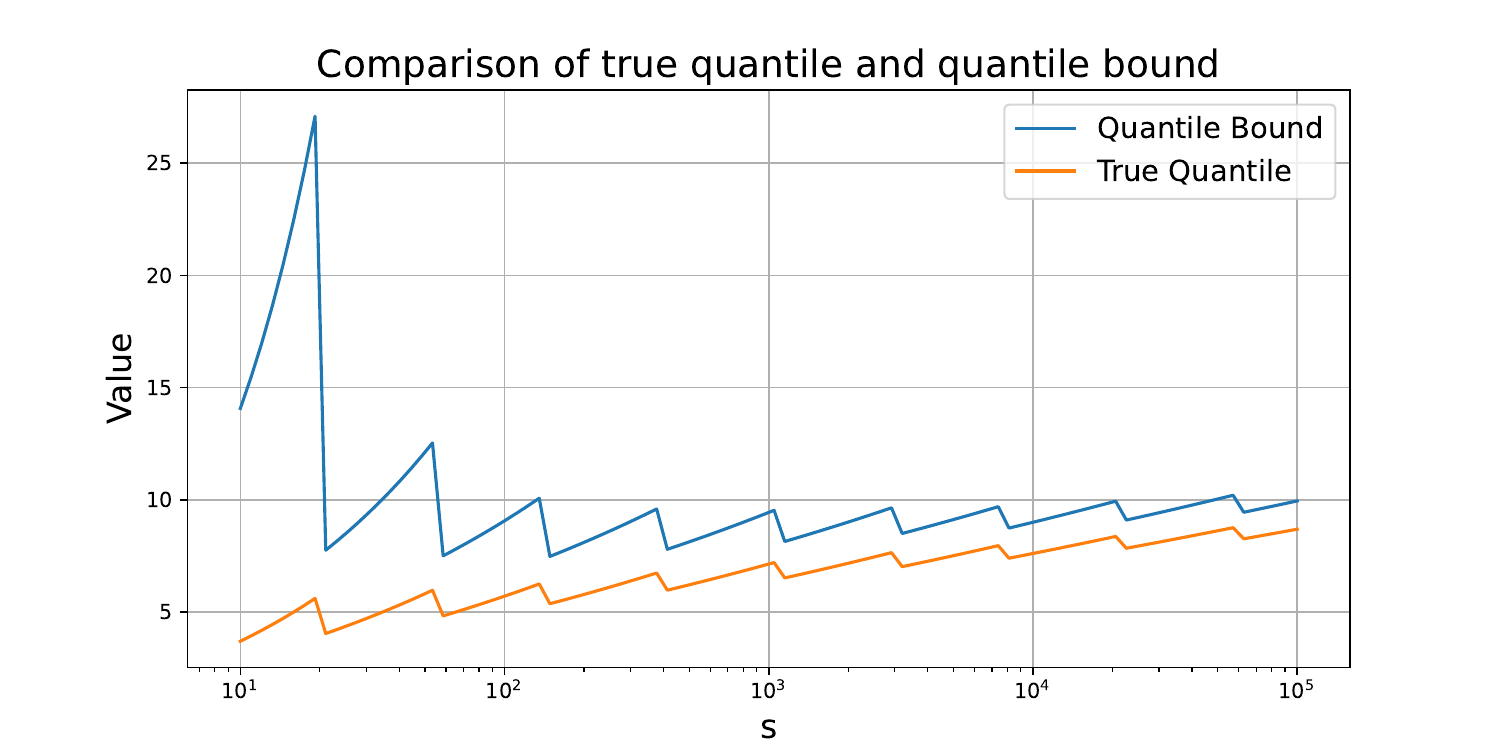}
    \caption{Comparison of the quantile bound $\sqrt{d(s^{\frac{2}{d-1}}-1)}$ and the true quantile $q_{d}\lb\frac{1}{2s\sqrt{\ln s}}\rb$ in Proposition \ref{prop: quantile bound}. We set $d = \lfloor\ln s \rfloor$ and vary the values of $s$ to reflect the order of regret bound \eqref{eqn: UCB regret}. }
    \label{fig: quantile}
\end{figure}

Moreover, by the law of large numbers, the empirical variance should converge to the true variance, i.e., as $n_{k,t}\rightarrow \infty$ we have $\hat\sigma_{R,k,n_{k,t}}^2 \rightarrow \sigma_k^2, \hat\sigma_{\epsilon,k,n_{k,t}}^2 \rightarrow (1-\rho_k^2)\sigma_k^2$, and similarly $Z_{k,n_{k,t}}\rightarrow 1$. Therefore, as long as $N_k \gg n_{k,t}$, the confidence bound in \eqref{eqn: pp mean concentration} will be approximately $\mukt + q_{n_{k,t} - 2}\lb\frac{1}{2t\sqrt{\ln t}}\rb\sqrt{\frac{(1-\rho_k^2)\sigma_{k}^2}{n_{k,t}}}$ when $n_{k,t}$ is large. This demonstrates that the surrogate rewards effectively reduce the variance by $\rho_k^2$.

\subsection{Regret analysis}
\label{sec: Theory known rho}
In this section, we analyze the regret of the MLA-UCB algorithm. Intuitively, reducing the variance of the mean reward estimate results in a sharper regret bound. The formal result is presented in the following theorem. 

\begin{theorem}
\label{thm: regret bound}
    Under the Gaussian reward model \eqref{eqn: gaussian model}, for any $T\geq 4K$, if offline sample size satisfies 
\begin{equation}
    \label{eqn: sample size condition}
    N_k\geq \frac{1}{\delta_k}\lb\frac{2\ln T}{\ln\lb 1+\frac{\Delta_k^2}{24\sigma_{k}^2} \rb}+3\rb, \forall k \neq k^\star,
\end{equation}
for some $\delta_k<1,$ then the expected regret of Algorithm \ref{alg:PPUCB} can be bounded by:
\begin{equation}
\label{eqn: regret bound inf eps}
    \begin{split}
        \E[\reg] \leq \sum_{k\neq k^\star}\frac{2\Delta_k\ln T}{\ln\lb 1+\frac{\Delta_k^2}{\sigma_{k}^2}\frac{1}{(\sqrt{1-\rho_k^2}+\sqrt{\delta_k})^2}\rb}+O\lb(\ln T)^{5/6}\rb.
    \end{split}
\end{equation}
\end{theorem}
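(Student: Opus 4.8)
The plan is to follow the classical template for UCB regret bounds, adapted to the machine-learning-assisted estimator. Since $\E[\reg]=\sum_{k\neq k^\star}\Delta_k\,\E[n_{k,T+1}]$, it suffices to bound the expected number of pulls of each suboptimal arm $k$. I introduce a slack $\eta=\eta_T\downarrow 0$ and a threshold $u_k$, defined via Proposition~\ref{prop: quantile bound} as the smallest integer satisfying $\frac{2\log T}{u_k-3}\le\log\!\lb 1+\frac{(1-\eta)^2\Delta_k^2}{(1+\eta)^2\sigma_k^2(\sqrt{1-\rho_k^2}+\sqrt{\delta_k})^2}\rb$. Then $n_{k,T+1}\le 4+u_k+\sum_{t=4K+1}^{T}\I[A_t=k,\ n_{k,t}\ge u_k]$, and the problem reduces to bounding the expectation of the last sum.

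On $\{A_t=k\}$ the algorithm guarantees $\mukt+c_k(t)\ge\mathrm{UCB}_{k^\star}(t)$, where $c_k(t)$ is the confidence width in Algorithm~\ref{alg:PPUCB}. Hence $\{A_t=k,\ n_{k,t}\ge u_k\}$ lies in the union of three events: (i) $\{\mathrm{UCB}_{k^\star}(t)<\mu^\star\}$, the optimal arm is under-confident; (ii) $\mathcal{B}_k(t)$, at least one of $\hat\sigma^2_{R,k,t},\ \hat\sigma^2_{\epsilon,k,t},\ \Zkt$ is outside a factor $1\pm\eta$ of its limit $\sigma_k^2,\ (1-\rho_k^2)\sigma_k^2,\ 1$; and (iii) $\{\mukt-\mu_k\ge\Delta_k-c_k(t)\}\cap\mathcal{B}_k(t)^c$ with $n_{k,t}\ge u_k$. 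On $\mathcal{B}_k(t)^c$, the sample-size condition \eqref{eqn: sample size condition} --- which forces $\frac{1}{n_{k,t}+N_k}\le\frac{\delta_k}{n_{k,t}}$ whenever $n_{k,t}\le u_k$, the constant $24$ leaving room for the $1\pm\eta$ factors and for a few counts slightly above $u_k$ --- gives $c_k(t)\le(1+\eta)\,q_{n_{k,t}-2}(\delta_t)\sqrt{\sigma_k^2(\sqrt{1-\rho_k^2}+\sqrt{\delta_k})^2/n_{k,t}}$; combined with Proposition~\ref{prop: quantile bound} and the definition of $u_k$, this yields $c_k(t)\le(1-\eta)\Delta_k$ once $n_{k,t}\ge u_k$, so on (iii) the deviation $\mukt-\mu_k$ is at least $\eta\Delta_k>0$. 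It is here that the denominator $\log\!\lb 1+\frac{\Delta_k^2}{\sigma_k^2(\sqrt{1-\rho_k^2}+\sqrt{\delta_k})^2}\rb$ of \eqref{eqn: regret bound inf eps} is produced, in the limit $\eta\to 0$.

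Events (ii) and (iii) are summed by the standard re-indexing trick: since $n_{k,t}$ increases by one exactly when $A_t=k$, for each value $s=n_{k,t}\ge u_k$ there is at most one relevant time $t_s$, and $t_s\le T$ makes both $c_k(t_s)\le q_{s-2}(\delta_T)W_s$ and the estimator from those pulls depend only on the first $s$ pulls of arm $k$. Then $\sum_{s\ge u_k}\P(\mathcal{B}_k(t_s))$ is a geometric-type sum of $\chi^2$- and ratio-of-$\chi^2$- tail bounds, $\sum_{s\ge u_k}e^{-cs\eta^2}=O(\eta^{-2}e^{-cu_k\eta^2})$, while $\sum_{s\ge u_k}\P(\mukt-\mu_k\ge\eta\Delta_k,\ \mathcal{B}_k(t_s)^c)\le\sum_{s\ge u_k}e^{-c's\eta^2\Delta_k^2/\sigma_k^2}$ by the conditional Gaussian tail from Proposition~\ref{prop: pp mean distribution}; both are negligible as long as $\eta_T$ decays slowly enough, which is precisely why the slack cannot be sent to zero arbitrarily fast. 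The one delicate point is that $\hat\sigma^2_{R,k,t}$ is not independent of the component $S_1$ of Proposition~\ref{prop: pp mean distribution}; but $\var(S_1)=\rho_k^2\sigma_k^2/(n_{k,t}+N_k)\le\delta_k\sigma_k^2/n_{k,t}$, so $|S_1|\le\tfrac12\eta\Delta_k$ except with probability $e^{-\Omega(\eta^2 N_k)}$, and the contamination is absorbed --- this is exactly what the lower bound on $N_k$ in \eqref{eqn: sample size condition} is for.

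The main obstacle is event (i): when the optimal arm is under-confident we have $A_t\ne k^\star$, so $n_{k^\star,t}$ does \emph{not} increase, the re-indexing trick does not apply directly, and a naive union over the possible values of $n_{k^\star,t}$ diverges. I would instead partition time into the maximal blocks on which $n_{k^\star,t}$ is constant, of lengths $L_n$ with $\sum_n L_n=T$; since $c_{k^\star}(t)$ is increasing in $t$ through $\delta_t$, the set $\{t\in\text{block }n:\mathrm{UCB}_{k^\star}(t)<\mu^\star\}$ is an initial segment of length at most $(t^\star_n-\sigma_n)^+$, where $\sigma_n\ge n$ starts block $n$ and $t^\star_n$ solves $\delta_{t^\star_n}=\bar F_{n-2}(R_n)$ for the $t$-statistic $R_n$ of the optimal arm after $n$ pulls, whose upper tail is (by Proposition~\ref{prop: CI mean}) at most twice that of Student's $t$ with $n-2$ degrees of freedom. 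Bounding $\E\big[\sum_n(t^\star_n-\sigma_n)^+\big]$ then uses the cap $t^\star_n\le T$, the tail of $R_n$, and a bootstrap over the crude bound $n_{k^\star,T+1}\ge T-O(K\log T)$ (so that almost all blocks have $\sigma_n\approx n$). Finally, choosing $\eta_T=(\log T)^{-1/3}$ balances the $O(\eta_T\log T)$ inflation of each $u_k$ against the requirement that $u_k\eta_T^2$ grow fast enough to kill the concentration sums above (and against the analogous budget in the block analysis of (i)), so that the $\eta_T$-inflated leading term $\sum_k\Delta_k u_k$ differs from the stated leading term of \eqref{eqn: regret bound inf eps} only by $O((\log T)^{2/3})$, which completes the bound.
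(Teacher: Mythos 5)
Your proposal reproduces the architecture of the paper's proof of its intermediate result (Theorem \ref{thm: MLA-UCB}) almost exactly: the reduction to pull counts, the four-way event decomposition (arm $k$ over-estimated; $\hat\sigma^2_{R,k,t}$, $\hat\sigma^2_{\epsilon,k,t}$, $\Zkt$ out of a $1\pm\epsilon$ range; optimal arm under-confident; and the leading term where the width still exceeds $(1-\epsilon)\Delta_k$), the use of the sample-size condition \eqref{eqn: sample size condition} to force $n_{k,t}/(n_{k,t}+N_k)\le\delta_k$ on the relevant event, Proposition \ref{prop: quantile bound} to replace the $t$-quantile by $\sqrt{(n_{k,t}-3)(t^{2/(n_{k,t}-3)}-1)}$, re-indexing the concentration sums by $n_{k,t}$, and the final choice $\epsilon\asymp(\log T)^{-1/3}$ together with an expansion of $\log\lb 1+G\tfrac{(1-\epsilon)^2}{1+\epsilon}\rb$ (the paper's Lemma \ref{lem: log expansion}) to land on \eqref{eqn: regret bound inf eps}. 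Your incidental worry about $\hat\sigma^2_{R,k,t}$ and $S_1$ is moot: the paper shows by Basu's theorem that they are independent, and in any case the over-estimation term is handled by conditioning on $\mathcal{S}_{k,t}$ (Corollary \ref{cor: Skt}) rather than through $\hat\sigma^2_{R,k,t}$.

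The genuine gap is your treatment of event (i). The paper bounds this term in one line, $\sum_{t}\P\lb\mu^\star>\hat\mu_{k^\star,t}+\hat B_{k^\star,t}\rb\le\sum_{t}\tfrac{1}{t\sqrt{\log t}}\le 2\sqrt{\log T}$, by applying Proposition \ref{prop: CI mean} at each $t$ with $\delta=\tfrac{1}{2t\sqrt{\log t}}$; this is precisely where the time-dependent level in Algorithm \ref{alg:PPUCB} earns its keep. Your replacement block argument, as sketched, does not close. The tail of your $t^\star_n$ is $\P(t^\star_n>s)=\P\lb R_n>q_{n-2}\lb\tfrac{1}{2s\sqrt{\log s}}\rb\rb\asymp\tfrac{1}{s\sqrt{\log s}}$, and this is essentially tight because the Student-$t$ tail is polynomial; hence $\E\lb(t^\star_n-\sigma_n)^+\wedge T\rb\asymp\int_{\sigma_n}^{T}\tfrac{ds}{s\sqrt{\log s}}=2\sqrt{\log T}-2\sqrt{\log\sigma_n}$, and summing over up to $n_{k^\star,T+1}$ blocks with $\sigma_n$ as small as $\approx n$ gives a quantity that is polynomial in $T$, not polylogarithmic. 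The bootstrap via $n_{k^\star,T+1}\ge T-O(K\log T)$ cannot rescue this (it only makes $\sigma_n$ smaller, and is itself downstream of the regret bound you are proving), and your quantity $\sum_n(t^\star_n-\sigma_n)^+$ over-counts badly since it charges every time step in the under-confident segment rather than only those at which a given suboptimal arm with $n_{k,t}\ge u_k$ is actually pulled. To complete the proof you should discard the block construction and bound this term as the paper does, via the per-$t$ coverage guarantee of Proposition \ref{prop: CI mean} at level $\tfrac{1}{2t\sqrt{\log t}}$.
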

Compared to the regret bound of the classical UCB algorithm in \eqref{eqn: UCB regret}, the regret bound in \eqref{eqn: regret bound inf eps} has an extra factor of $(\sqrt{1-\rho_k^2} + \sqrt{\delta_k})^2$ multiplied by the reward variance $\sigma_k^2$ on the first term. In the following corollary, we further show that the factor can be reduced to $(1-\rho_k^2)$ as long as $N_k$ is poly-logarithmic in $T$. 
\begin{corollary}
\label{cor: regret bound inf N}
    Under the Gaussian data generation model \eqref{eqn: gaussian model}, if $N_k = \Omega ((\ln T)^{4/3}), \forall k\neq k^\star$, then the expected regret of Algorithm \ref{alg:PPUCB} can be bounded by:
\begin{equation}
\label{eqn: regret bound inf N}
    \begin{split}
        \E[\reg] \leq \sum_{k\neq k^\star}\frac{2\Delta_k\ln T}{\ln\lb 1+\frac{\Delta_k^2}{\sigma_{k}^2(1-\rho_k^2)}\rb}+O\lb(\ln T)^{5/6}\rb.
    \end{split}
\end{equation}
\end{corollary}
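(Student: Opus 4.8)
The plan is to deduce Corollary~\ref{cor: regret bound inf N} from Theorem~\ref{thm: regret bound} by choosing the free parameter $\delta_k$ as small as the sample size condition \eqref{eqn: sample size condition} allows, and then absorbing the residual discrepancy into the lower-order term. Concretely, for each suboptimal arm $k$ set
\[
\delta_k = \frac{1}{N_k}\lb\frac{2\log T}{\log\lb 1+\frac{\Delta_k^2}{24\sigma_k^2}\rb}+3\rb,
\]
so that \eqref{eqn: sample size condition} holds with equality. Since $\Delta_k,\sigma_k$ are fixed instance-dependent constants, the numerator above is $\Theta(\log T)$, and the hypothesis $N_k=\Omega((\log T)^{5/3})$ gives $\delta_k = O(\log T/N_k) = O((\log T)^{-2/3})$, hence $\sqrt{\delta_k} = O((\log T)^{-1/3})$. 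In particular $\delta_k$ lies in the admissible range of Theorem~\ref{thm: regret bound} once $T$ exceeds some instance-dependent $T_0$; for the finitely many $4K\le T< T_0$ the claimed bound \eqref{eqn: regret bound inf N} holds trivially after enlarging the constant hidden in $O((\log T)^{2/3})$, since $\E[\reg]\le T\max_k\Delta_k = O(1)$ there.

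Next I would control the extra multiplicative factor $(\sqrt{1-\rho_k^2}+\sqrt{\delta_k})^2$ that multiplies $\sigma_k^2$ in \eqref{eqn: regret bound inf eps}. Assume without loss of generality $|\rho_k|<1$ for every suboptimal $k$: if $\rho_k=\pm1$ then $1-\rho_k^2=0$, the corresponding summand in \eqref{eqn: regret bound inf N} is $0$, and that arm contributes only $O(1)$ to the regret, which is swallowed by the remainder. Writing $b_k = 1-\rho_k^2>0$, we have $(\sqrt{b_k}+\sqrt{\delta_k})^2 = b_k + 2\sqrt{b_k\delta_k}+\delta_k = b_k\lb 1+O(\sqrt{\delta_k})\rb$, whence
\[
\frac{\Delta_k^2}{\sigma_k^2}\frac{1}{(\sqrt{b_k}+\sqrt{\delta_k})^2} = \frac{a_k}{b_k}\lb 1 - O(\sqrt{\delta_k})\rb, \qquad a_k := \frac{\Delta_k^2}{\sigma_k^2},
\]
with the implied constants depending only on $b_k$ (and hence only on the instance).

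Then I would Taylor-expand the logarithm. Since $a_k/b_k$ is a fixed positive constant and $x\mapsto\log(1+x)$ has a bounded, positive derivative on any compact subinterval of $(0,\infty)$, for $T$ large,
\[
\log\lb 1+\frac{a_k}{b_k} - O(\sqrt{\delta_k})\rb = \log\lb 1+\frac{a_k}{b_k}\rb - O(\sqrt{\delta_k}) = \log\lb 1+\frac{a_k}{b_k}\rb\lb 1 - O(\sqrt{\delta_k})\rb,
\]
using that $L_k := \log(1+a_k/b_k)$ is a positive constant. Inverting and multiplying by $2\Delta_k\log T$ yields
\[
\frac{2\Delta_k\log T}{\log\lb 1+\frac{a_k}{b_k}-O(\sqrt{\delta_k})\rb} = \frac{2\Delta_k\log T}{L_k}\lb 1+O(\sqrt{\delta_k})\rb = \frac{2\Delta_k\log T}{\log\lb 1+\frac{\Delta_k^2}{\sigma_k^2(1-\rho_k^2)}\rb} + O\lb\sqrt{\delta_k}\log T\rb.
\]
Because $\sqrt{\delta_k}\log T = O((\log T)^{2/3})$, summing over the at most $K-1$ suboptimal arms and folding the $O((\log T)^{2/3})$ remainder of \eqref{eqn: regret bound inf eps} into the same order term gives exactly \eqref{eqn: regret bound inf N}.

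As for the main obstacle: there is essentially none of substance — the argument is bookkeeping layered on top of Theorem~\ref{thm: regret bound}. The only points needing care are (i) keeping every $O(\cdot)$ allowed to depend on the fixed instance parameters $\Delta_k,\sigma_k,\rho_k,K$ but not on $T$, consistent with the instance-dependent style of \eqref{eqn: UCB regret}--\eqref{eqn: regret bound inf eps}; (ii) the degenerate case $|\rho_k|=1$, dispatched above; and (iii) verifying the chosen $\delta_k$ is admissible in Theorem~\ref{thm: regret bound} for all $T\ge 4K$, which follows for large $T$ and is vacuous for the finitely many small values of $T$.
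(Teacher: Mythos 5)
Your argument is essentially the paper's: both proofs take $\delta_k=O((\log T)^{-2/3})$ from the sample-size condition and show that replacing $(\sqrt{1-\rho_k^2}+\sqrt{\delta_k})^2$ by $1-\rho_k^2$ perturbs the leading term by $O(\sqrt{\delta_k}\,\log T)=O((\log T)^{2/3})$ — the paper does this via the concavity bound $1/\log a_k\le 1/\log b_k+(b_k-a_k)/(a_k\log(a_k)^2)$ rather than your first-order Taylor expansion, but the bookkeeping is the same and your version is correct for $|\rho_k|<1$. The one place you go beyond the paper is the degenerate case $|\rho_k|=1$, and there your dispatch does not work: Theorem \ref{thm: regret bound} with $\rho_k^2=1$ and $\delta_k=\Theta((\log T)^{-2/3})$ gives a leading term of order $\log T/\log\log T$, not $O(1)$, so the claim that such an arm ``contributes only $O(1)$'' is unjustified (the paper's own proof silently assumes $1-\rho_k^2>0$, since it divides by $(1-\rho_k^2)^2$, so this corner case is a gap in both arguments rather than a defect unique to yours).
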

We highlight a few implications of Corollary \ref{cor: regret bound inf N}:
\begin{enumerate}
    \item If $\rho_k \rightarrow 1, \forall k=1,\cdots, K$, the surrogate reward becomes nearly an affine transformation of the true reward, yielding effectively  $N_k$ additional reward observations for each arm. As $N_k$ grows, the leading term in the regret vanishes, even though the MLA-UCB algorithm is agnostic to the high quality of the surrogates.
    \item If $\rho_k = 0, \forall k=1,\cdots, K$, the predictions are independent of the true rewards under the Gaussian model \eqref{eqn: gaussian model} and hence provide no information. In this scenario, the regret bound \eqref{eqn: regret bound inf N} of MLA-UCB matches the leading term of the regret bound \eqref{eqn: UCB regret}, which is known to be asymptotically optimal \citep{lai1985asymptotically}. 
    \item In the general case, if $\rho_k$ are neither all 1s nor all 0s, the predictions are informative but are not perfect to infer the true reward. In this case, the regret of our MLA-UCB algorithm interpolates the two extreme cases, without prior knowledge of the bias or the correlation. 
\end{enumerate}

Next, we will establish a regret lower bound to show that our MLA-UCB algorithm achieves the optimal performance. Following the standard settings in the MAB literature \citep{lai1985asymptotically, burnetas1996optimal}, we consider the class of uniformly good policies as follows. Here, we will use $\E_{\theta}^\pi$ and $\P_{\theta}^\pi$ to denote the expectation and probability under a specific parameter and policy.
\begin{definition}[Uniformly good policies]
\label{def: uniformly good policy}
A policy $\pi$ is called \emph{uniformly good} if for every instance $\theta:=(\{\mu_k,\tilde\mu_k,\Sigma_k\}_{k=1}^K)$ of \eqref{eqn: gaussian model} and every $a>0$
\[
\mathbb E^\pi_\theta[\mathrm{Reg}_T] = o(T^a),\quad T\to\infty.
\]
\end{definition}
The reason why we restrict our attention to uniformly good policies is to eliminate trivial policies that perform well only on a subset of instances.  For example, the policy that always pulls the first arm has 0 regret on instances with $k^\star = 1$, but it has $O(T)$ regret on instances with $k^\star \neq 1$. Apparently, the condition can be easily satisfied by the classical UCB algorithm and our MLA-UCB algorithm. Under this definition, we can derive the following regret lower bound for all uniformly good policies.
\begin{theorem}
\label{thm: lower bound}
    Under the Gaussian data generation model \eqref{eqn: gaussian model}, let $\pi$ be any uniformly good policy as defined in Definition \ref{def: uniformly good policy}, then 
    \begin{equation}
    \label{eqn: lower bound}
\liminf_{T\to\infty}\frac{\mathbb E_\theta^\pi[\mathrm{Reg}_T]}{\ln T}
\ge
\sum_{k\ne k^\star}
\frac{2\Delta_k}{\ln\lb1+\dfrac{\Delta_k^2}{\sigma_k^2(1-\rho_k^2)}\rb}.
    \end{equation}
\end{theorem}
Here, we can find that the lower bound in \eqref{eqn: lower bound} matches the leading term of the upper bound \eqref{eqn: regret bound inf N}, which implies that our MLA-UCB algorithm achieves optimal regret as long as $N_k=\Omega((\ln T)^{4/3})$.   
\section{Extension: Batched reward with non-Gaussian distribution}
\label{sec: batch reward}
So far, we have derived the asymptotic optimal algorithm under a Gaussian model. In this section, we further extend our MLA-UCB algorithm beyond the Gaussian model in a batched reward MAB setting, where a decision is assigned to a batch of individuals in each round. This corresponds to the practical setting where decision-making is constrained by both implementation logistics and legal or regulatory requirements, making it infeasible to pull arms at the level of individual units. \footnote{The batched-reward model in this section should not be confused with the classical batched bandit setting \cite{perchet2016batched} that limits the number of policy updates. Here, the policy is still updated after every decision epoch using all past data; the only change is that each chosen arm is deployed to a batch of $m$ i.i.d. units, so each pull yields a batch of observations rather than a single reward pair.}  For instance, online retailers may be restricted from updating prices too frequently due to customer experience concerns, technical limitations, or fair pricing regulations. In this scenario, our estimator remains the same form as in Section \ref{sec: pp mean}, but the analysis can no longer rely on exact Gaussian/$t$-tail calculations. Instead, we control the relevant confidence events using self-normalized moderate deviation bounds for batched averages \citep{petrov2012sums, jing2003self}, which yields a computable confidence bound and a regret guarantee whose leading term preserves the same variance-reduction effect, an improvement proportional to $(1-\rho_k^2)$ in the leading term. 

\subsection{Basic settings of batched reward}
We first introduce the formal setting of MAB with batched reward. Fix a batch size $m\ge 1$. The online interaction proceeds over $T$ rounds. At each round $t\in\{1,\dots,T\}$, the policy selects an arm $A_t\in[K]$ based on the history up to $t-1$. We denote by $n_{k, t}$ the total number of batched pulls for arm $k$ right before any arm is pulled at round $t$. If $A_t=k$, then arm $k$ is deployed to $m$ independent units in that round and we observe a new batch of $m$ i.i.d. pairs of true and surrogate rewards
\[
\{(R_{k,mn_{k,t}+j},\hat R_{k,mn_{k,t}+j})\}_{j=1}^m \stackrel{\text{i.i.d.}}{\sim} \mathcal{P}_k.
\]
Then the total number of
online individual observations of arm $k$ available at round $t$ is $m n_{k,t}$, i.e.,
$\{(R_{k,s},\hat R_{k,s})\}_{s=1}^{m n_{k,t}}$ are observed. In addition to the online samples, we assume the decision maker has access to a static pool of $mN_k$ surrogate rewards $\{\hat{R}_{k,s}^\off\}_{s=1}^{mN_k}$ for arm $k$, and the sample size $mN_k$ is assumed to be a multiplier of $m$ for simplicity. We assume that $\hat{R}_{k,1}^\off, \ldots, \hat{R}_{k, mN_k}^\off, \hat{R}_{k, 1}, \hat{R}_{k, 2}, \ldots $ are independent and identically distributed (i.i.d.).
Accordingly, the cumulative regret over $T$ decision rounds is
\[
\mathrm{Reg}_T=\sum_{t=1}^T m(\mu^\star-\mu_{A_t}),
\]
and we will often report $\mathrm{Reg}_T/m$ as the batch average regret.

We will make the following assumption on the data-generating model.
\begin{assumption}
\label{asm: batch reward}
    We assume that the observation $(R_{k,s},\hat R_{k,s})$ is i.i.d. drawn from the distribution $\mathcal{P}_k$ that satisfies the following conditions:
    \begin{enumerate}
        \item The mean $\mu_k:=\E[R_{k,s}],\tilde \mu_k:=\E[\hat R_{k,s}]$, and the variance $ \sigma_k^2:=\var(R_{k,s}), \tilde \sigma_k^2:=\var(\hat R_{k,s})$, exist and are finite. 
        \item Consider the linear projection of $R_{k,s}$ onto $\hat R_{k,s}$,
        define the correlation $\rho_k=\frac{\cov(R_k,\hat R_k)}{\sigma_k\tilde\sigma_k},$ the slope $\beta_k=\rho_k\frac{\sigma_k}{\tilde\sigma_k}$, and the residual after linear projection
        \begin{equation}
            \label{eqn: def residual}
            \epsilon_{k,s} = (R_{k,s}-\mu_k)-\beta_k(\hat R_{k,s}-\tilde\mu_k).
        \end{equation}
        Then we assume the skewness ratios
\begin{equation}\label{eq:ell_R}
\ell_{R_k}:=\frac{\E[|R_{k,s}-\mu_k|^3]}{\sigma_k^3}, \ell_{\hat R_k}:=\frac{\E[|\hat R_{k,s}-\tilde\mu_k|^3]}{\tilde\sigma_k^3}, \ell_{\epsilon_k}:=\frac{\E[|\epsilon_{k,s}|^3]}{(1-\rho_k^2)^{3/2}\sigma_k^3}
\end{equation}
are finite and uniformly bounded by some constant $C>0$, and $R_{k,s}, \hat R_{k,s}, \epsilon_{k,s}$ are sub-Gaussian variables such that for every arm $k\in[K]$,
\begin{equation*}
\left\|\frac{R_{k,s}-\mu_k}{\sigma_k}\right\|_{\psi_2}\le L,
\qquad
\left\|\frac{\hat R_{k,s}-\tilde\mu_k}{\tilde\sigma_k}\right\|_{\psi_2}\le L,
\qquad
\left\|\frac{\epsilon_{k,s}}{\sqrt{1-\rho_k^2}\,\sigma_k}\right\|_{\psi_2}\le L
\end{equation*}
for some constant $L>0$, where $\|\cdot\|_{\psi_2}$ is the sub-Gaussian norm. 
    \end{enumerate}
\end{assumption}
Assumption \ref{asm: batch reward} requires sub-Gaussianity on $R_{k,s}, \hat R_{k,s}, \epsilon_{k,s}$ to control the tail behavior. Importantly, we do not know the variances, skewness ratios, or sub-Gaussian norm -- this distinguishes our method from standard UCB algorithms that exploit the sub-Gaussian norm. Notably, the linear projection implies that the residuals $\epsilon_{k,s}$ are i.i.d. with $\E[\epsilon_{k,s}]=0, \var(\epsilon_{k,s}) = (1-\rho_k^2)\sigma_k^2$.

Furthermore, we make the following assumption on the model parameters.
\begin{assumption}
\label{asm: m and T}
    Define $\sigma_{\max} = \max_{k}\sigma_k,$ $ \sigma_{\min} = \min_{k}\sigma_k$, we assume that:
    \begin{enumerate}[(a)]
        \item $m\geq \max\left\{8(\ln T)^4, 8(\ln T)^3 \max\left\{{\ell_{R_k}^2},{\ell_{\hat R_k}^2},{\ell_{\epsilon_k}^2}\right\}, 4\max\left\{{\ell_{R_k}^{-1}},{\ell_{\hat R_k}^{-1}},{\ell_{\epsilon_k}^{-1}}\right\}\right\}$
        \item $\frac{\sigma_k^2}{m\Delta_k^2}=\Omega(1), \forall k\neq k^\star$,  $\frac{\sigma_{\max}}{\sigma_{\min}} = O(1)$.
    \end{enumerate}
\end{assumption}

Here, Assumption~2(a) guarantees that the batch size $m$ is sufficiently large for the self-normalized moderate deviation bound to deliver the required tail control for the machine-learning-assisted mean estimator. Assumption~2(b) rules out a degenerate ``easy'' regime and ensures the batched reward doesn't make the MAB problem trivial: since $\sigma_k^2/m$ is the variance of the batch-average reward and $\Delta_k^2$ quantifies the signal strength, a vanishing ratio $\sigma_k^2/(m\Delta_k^2)$ would imply that the optimal arm can be identified with overwhelming probability from the initialization batches alone. 
Moreover, Assumption~2(b) aligns with the local asymptotic regime in classical asymptotic statistics \citep{van2000asymptotic, hirano2009asymptotics}, where one studies alternatives separated by $\Delta \asymp h/\sqrt{n}$ for a fixed constant $h$ and sample size $n$. This regime is non-trivial in the sense that separations of larger order enable trivial identification, whereas separations of smaller order make the identification impossible. The second condition in Assumption ~2(b) ensures that all the arms are in the same regime for technical simplicity.

\subsection{MLA-UCB algorithm for MAB with batched reward}
Similar to the Gaussian setting, the machine learning assisted mean estimator can be used to improve the mean estimation and identify the best arms. For round $t$ and a fixed number $n$ of online reward batches, define the empirical means of true reward and surrogate reward by
\[
\E_{k,n}[R] := \frac{1}{m n}\sum_{s=1}^{m n} R_{k,s},
\quad
\E_{k,n}[\hat R] := \frac{1}{m n}\sum_{s=1}^{m n} \hat R_{k,s}.
\]

\[
\E_k^{\off}[\hat R] := \frac{1}{mN_k}\sum_{s=1}^{mN_k}\hat R^{\off}_{k,s},
\quad
\E^{\mathrm{all}}_{k,n}[\hat R]
:=\frac{1}{m(n+N_k)}
\left(\sum_{s=1}^{mn}\hat R_{k,s}+\sum_{s=1}^{mN_k}\hat R^{\off}_{k,s}\right).
\]
Let the online sample covariance and variance be
\begin{align*}
    \hat \sigma_{R,\hat R, k,n}
:=& \frac{1}{mn}\sum_{s=1}^{m n}
(R_{k,s}-\E_{k,n}[R])(\hat R_{k,s}-\E_{k,n}[\hat R]),\\
\hat \sigma_{\hat R, k,n}^2
:=& \frac{1}{mn}\sum_{s=1}^{m n}
(\hat R_{k,s}-\E_{k,n}[\hat R])^2.
\end{align*}
Similar to \eqref{eqn: pp mean re}, we  can define the machine learning assisted mean estimator as
\[
\hat\mu^{\mla}_{k,n}
:=\E_{k,n}[R]-\hat\lambda_{k,n}\big(\E_{k,n}[\hat R]-\E_{k}^{\off}[\hat R]\big),
\quad
\hat\lambda_{k,n}
:=\frac{N_k}{n+N_k}\cdot
\frac{\hat \sigma_{R,\hat R, k,n}}{\hat \sigma_{\hat R, k,n}^2}.
\]
Equivalently, letting $
\hat\beta_{k,n}:=\frac{\hat \sigma_{R,\hat R, k,n}}{\hat \sigma_{\hat R, k,n}^2}
$
then Proposition \ref{prop: intercept} implies that $\hat\mu^{\mathrm{MLA}}_{k,n}$ and $\hat\beta_{k,n}$ are the intercept and slope in the OLS regression of $R_{k,s}$ on
$\hat R_{k,s}-\E^{\mathrm{all}}_{k,n}[\hat R]$ using $s=1,\dots,m n$.
Define the empirical variance of rewards by
\[
\hat\sigma^2_{R,k,n}
:=\frac{1}{mn}\sum_{s=1}^{m n}\big(R_{k,s}-\E_{k,n}[R]\big)^2,
\]
and define the residual variance estimator by \footnote{The scaling parameter becomes $mn$ for technical simplicity, since we don't have a clean $\chi^2$ argument without Gaussian assumption.}
\[
\hat\sigma^2_{\epsilon,k,n}
:=\frac{1}{mn}\sum_{s=1}^{m n}
\Big(R_{k,s}-\hat\mu^{\mla}_{k,n}-\hat\beta_{k,n}\big(\hat R_{k,s}-\E^{\all}_{k,n}[\hat R]\big)\Big)^2.
\]
Again, we will use these empirical variance estimates to construct the confidence bound. 

The algorithm design in the batch setting is shown in Algorithm \ref{alg:PPUCB batch}. Compared to Algorithm \ref{alg:PPUCB}, Algorithm \ref{alg:PPUCB batch} uses the standard $\sqrt{2\ln t}$ instead of the quantile of Student's t-distribution, because batched data allows us to approximate the tail probability using a Gaussian distribution directly. Moreover, we introduce an additional term $(1+m^{-1/3})$ for the ease of regret analysis, which should be negligible with a relatively large batch size $m$. 

\begin{algorithm}[ht]
   \caption{MLA-UCB for MAB with batched rewards}
   \label{alg:PPUCB batch}
\begin{algorithmic}
    \STATE \textbf{Initialization}: Pull each arm once.
    \FOR{$t = K+1$ to $T$}
   \STATE Compute the machine learning-assisted mean estimator $\mukntk$ for each arm.
   \STATE Compute the variance estimate $\hat\sigma_{R,k,n_{k,t}}^2, \hat\sigma_{\epsilon,k,n_{k,t}}^2$ for each arm.
    \STATE Pull the arm $$A_t = \arg\max_{k}\left\{ \mukt + \lb1+{m^{-\frac{1}{3}}}\rb\lb\sqrt{\frac{ 2\hat\sigma_{R,k,n_{k,t}}^2\ln t}{m(n_{k,t}+N_k)}}+\sqrt{\frac{2\hat\sigma_{\epsilon,k,n_{k,t}}^2\ln t}{mn_{k,t}}}\rb\right\}.$$
   \ENDFOR
\end{algorithmic}
\end{algorithm}
\subsection{Regret analysis}
The core difficulty of handling general distribution is to construct upper confidence bound for $\mu_k$ using empirical mean and variance estimators. In the Gaussian model, we can utilize the Student's t-distribution to construct confidence bound, which is difficult to generalize beyond Gaussian model. Classical concentration inequality, such as empirical Bernstein inequality, would require boundedness of $R_{k,s},\hat R_{k,s},  \epsilon_{k,s}$ in our framework. Although the range of $R_{k,s},\hat R_{k,s}$ can often be determined in prior, the range of $\epsilon_{k,s}$ is usually unknown as it cannot be directly observed in practice. Fortunately, when the data comes in batches, the distribution of batch-average reward is approximately Gaussian, and the following self-normalized Cramér-type large deviations bound \citep{petrov2012sums, jing2003self} allows us to control the tail behavior of machine learning-assisted mean estimator for general distributions. 
\begin{lemma}[Theorem 2.3 of \cite{jing2003self}]
\label{lem: self normalize}
Let $X_{1}, X_{2}, \dots$ be independent random variables with $E X_{i} = 0$ and $0 < E X_{i}^{2} < \infty$. Set $$S_{n} = \sum_{i=1}^{n} X_{i},\quad  B_{n}^{2} = \sum_{i=1}^{n} E X_{i}^{2}, \quad V_{n}^{2} = \sum_{i=1}^{n} X_{i}^{2}, \quad L_{n} = \sum_{i=1}^{n} E|X_{i}|^{3}, \quad d_{n} = \frac{B_n}{L_{n}^{1/3}}.$$
Then, for $0 \le x \le d_{n}$,
\begin{equation}
\frac{P(S_{n}/V_{n} \ge x)}{1-\Phi(x)} = 1 + O(1) \left(\frac{1+x}{d_{n}}\right)^{3},\quad \frac{P(S_{n}/V_{n} \le -x)}{1-\Phi(x)} = 1 + O(1) \left(\frac{1+x}{d_{n}}\right)^{3}
\end{equation}
where $O(1)$ is bounded by an absolute constant. 
\end{lemma}

Using the result above, we shall derive the following regret upper bound. 
\begin{theorem}
\label{thm: regret bound non Gaussian}
    Under Assumption \ref{asm: batch reward} and \ref{asm: m and T}, if $T\geq \max\{K+1, 5\}$, and the offline sample size $N_k = \Omega (\frac{\sigma_k^2}{m\Delta_k^2}(\ln T)^{5/3}), \forall k\neq k^\star$, then the expected regret of Algorithm \ref{alg:PPUCB batch} can be bounded by:
    \begin{equation}
        \frac{1}{m}\E[\reg]\leq \sum_{k\neq k^\star}\lb \frac{2(1-\rho_k^2)\sigma_k^2\ln T}{m\Delta_k^2} +\frac{C\sigma_k^2(\ln T)^{2/3}}{m\Delta_k^2}\rb\Delta_k,
    \end{equation}
    where $C>0$ is a constant independent of $\mu_k, \sigma_k^2, \tilde\mu_k, \tilde\sigma_k^2, T$.
\end{theorem}
Notably, in our setting, consider an alternative algorithm that runs the bandit without collecting batches, and assume that the reward $R_k$ is from Gaussian distribution with known $\sigma_k^2$, then the standard regret bound of classical UCB algorithm is $\frac{1}{m}\E[\reg]=\sum_{k\neq k^\star}\frac{2\sigma_k^2\ln (mT)}{m\Delta_k}+o(\frac{\sigma_k^2\ln (mT)}{m\Delta_k})$. Therefore, our algorithm reduce the variance by $(1-\rho_k^2)$ in the leading term. One may notice the batched algorithm is in the order of $\ln T$, which is smaller than the non-batched algorithm's order $\ln (mT)$. This is because our assumption requires that the sub-optimality gap $\Delta_k$ is small enough such that the regret of collecting initial batch $m\Delta_k$ is negligible compared with the leading term $\frac{2(1-\rho_k^2)\sigma_k^2\ln T}{\Delta_k}$, and batching may have slight gain in this scenario in terms of the upper bound. The regular bandit setting is still favorable in more general scenario when $\Delta_k$ is not small.
\section{Numerical simulations}
\label{sec: simulation}
In this section, we conduct numerical simulations to evaluate the performance of our MLA-UCB algorithm and compare it with the standard UCB algorithm without surrogate rewards. In Section \ref{sec: simulation Gaussian}, we directly simulate surrogate rewards from the Gaussian model \eqref{eqn: gaussian model}. In Section \ref{sec: ML simulation}, we simulate individual feature vectors and apply actual ML algorithms to produce surrogate rewards. While these surrogate rewards are typically non-Gaussian, meaning that our theory does not strictly apply, we nonetheless observe a sizable reduction in regret. 

\subsection{Non-ML generated Gaussian surrogate rewards}
\label{sec: simulation Gaussian}
We simulate a multi-arm bandit model with $K=5$ arms and a total number of rounds $T=1000$. The true and surrogate rewards from a bivariate Gaussian distribution \eqref{eqn: gaussian model} with the mean of true rewards $[\Delta, 0, 0, 0, 0]$ for some $\Delta > 0$ and the mean of surrogate rewards $[0,0.25,0.5,0.75,1]$. For simplicity, we set the correlation $\rho_k=\rho$ and offline sample size $N_k=N$ to be equal across arms. We vary the values of $\rho, N, \Delta$ in different ranges and compare the cumulative regret of MLA-UCB to the UCB algorithm \cite{cowan2018normal}. The experimental results are shown in Figure \ref{fig: MLA-UCB}.
\begin{figure}[ht]
    \centering
    \begin{subfigure}[t]{0.42\linewidth}
        \includegraphics[width=\linewidth]{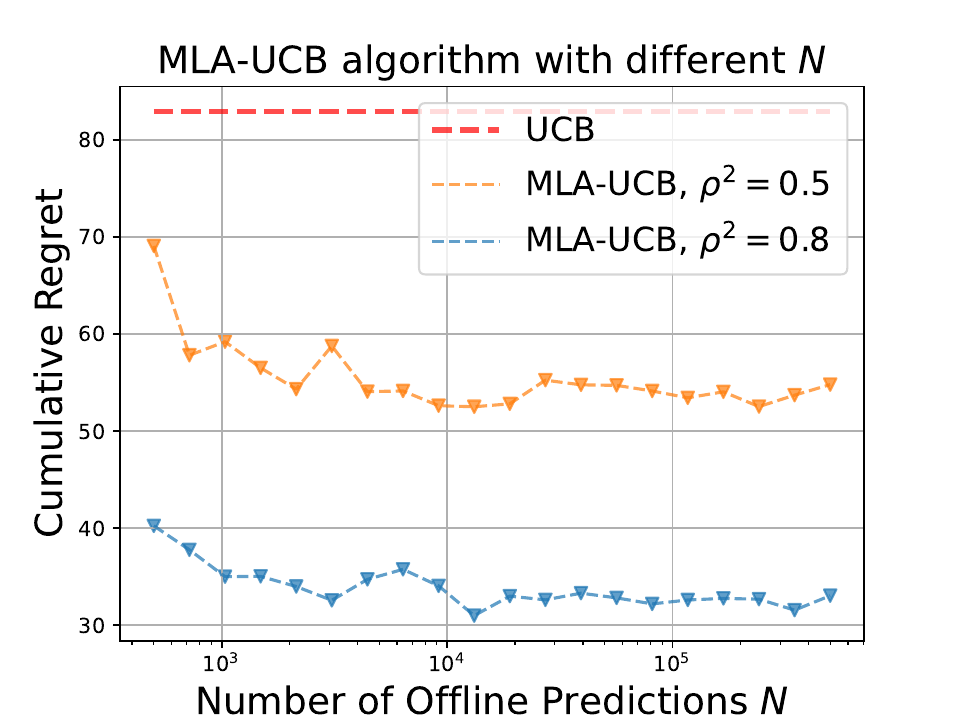}
        \caption{Cumulative regret under different $N$.}
        \label{fig: MLA-UCB N}
    \end{subfigure}
    \begin{subfigure}[t]{0.42\linewidth}
        \includegraphics[width=\linewidth]{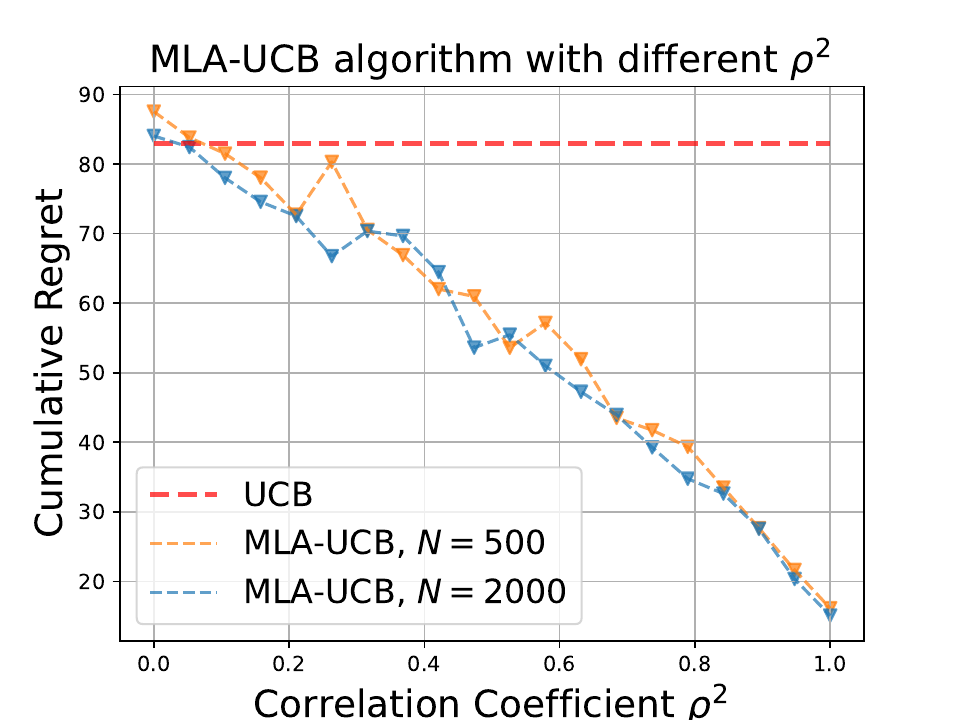}
        \caption{Cumulative regret under different $\rho^2$.}
        \label{fig: MLA-UCB rho}
    \end{subfigure}
    \begin{subfigure}[t]{0.42\linewidth}
        \includegraphics[width=\linewidth]{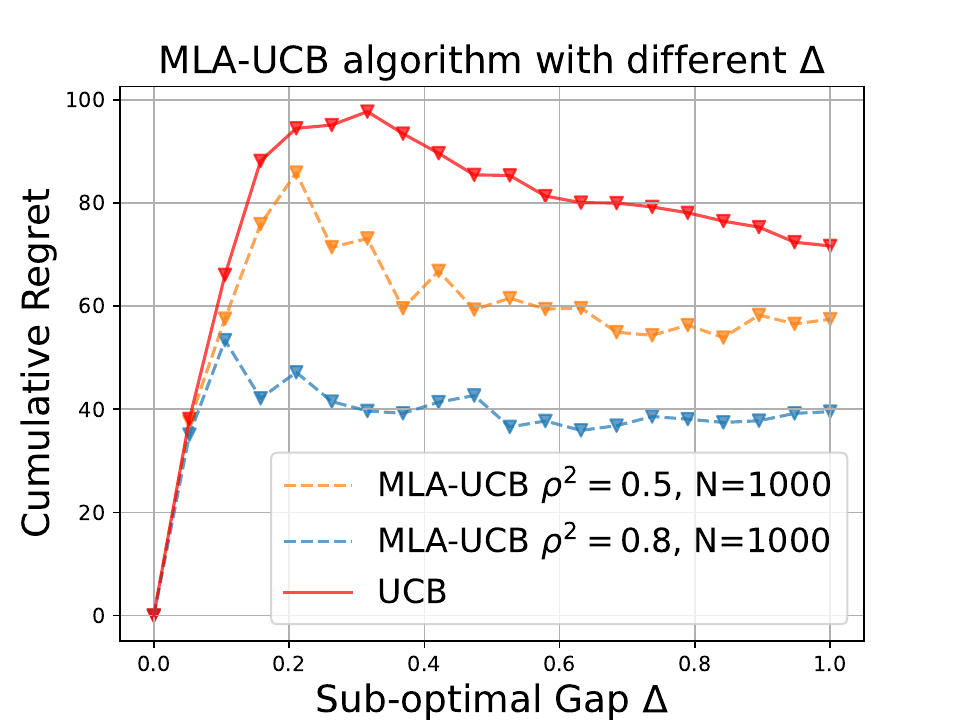}
        \caption{Cumulative regret under different $\Delta$.}
        \label{fig: MLA-UCB delta}
    \end{subfigure}
    \begin{subfigure}[t]{0.42\linewidth}
        \includegraphics[width=\linewidth]{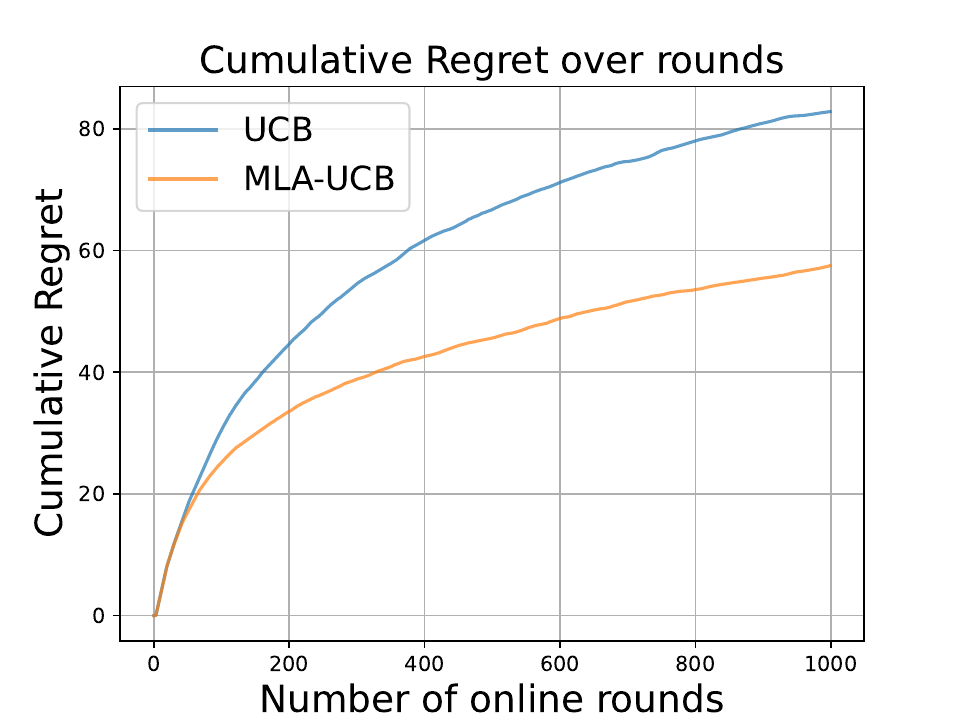}
        \caption{Cumulative regret over rounds.}
        \label{fig: MLA-UCB T}
    \end{subfigure}
    \caption{\textbf{Cumulative regret of the MLA-UCB algorithm (Algorithm \ref{alg:PPUCB}) and the classical UCB algorithm under a Gaussian model.} For Figure \ref{fig: MLA-UCB N} to \ref{fig: MLA-UCB delta}, we report the final cumulative regret at $T=1000$ steps. For Figure \ref{fig: MLA-UCB T}, we choose $\rho^2 = 0.5, \Delta = 0.5, N = 1000$. Each experiment is repeated 100 times to report the average cumulative regrets.}
    \label{fig: MLA-UCB}
\end{figure}

From Figure \ref{fig: MLA-UCB}, we observe that the MLA-UCB algorithm can significantly improve upon the classical UCB algorithm under various settings.  In particular, Figure \ref{fig: MLA-UCB N} demonstrates that as the number of offline predictions $N$ grows, the cumulative regret decreases and finally converges to a limit determined by the correlation $\rho_k$.  For this experiment setting, collecting around $N=1000$ offline predictions for each arm is enough to reach the optimal regret. Figure \ref{fig: MLA-UCB rho} illustrates that as $\rho$ grows, the cumulative regret decays approximately linearly in $\rho^2$. In particular, we observe that as long as $N\geq 500$, the MLA-UCB algorithm can improve upon the baseline UCB algorithm under any $\rho_k^2$ above $0.1$. Figure \ref{fig: MLA-UCB delta} illustrates the behavior of regret w.r.t. the gap $\Delta$, it shows that our algorithm can provide significant improvement when $\Delta$ is relatively large. 
In Figure \ref{fig: MLA-UCB T}, we demonstrate that the shape of the cumulative regret curve of the MLA-UCB algorithm is similar to the UCB algorithm, and it is shrunk towards 0 due to the variance reduction. Overall, it confirms that the MLA-UCB algorithm improves cumulative regret in various settings.

\subsection{ML-generated non-Gaussian surrogate rewards}
\label{sec: ML simulation}
Next, we generate surrogate rewards from actual ML models. We consider the following reward generation process:
\begin{equation*}
    R_k = \sin (w_{k,1}x_1^2+w_{k,2}x_2^2)+\epsilon,
\end{equation*}
where $\mathbf{x} = (x_1, x_2)^\top$ is the feature vector, $\mathbf w_k = (w_{k,1}, w_{k,2})$ is the arm-specific weight parameter, $\epsilon$ is the random noise. The features are assumed to be only visible when generating the predictions and we do not use them directly to make decisions. We generate $X\sim\mathcal{N}(0, I_2)$ and $\epsilon\sim\mathcal{N}(0, \tau^2)$, due to the square and sine operation, the dependency of $R_k$ on the $\mathbf{x}$ are highly nonlinear and hence its distribution is different from a Gaussian distribution. 
\begin{figure}[ht]
    \centering
    \includegraphics[width=0.35\linewidth]{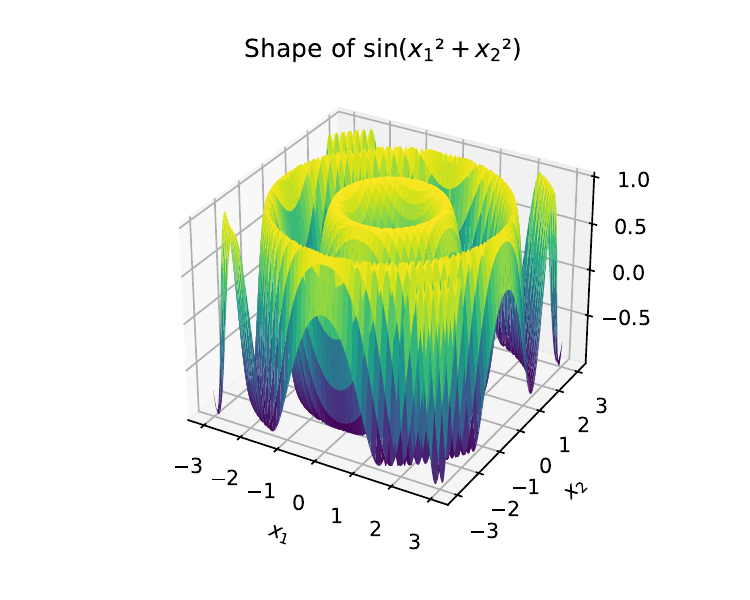}
    \includegraphics[width=0.35\linewidth]{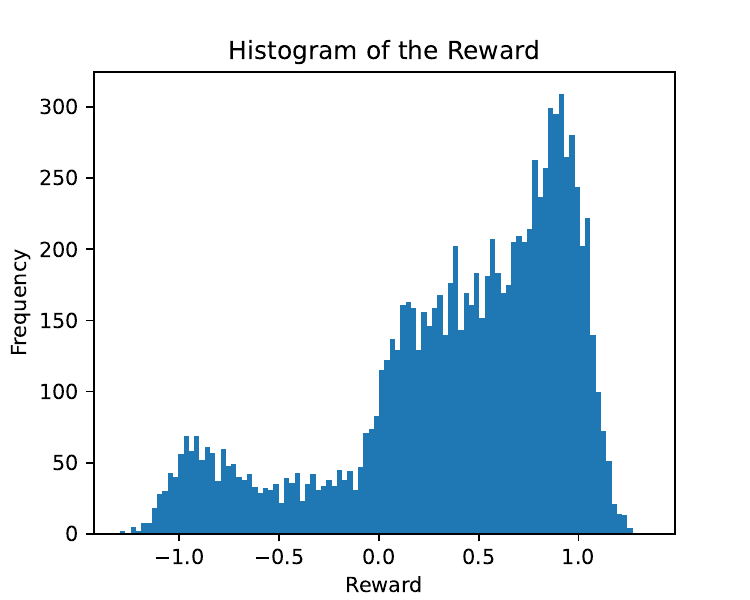}
    \caption{\textbf{The conditional expectation and distribution of the true rewards.} The true reward is generated using $w_{k,1} = w_{k,2}= 1$ and $\tau = 0.1$. The reward function is highly nonlinear and the reward distribution is  different from the Gaussian distribution.}
    \label{fig:shape}
\end{figure}

We consider using four different ML algorithms in MLA-UCB to predict the reward for each arm based on the features: (1) linear regression, (2) support vector regression (SVR), (3) two-layer neural networks, and (4) decision trees. For each arm, we train an individual model to fit its reward. We set $K=5$ in our simulation, and the weights $w_{k,1}, w_{k,2}$ are generated from a standard normal distribution. The correlation of predictions with the true reward of each model is summarized in Table \ref{tab:correlation}, and the experimental results of the MLA-UCB algorithm using these predictions are shown in Figure \ref{fig: ML}. 

\begin{table}[ht]
    \centering
    \begin{tabular}{c|cccc}
        \hline
         Average $\rho_k^2$&  Linear Regression & Support Vector Regression &  Neural Nets &Decision Tree  \\
         \hline
         \hline
         $\tau=0$&0.002 &0.662&0.675&0.849 \\
         \hline
         $\tau=0.2$&0.002 &0.571&0.572&0.655 \\
         \hline
         $\tau=0.4$&0.002 &0.390&0.402&0.314 \\
         \hline
         $\tau=0.6$&0.002 &0.248&0.276&0.157 \\
         \hline
    \end{tabular}
    \caption{\textbf{Average $\rho_k^2$ between predictions and true rewards among all arms.} }
    \label{tab:correlation}
\end{table}
\begin{figure}[ht]
    \centering
    \includegraphics[width=0.4\linewidth]{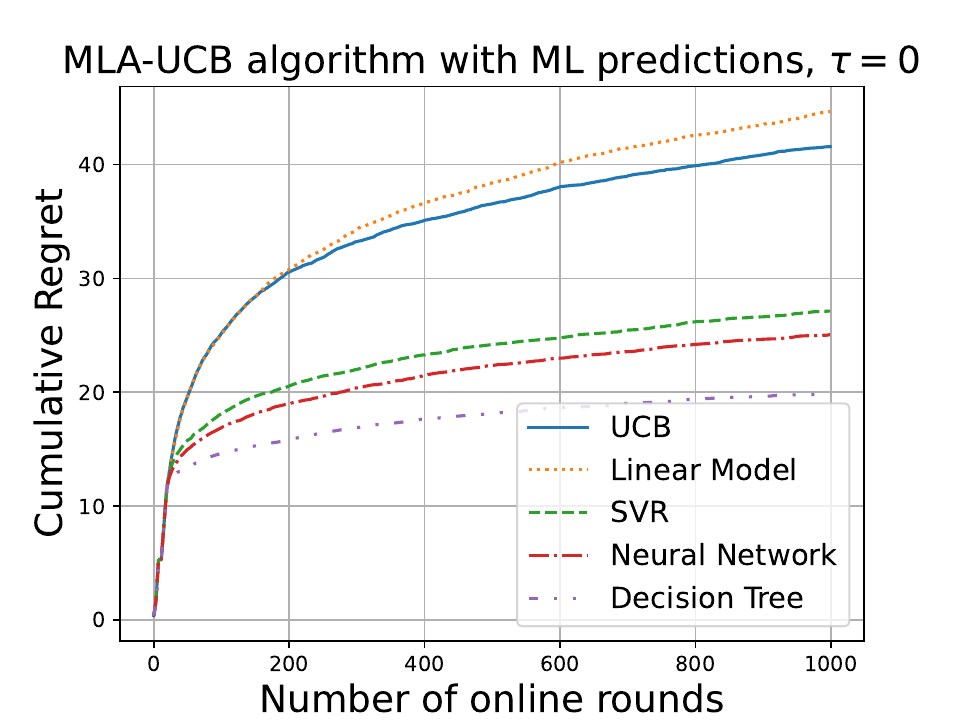}
    \includegraphics[width=0.4\linewidth]{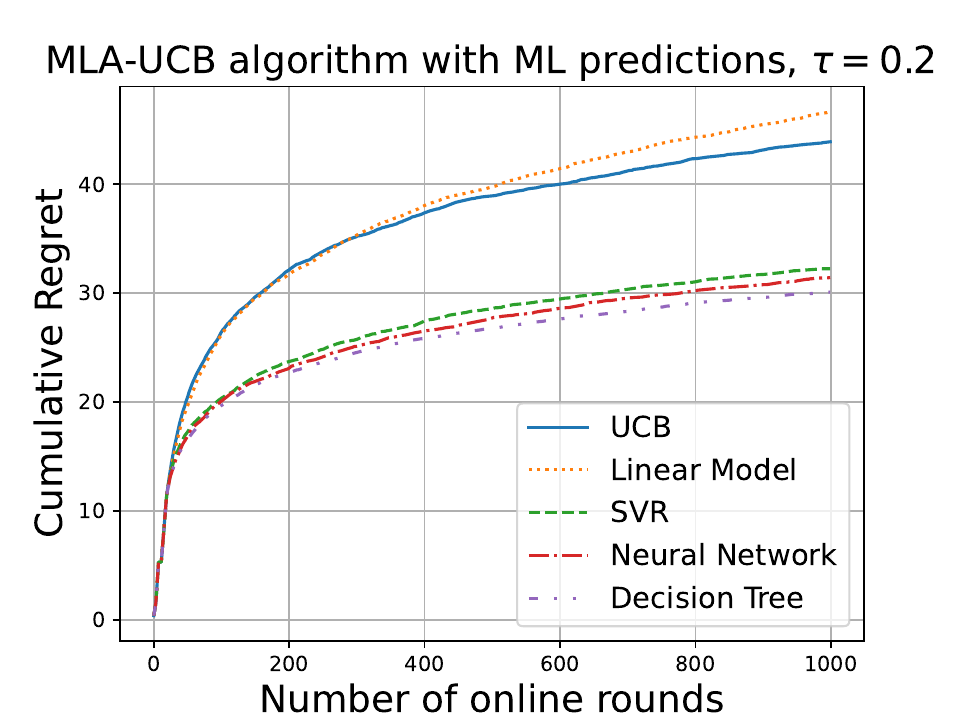}
    \includegraphics[width=0.4\linewidth]{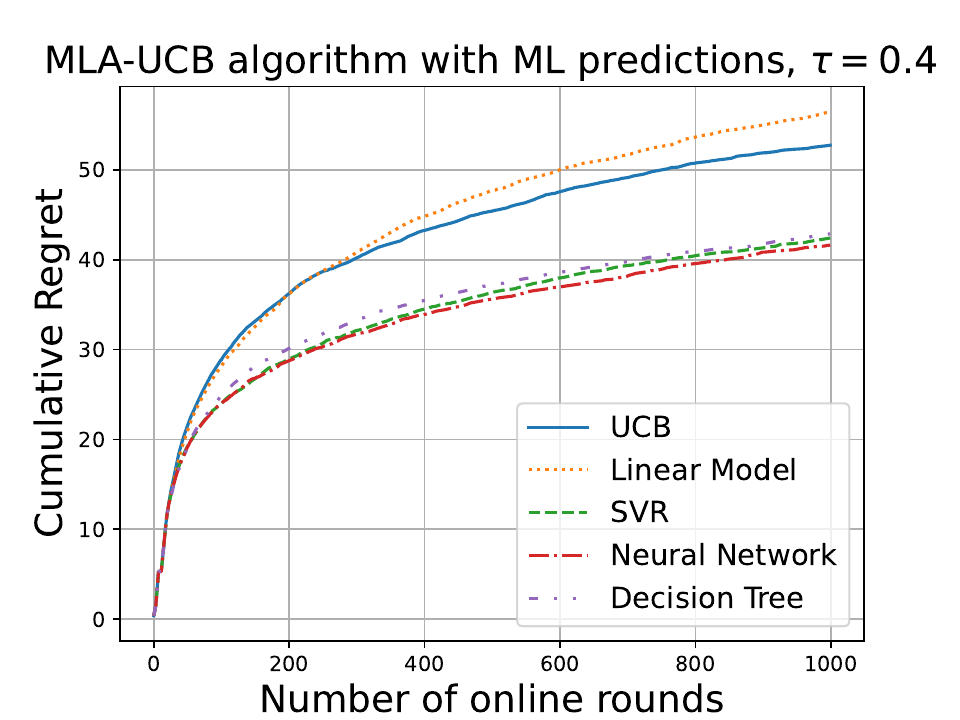}
    \includegraphics[width=0.4\linewidth]{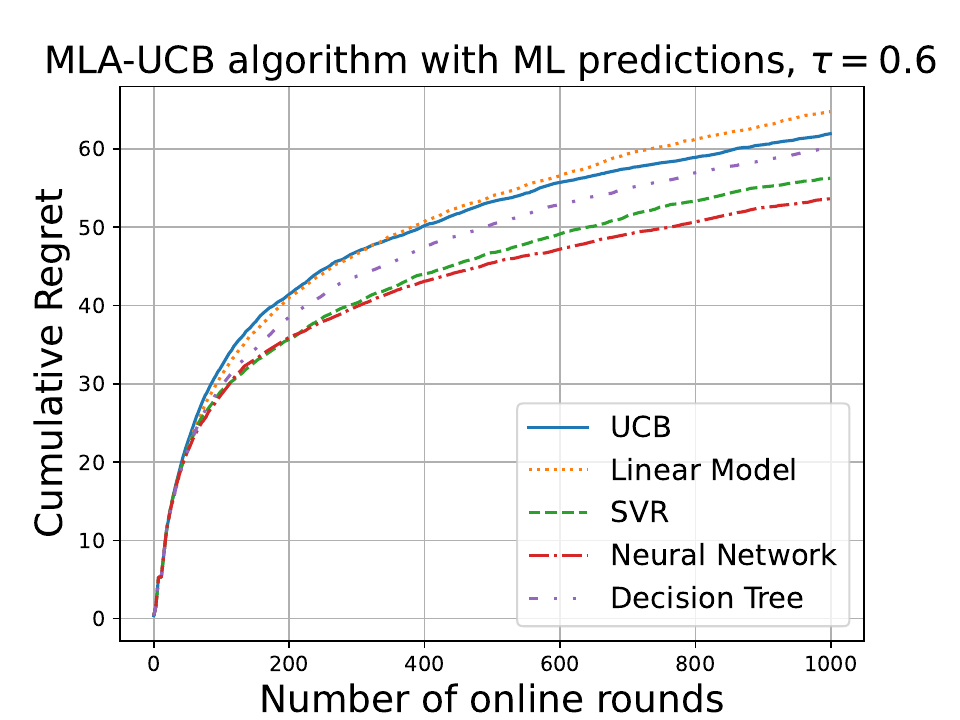}
    \caption{\textbf{Cumulative regret of the MLA-UCB algorithm and the classical UCB algorithm under different settings.} The true correlations of predictions of each setting are shown in Table \ref{tab:correlation}. We repeat each experiment 100 times under the same data-generating model and machine learning model to report the average regret.} 
    \label{fig: ML}
\end{figure}
From Figure \ref{fig: ML}, we observe that the MLA-UCB algorithm can effectively reduce the cumulative regret for predictions from various ML models, even if the Gaussianity assumption \eqref{eqn: gaussian model} does not hold. The regret reduction is more significant with better prediction models in terms of their $\rho_k^2$, while the exact relationship is more complicated than the experiments in Section \ref{sec: simulation Gaussian} since the correlation differs between arms. In particular, we found that although linear models fail to make meaningful predictions since the underlying function is highly nonlinear, the algorithm can still achieve comparable performance to the classical UCB algorithm. Overall, these experiments confirm that our algorithm can improve upon the classical UCB algorithm in various settings. 

\subsection{Batched reward}
In this section, we assess the performance of Algorithm \ref{alg:PPUCB batch} under batched rewards settings. We will use the same reward generation process as in Section \ref{sec: ML simulation}, where the true and surrogate rewards are from a non-Gaussian distribution. We will focus on three ML models: linear models, neural networks, and decision trees. The support vector regression is not considered here due to its inefficiency for large sample sizes. In our simulation, we compare the regret of the MLA-UCB algorithm with a batched version of the UCB algorithm, which pulls the arm 
$$A_t = \arg\max_{k}\left\{ \E_{k,n_{k,t}}[R] + \lb1+{m^{-\frac{1}{3}}}\rb\sqrt{\frac{ 2\hat\sigma_{R,k,n_{k,t}}^2\ln t}{mn_{k,t}}}\right\}$$
in each round. 

The experiment results are shown in Figure \ref{fig: ML batch}. We will choose batch size $m=100$, noise level $\tau=0.2$, average suboptimal gap $\Delta = 0.15$, and a total number of rounds $T=1000$ unless particularly specified. Again, we observe that the MLA-UCB algorithm can effectively reduce the cumulative regret for surrogates from various ML models in the batched reward settings. The regret reduction is more significant with a moderate batch size $m$, a smaller noise level $\tau$ (which implies a higher $\rho_k^2$), and a moderate level of suboptimal gap $\Delta$. In particular, the regret reduction is most significant when $m\Delta^2\approx 1$, which aligns with the local asymptotic regime specified in Assumption \ref{asm: m and T}. Similar to the non-batched settings, our algorithm can still achieve comparable performance to the classical UCB algorithm with surrogates from linear models, which fail to make a meaningful surrogate since the underlying function is highly nonlinear. Overall, these experiments confirm our theory that the MLA-UCB algorithm can reliably reduce the regret in batched reward settings.
\begin{figure}[ht]
    \centering
    \begin{subfigure}[t]{0.42\linewidth}
        \includegraphics[width=\linewidth]{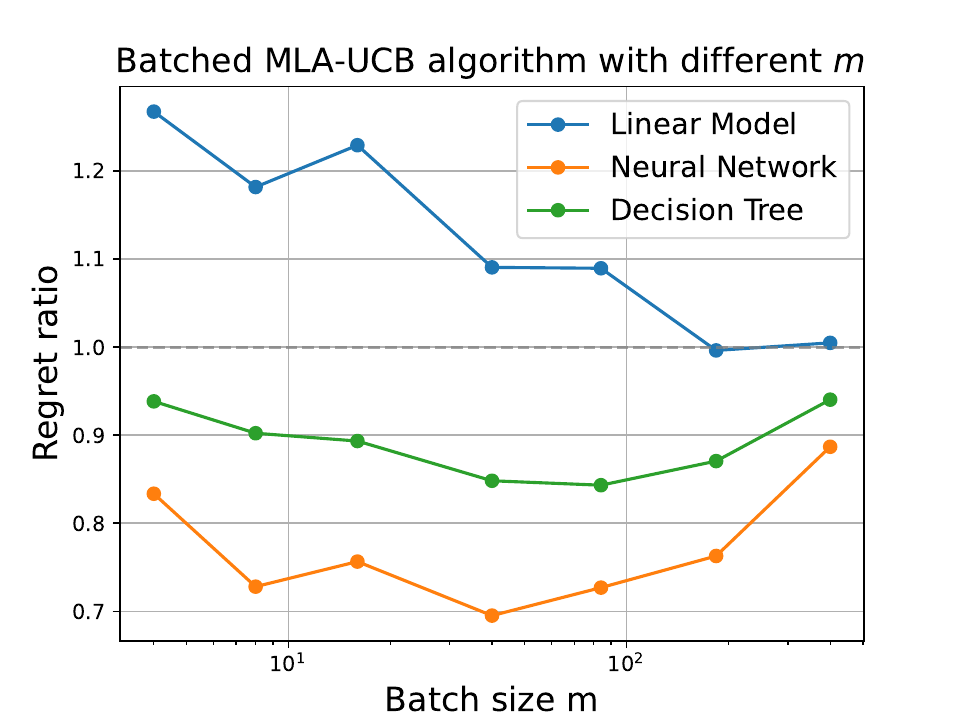}
        \caption{Regret ratio under different $m$}
        \label{fig: batch m}
    \end{subfigure}
    \begin{subfigure}[t]{0.42\linewidth}
        \includegraphics[width=\linewidth]{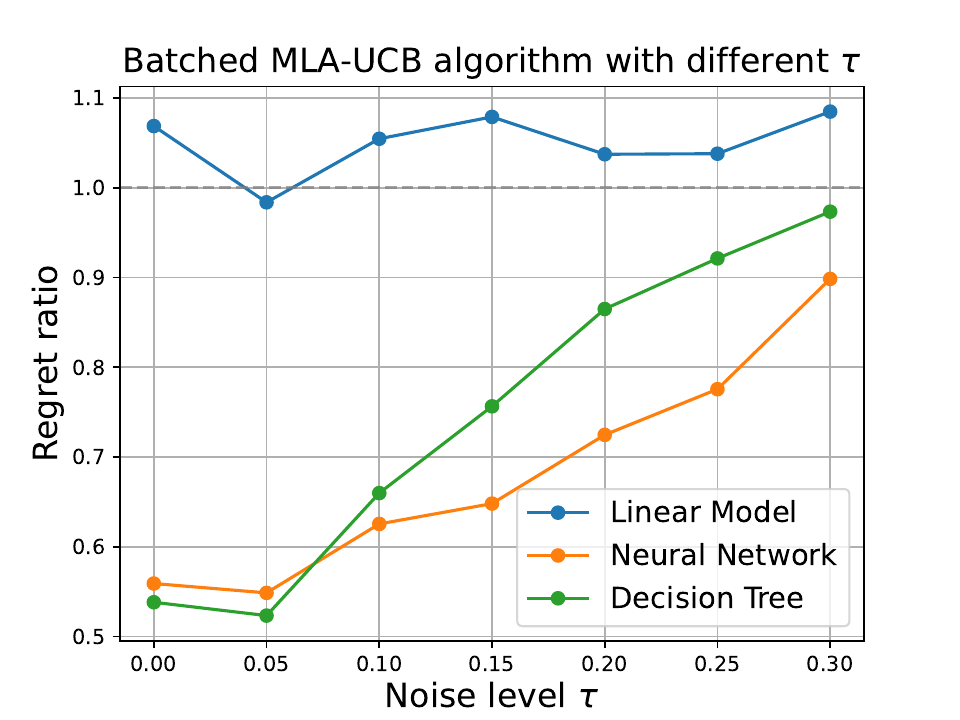}
        \caption{Regret ratio under different $\tau$}
        \label{fig: batch sigma}
    \end{subfigure}
    \begin{subfigure}[t]{0.42\linewidth}
        \includegraphics[width=\linewidth]{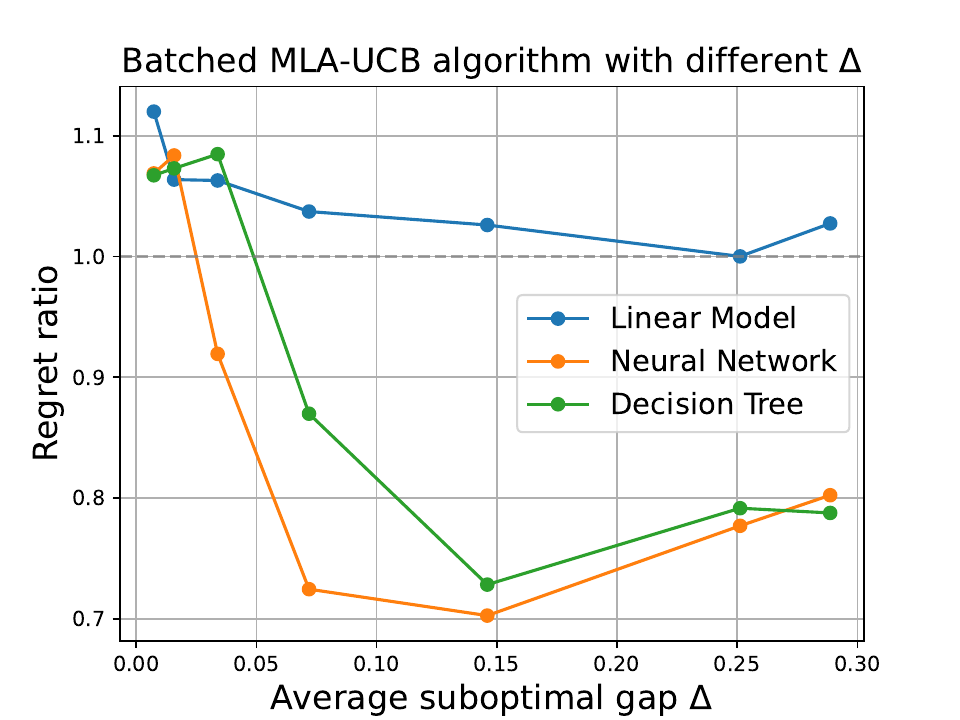}
        \caption{Regret ratio under different $\Delta$}
        \label{fig: batch delta}
    \end{subfigure}
    \begin{subfigure}[t]{0.42\linewidth}
        \includegraphics[width=\linewidth]{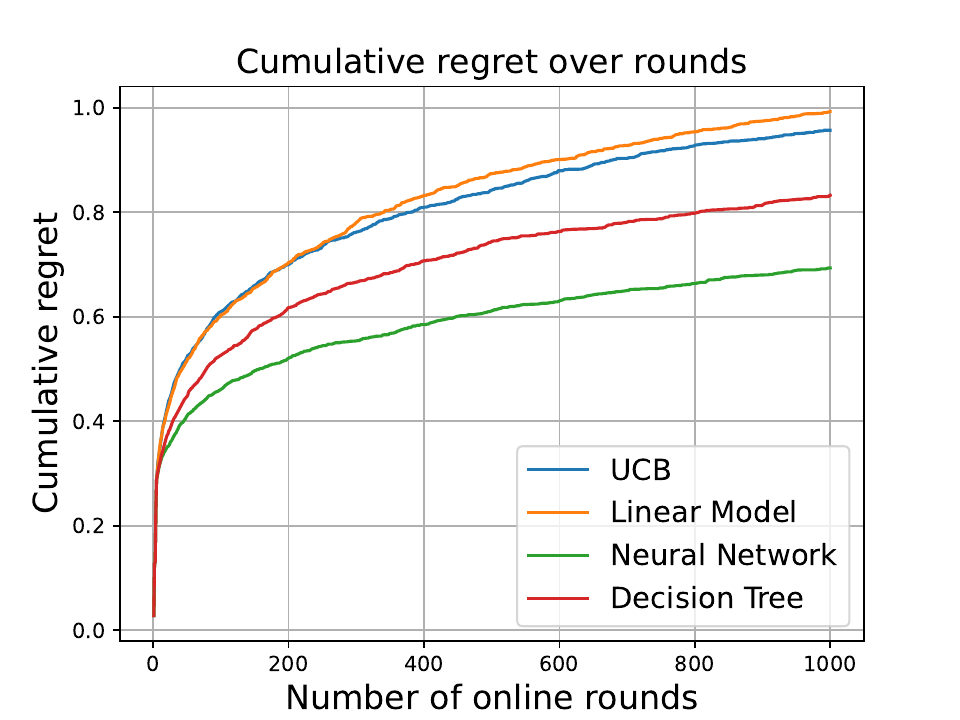}
        \caption{Cumulative regret over rounds}
        \label{fig: batch T}
    \end{subfigure}
    \caption{\textbf{Cumulative regret of the MLA-UCB algorithm under batched reward settings.} In Figure \ref{fig: batch m} to Figure \ref{fig: batch delta}, we report the ratio between the regret of the MLA-UCB algorithm and the UCB algorithm. In Figure \ref{fig: batch delta}, the x-axis represents the  $\Delta_k$ over all suboptimal arms (i.e., $k\neq k^\star$). We repeat each experiment 100 times under the same data-generating model and machine learning model to report the average regret.}
    \label{fig: ML batch}
\end{figure}
\section{Application}
\label{sec: application}
In this section, we demonstrate the advantage of our algorithms on two real-world applications. 
\subsection{Language model selection}
With the development of proprietary large language models (LLMs) (e.g., GPT, Claude, Gemini), practitioners often need to choose the most accurate model for a specific domain. However, evaluating these models on large datasets can be expensive and slow due to API costs and rate limits. Therefore, practitioners can apply the MAB framework to sequentially identify the most powerful model while maximizing the accuracy on all queries. 

Meanwhile, there are also open-source LLMs (e.g., Llama, DeepSeek, Qwen) that can be run locally without API cost. These models are often less accurate compared to proprietary models, but are more convenient and affordable. For a specific question, the correctness of these models can reflect the hardness of the question, which correlates with the performance of proprietary models and hence can be used as surrogates in our MLA-UCB framework. Moreover, practitioners can evaluate the performance of these open-sourced models on a large set of questions without budget concerns, which provides a source of offline surrogates in our settings.

Here, we consider solving a question answering task using the MMLU-pro dataset \citep{wang2024mmlu}, which comprises over 12,000 rigorously curated questions from academic exams and textbooks, spanning 14 diverse domains, including Math, Engineering, History, etc. We choose three proprietary models, GPT 4o, Claude 3.5 sonnet, and Gemini 2.0 flash as the three arms to choose from. And we use Llama 3.1 70B to generate the surrogate. The reward is the binary correctness of the model response, and we average the correctness over a batch of questions. We simulate a sequential decision-making setting where, in each round, a batch of questions is randomly picked from the question pool. The algorithm then selects a proprietary model $k$, and the system observes the average correctness of the proprietary model $R_{k,s}$ (true rewards) and the average correctness of the open-sourced model $\hat R_{k,s}$. In addition, a set of questions is held out, and we evaluate the open-sourced model on them to generate the offline surrogate $\hat R_{k,s}^\off$. The experiment result is shown in Figure \ref{fig: lm}. In particular, the average $\rho_k^2$ of the Llama model is 0.1457, and we observe a proportional reduction of regret over the classical UCB algorithm.
\begin{figure}
    \centering
    \includegraphics[width=0.49\linewidth]{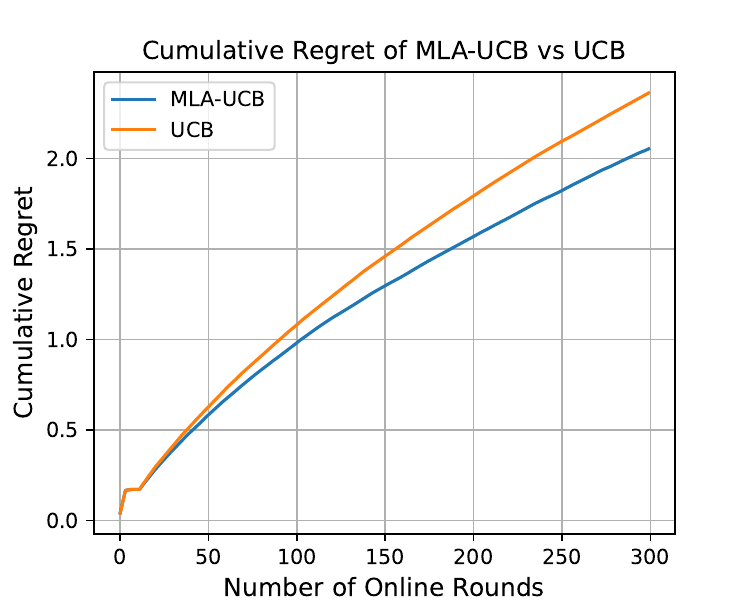}
    \includegraphics[width=0.49\linewidth]{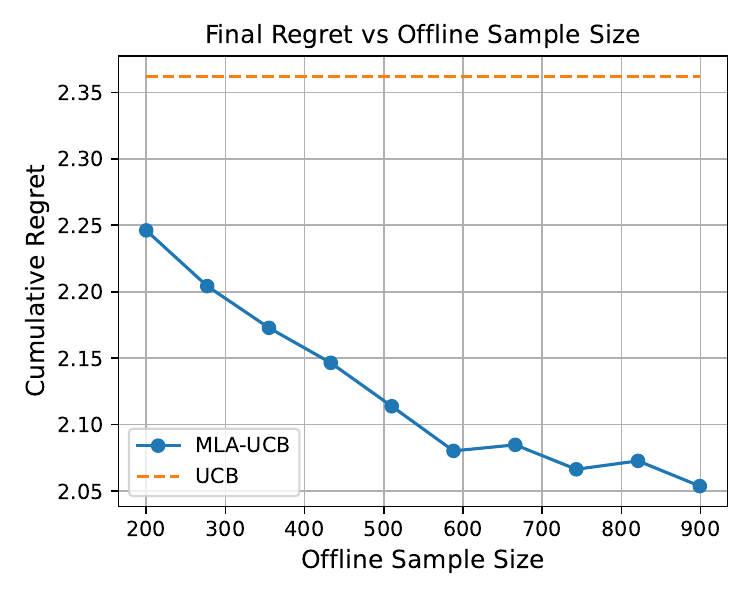}
    \caption{Experimental result for language model selection. In the left plot, we show the cumulative regret over the number of online rounds. In the right plot, we show the cumulative regret against the offline sample size, as long as the offline sample size is relatively large, our algorithm can always reduce the cumulative regret against the classical UCB algorithm. We repeat each experiment 1000 times and report the average regret.}
    \label{fig: lm}
\end{figure}
\subsection{Video recommendation}
We further evaluate our MLA-UCB algorithm on a real-world video recommendation scenario using the KuaiRec dataset \cite{gao2022kuairec}. KuaiRec is a large-scale dataset collected from the video-sharing mobile app Kuaishou. It is suitable for bandit algorithm evaluation because it contains a "fully observed" user-item interaction matrix, where interactions between nearly all users (1,411) and items (3,327) are recorded. 

We formulate this problem as a Multi-Armed Bandit task, where the arms correspond to different videos, and the reward is the watch ratio, which is defined as the ratio between the user's watch time and the video's duration. Our goal is to maximize user engagement through identifying and recommending the most popular video. Meanwhile, both user and item features are available to train machine learning models and construct surrogates. The dataset provides 30 user features, including activity on the platform and encrypted personal information, and 56 item daily features, including the properties of the video, such as duration and video type, and the daily statistics on the platform. 

For our experiment, we will pick 20 videos as candidate arms and use the data on the remaining videos to train a gradient boosting regression tree model and generate the surrogate. This represents the practical setting where we can use the historical data to build prediction models and assist in the decision-making for unseen new items. Due to the difference between the training and testing domains, the surrogates generated by such models are biased and hence unable to preserve the ranking of true rewards, which calls for the need for our MLA-UCB algorithm to eliminate the bias and utilize the information safely. We simulate a sequential decision-making setting where, in each round $t$, a user is randomly picked from the user pool. The algorithm then selects a video $k$, and the system observes the true watch ratio $R_{k,s}$ and the surrogate reward $\hat R_{k,s}$ generated by the model using user and video features. In addition, a set of users is held out and we use them to generate offline surrogate $\hat R_{k,s}^\off$. The experiment result is shown in Figure \ref{fig: video}. In particular, the average $\rho_k^2$ of our tree model is 0.0726, and we observe a proportional reduction of regret over the classical UCB algorithm.

\begin{figure}
    \centering
    \includegraphics[width=0.49\linewidth]{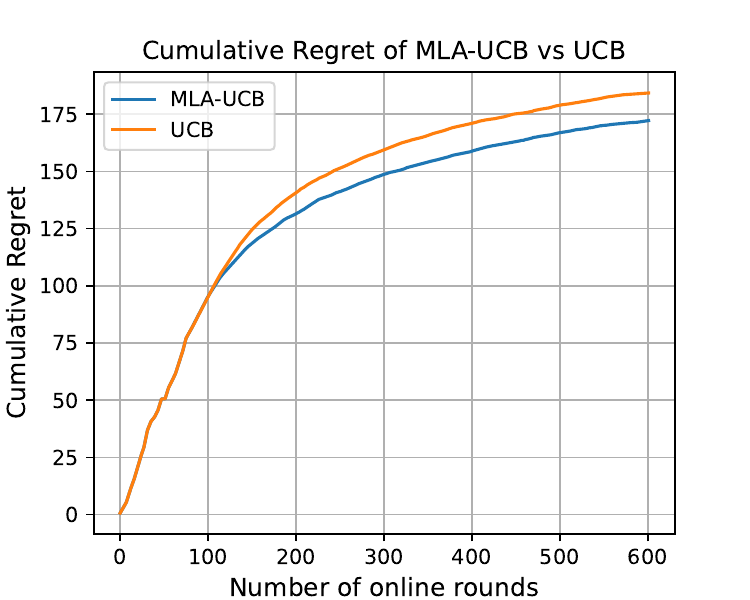}
    \includegraphics[width=0.49\linewidth]{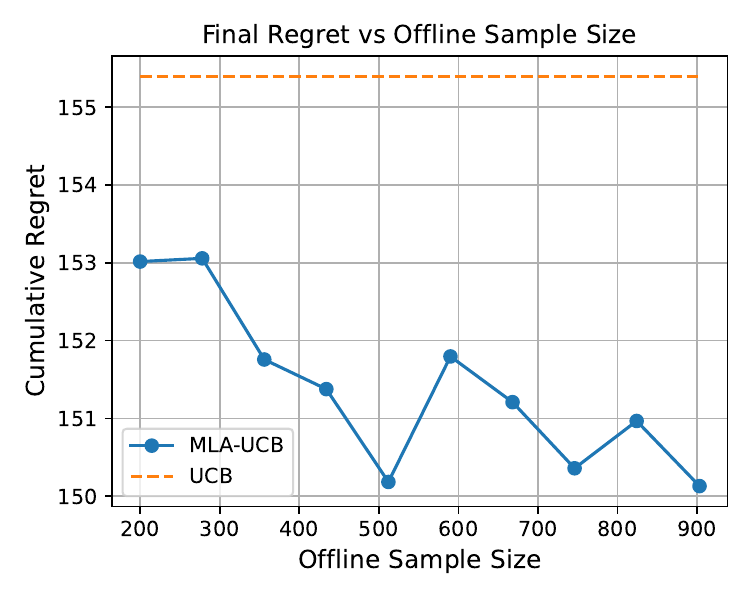}
    \caption{Experimental result for video recommendation. In the left plot, we show the cumulative regret over the number of online rounds. In the right plot, we show the cumulative regret against the offline sample size, as long as the offline sample size is relatively large, our algorithm can always reduce the cumulative regret against the classical UCB algorithm. We repeat each experiment 1000 times and report the average regret.}
    \label{fig: video}
\end{figure}

\section{Conclusion}
In this paper, we introduced a multi-armed bandit setting in which the learner has access to machine learning–generated surrogate rewards observed both offline and online. The surrogates may be arbitrarily biased and need not preserve the arm ranking, but their correlation with the true reward can be exploited for variance reduction. Building on machine-learning-assisted mean estimation, we proposed the MLA-UCB algorithm, which uses variance-adaptive confidence bounds and requires no prior knowledge of reward variances or reward–surrogate covariance. Under a joint Gaussian model with unknown mean and covariance, we established regret upper bounds and matching lower bounds, showing that MLA-UCB is asymptotically optimal and improves the leading regret constant in proportion to $1-\rho_k^2$ for suboptimal arms.

We further extended the approach to batched rewards with potentially non-Gaussian distributions. Using self-normalized moderate deviation bounds, we obtained computable confidence bounds and regret guarantees that preserve the same correlation-driven variance reduction in the leading term. Simulations and case studies on language-model selection and video recommendation illustrate that MLA-UCB can deliver consistent regret reductions with moderate offline surrogate sample sizes, including regimes where surrogate means are uninformative for arm ranking. Although the primary focus of this paper is on the MAB setting, ML-generated surrogate rewards can be useful in more complicated settings, such as contextual bandits. We leave this as an important future direction to explore further.

\bibliographystyle{plain}
\bibliography{main}

\appendix

\section{Proof}
\label{appendix: proof}
This section provides proof of the theoretical results presented in this paper. 
\subsection{Analysis of machine learning-assisted mean estimator}
First, we prove the properties of the machine learning-assisted mean estimator presented in Proposition \ref{prop: intercept} - \ref{prop: CI mean}.
\begin{proof}[Proof of Proposition \ref{prop: intercept}]
    Recall that the definition of the machine learning-assisted mean estimator is 
    $$
    \mukt = \E_{k,n}[R] - \hat \lambda_{k,n}\left(\E_{k,n}[\hat R]  - \E_{k}^\off[\hat R]\right), 
    $$
    where $\hat \lambda_{k,n} = \frac{N_k}{n+N_k}\frac{\hat \sigma_{R,\hat R, k,n}}{\hat \sigma_{\hat R, k,n}^2}$. By the standard regression result, we know that the solution of the ordinary least squares problem \eqref{eqn: OLS equivalence} is: 
    \begin{equation}
    \begin{aligned}
    \label{eqn: alpha beta}
        \hat \beta_{k,n} =& \frac{\widehat \cov(R_k, \hat R_k - \E_{k,n}^\all[\hat R])}{\widehat \var(\hat R_k - \E_{k,n}^\all[\hat R])} = \frac{\hat \sigma_{R,\hat R, k,n}}{\hat \sigma_{\hat R, k,n}^2}\\
        \hat \alpha_{k,n} =& \E_{k,n}[R] - \hat \beta_{k,n}(\E_{k,n}[\hat R] - \E_{k,n}^\all[\hat R])\\
        =&\E_{k,n}[R] - \frac{\hat \sigma_{R,\hat R, k,n}}{\hat \sigma_{\hat R, k,n}^2}\lb \E_{k,n}[\hat R] - \frac{1}{n+N_k}\lb n\E_{k,n}[\hat R] + N_k\E_{k}^\off[\hat R]\rb\rb\\
        =&\E_{k,n}[R] - \frac{N_k}{n+N_k}\frac{\hat \sigma_{R,\hat R, k,n}}{\hat \sigma_{\hat R, k,n}^2}\lb \E_{k,n}[\hat R] - \E_{k}^\off[\hat R]\rb\\
        =&\mukt,
    \end{aligned}
    \end{equation}
    which finishes the proof.
\end{proof}
\begin{proof}[Proof of Proposition \ref{prop: pp mean distribution}]
    From \eqref{eqn: alpha beta}, we know that 
    \begin{equation}
    \begin{aligned}
        \mukt = & \E_{k,n}[R] - \hat \beta_{k,n}(\E_{k,n}[\hat R] - \E_{k,n}^\all[\hat R]).
    \end{aligned}
    \end{equation}
    From the data generation distribution \ref{eqn: gaussian model}, using the conditional distribution of multivariate Gaussian variables, we can decompose the reward as 
    \begin{equation}
    \label{eqn: Gaussian decomposition}
    R_{k,s} = \mu_k + \rho_k \frac{\sigma_{k}}{\tilde\sigma_{k}} (\E_{k,n}^\all[\hat R]- \tilde\mu_k) + \rho_k \frac{\sigma_{k}}{\tilde\sigma_{k}}(\hat R_{k,s} - \E_{k,n}^\all[\hat R]) + \epsilon_{k,s},
\end{equation}
where $\epsilon_{k,s}\sim \mathcal{N}(0,(1-\rho_k^2)\sigma_k^2)$ is independent of $\hat R_{k,s} - \E_{k,n}^\all[\hat R]$.
Then it gives us
\begin{equation}
\begin{aligned}
\label{eqn: decomposition mu}
    &\mukt - \mu_k \\
    =& \rho_k \frac{\sigma_{k}}{\tilde\sigma_{k}} (\E_{k,n}[\hat R]- \tilde\mu_k)+ \frac{1}{n}\sum_{s=1}^{n}\epsilon_{k,s} - \lb\frac{\sum_{s=1}^{n} \epsilon_{k,s}(\hat R_{k,s} - \E_{k,n} [\hat R] )}{\sum_{s=1}^{n} (\hat R_{k,s} - \E_{k,n} [\hat R])^2}+  \rho_k \frac{\sigma_{k}}{\tilde\sigma_{k}} \rb\left(\E_{k,n}[\hat R]  - \E_{k,n}^\all[\hat R]\right) \\
    =&\rho_k \frac{\sigma_{k}}{\tilde\sigma_{k}} (\E_{k,n}^\all[\hat R] - \tilde\mu_k) + \sum_{s=1}^{n}\lb\frac{1}{n}-\frac{(\hat R_{k,s} - \E_{k,n} [\hat R] )}{\sum_{s=1}^{n} (\hat R_{k,s} - \E_{k,n} [\hat R])^2}\left(\E_{k,n}[\hat R]  - \E_{k,n}^\all[\hat R]\right)\rb\epsilon_{k,s}.
\end{aligned}
\end{equation}
By a standard orthogonality argument, $\E_{k,n}^\all[\hat R]$ is independent of $\hat R_{k,s} - \E_{k,n} [\hat R]$ and $\E_{k,n}[\hat R]  - \E_{k,n}^\all[\hat R]$. Therefore, define 
\begin{equation}
\begin{aligned}
    S_{1,k,n}=& \rho_k \frac{\sigma_{k}}{\tilde\sigma_{k}} (\E_{k,n}^\all[\hat R] - \tilde\mu_k)\\
    S_{2,k,n} =& \sum_{s=1}^{n}\lb\frac{1}{n}-\frac{(\hat R_{k,s} - \E_{k,n} [\hat R] )}{\sum_{s=1}^{n} (\hat R_{k,s} - \E_{k,n} [\hat R])^2}\left(\E_{k,n}[\hat R]  - \E_{k,n}^\all[\hat R]\right)\rb\epsilon_{k,s},
\end{aligned}
\end{equation}
then we have $S_{1,k,n}$ and $S_{2,k,n}$ are independent. Then it is straightforward to verify that $S_{1,k,n}\in \Fkt$ and $S_{1,k,n}\sim \mathcal{N}\lb0, \frac{1}{n+N_{k}}\rho_k^2 \sigma_k^2\rb$. For $S_{2,k,n}$, conditional on $\Fkt$, it is a linear combination of $\epsilon_{k,s}$, hence it is a Gaussian random variable with mean 0, and the variance is 
\begin{equation}
\label{eqn: S2 var}
    \begin{aligned}
        \var(S_{2,k,n}|\Fkt) = &(1-\rho_k^2)\sigma_k^2\sum_{s=1}^{n}\lb\frac{1}{n}-\frac{(\hat R_{k,s} - \E_{k,n} [\hat R] )}{\sum_{s=1}^{n} (\hat R_{k,s} - \E_{k,n} [\hat R])^2}\left(\E_{k,n}[\hat R]  - \E_{k,n}^\all[\hat R]\right)\rb^2 \\ 
        = & (1-\rho_k^2)\sigma_k^2\sum_{s=1}^{n}\lb\lb\frac{1}{n}\rb^2+\lb\frac{(\hat R_{k,s} - \E_{k,n} [\hat R] )}{\sum_{s=1}^{n} (\hat R_{k,s} - \E_{k,n} [\hat R])^2}\left(\E_{k,n}[\hat R]  - \E_{k,n}^\all[\hat R]\right)\rb^2\rb\\
        =& (1-\rho_k^2)\sigma_k^2\lb\frac{1}{n} + \frac{(\E_{k,n} [\hat R] - \E_{k,n}^\all [\hat R])^2}{\sum_{s=1}^{n} (\hat R_{k,s} - \E_{k,n}[\hat R])^2}\rb = \frac{1}{n} Z_{k,n} (1-\rho_k^2) \sigma_k^2, 
    \end{aligned}
\end{equation}
which finishes the proof. 
\end{proof}

As a corollary, we can obtain the conditional distribution of $\mukt$ on $Z_{k,n}$, which will be useful in the proof of Theorem \ref{thm: regret bound} later. 
\begin{corollary}
\label{cor: Skt}
Define $\mathcal{S}_{k,n}$ as the sigma field generated by $\{\hat R_{k,s} - \E_{k,n} [\hat R]\}_{s=1}^{n}$ and $\E_{k,n}[\hat R]  - \E_{k,n}^\all[\hat R]$, then $Z_{k,n}\in\mathcal{S}_{k,n} \subset \Fkt$, and 
    $$\mukt - \mu_k | \mathcal{S}_{k,n} \sim \mathcal{N}\lb0, \frac{1}{n+N_{k}}\rho_k^2 \sigma_k^2+\frac{1}{n} Z_{k,n} (1-\rho_k^2) \sigma_k^2\rb$$
\end{corollary}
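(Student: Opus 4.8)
The plan is to piggyback on the decomposition $\mukt = \mu_k + S_1 + S_2$ obtained in the proof of Proposition~\ref{prop: pp mean distribution}, where $S_1 = \rho_k\frac{\sigma_k}{\tilde\sigma_k}\lb\E_{k,t}^\all[\hat R]-\tilde\mu_k\rb$ and $S_2 = \sum_{s=1}^{n_{k,t}} c_{k,s}\,\epsilon_{k,s}$ with $c_{k,s} = \frac{1}{n_{k,t}} - \frac{\hat R_{k,s}-\E_{k,t}[\hat R]}{\sum_{r}(\hat R_{k,r}-\E_{k,t}[\hat R])^2}\lb\E_{k,t}[\hat R]-\E_{k,t}^\all[\hat R]\rb$, and to simply track how $S_1$ and $S_2$ behave once we condition on the finer $\sigma$-field $\mathcal{S}_{k,t}$ instead of $\Fkt$.

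First I would settle the measurability claims. Every generator of $\mathcal{S}_{k,t}$ is a deterministic function of the surrogate rewards $\{\hat R_{k,s}^\off\}_{s=1}^{N_k}\cup\{\hat R_{k,s}\}_{s=1}^{n_{k,t}}$, so $\mathcal{S}_{k,t}\subset\Fkt$; and $\Zkt = 1 + n_{k,t}\lb\E_{k,t}[\hat R]-\E_{k,t}^\all[\hat R]\rb^2/\sum_s\lb\hat R_{k,s}-\E_{k,t}[\hat R]\rb^2$ is built only from the squared ancillary statistic $\lb\E_{k,t}[\hat R]-\E_{k,t}^\all[\hat R]\rb^2$ and from $\sum_s\lb\hat R_{k,s}-\E_{k,t}[\hat R]\rb^2$, both $\mathcal{S}_{k,t}$-measurable, hence $\Zkt\in\mathcal{S}_{k,t}$. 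Likewise each coefficient $c_{k,s}$ is $\mathcal{S}_{k,t}$-measurable.

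Next I would identify the two conditional laws. For $S_1$: it is $\sigma\lb\E_{k,t}^\all[\hat R]\rb$-measurable, and the proof of Proposition~\ref{prop: pp mean distribution} already invoked Basu's theorem to show that the complete sufficient statistic $\E_{k,t}^\all[\hat R]$ is independent of the ancillary statistics $\{\hat R_{k,s}-\E_{k,t}[\hat R]\}_s$ and $\E_{k,t}[\hat R]-\E_{k,t}^\all[\hat R]$, i.e. independent of $\mathcal{S}_{k,t}$; so $S_1$ is independent of $\mathcal{S}_{k,t}$ and its conditional law equals its marginal law $\N\!\lb0,\frac{\rho_k^2\sigma_k^2}{n_{k,t}+N_k}\rb$. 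For $S_2$: the $\epsilon_{k,s}$ are i.i.d.\ $\N(0,(1-\rho_k^2)\sigma_k^2)$ and independent of $\Fkt\supset\mathcal{S}_{k,t}$, so conditionally on $\mathcal{S}_{k,t}$ it is a centered Gaussian with variance $(1-\rho_k^2)\sigma_k^2\sum_s c_{k,s}^2$, which is precisely the computation \eqref{eqn: S2 var}, equal to $\frac{1}{n_{k,t}}\Zkt(1-\rho_k^2)\sigma_k^2$.

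Finally I would add the two pieces. Since $\{\epsilon_{k,s}\}_s$ is independent of the whole $\sigma$-field generated by the surrogate rewards (which contains both $\mathcal{S}_{k,t}$ and $\E_{k,t}^\all[\hat R]$), and $\E_{k,t}^\all[\hat R]$ is independent of $\mathcal{S}_{k,t}$ by Basu, conditioning on $\mathcal{S}_{k,t}$ leaves $\E_{k,t}^\all[\hat R]$ and $\{\epsilon_{k,s}\}_s$ mutually independent; hence $S_1$ (a function of the former) and $S_2$ (a function of $\mathcal{S}_{k,t}$ and the latter) are conditionally independent given $\mathcal{S}_{k,t}$. Adding two conditionally independent centered Gaussians yields $\mukt-\mu_k\mid\mathcal{S}_{k,t}\sim\N\!\lb0,\frac{\rho_k^2\sigma_k^2}{n_{k,t}+N_k}+\frac{\Zkt(1-\rho_k^2)\sigma_k^2}{n_{k,t}}\rb$, as claimed. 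The only step requiring care is this last conditional-independence claim --- one must be sure that conditioning on $\mathcal{S}_{k,t}$ does not couple the offline-mean randomness inside $S_1$ with the regression-noise randomness inside $S_2$ --- but this is immediate once the noise vector is known to be independent of all surrogate-reward information jointly and Basu decouples $\E_{k,t}^\all[\hat R]$ from $\mathcal{S}_{k,t}$; everything else reuses the bookkeeping of Proposition~\ref{prop: pp mean distribution}.
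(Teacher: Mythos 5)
Your proposal is correct and follows essentially the same route as the paper: both reuse the decomposition $\mukt=\mu_k+S_1+S_2$ from Proposition \ref{prop: pp mean distribution}, apply Basu's theorem to get $S_1\perp\mathcal{S}_{k,t}$, observe that $S_2$ remains a linear combination of the $\epsilon_{k,s}$ with $\mathcal{S}_{k,t}$-measurable coefficients so its conditional variance is unchanged from \eqref{eqn: S2 var}, and sum the two conditionally independent Gaussians. Your write-up is in fact somewhat more careful than the paper's on the one delicate point, namely justifying that $S_1$ and $S_2$ remain independent \emph{conditionally} on $\mathcal{S}_{k,t}$, which the paper asserts with less detail.
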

\begin{proof}
    By definition, $Z_{k,n}\in\mathcal{S}_{k,n} \subset \Fkt$ holds. And by orthogonality $\E_{k,n}^\all[\hat R]$ is independent of $\hat R_{k,s} - \E_{k,n} [\hat R]$ and $\E_{k,n}[\hat R]  - \E_{k,n}^\all[\hat R]$. Hence, $S_{1,k,n}$ is independent of $\mathcal{S}_{k,n}$. Moreover, conditional on $\mathcal{S}_{k,n}$, $S_{2,k,n}$ is still a linear combination of $\epsilon_{k,s}$, it is independent of $S_{1,k,n}$ and the variance is the same as in \eqref{eqn: S2 var}. Hence, we know that $S_{1,k,n}$ and $S_{2,k,n}$ are two independent Gaussian random variables conditional on $\mathcal{S}_{k,n}$, and the variance is $\frac{1}{n+N_{k}}\rho_k^2 \sigma_k^2+\frac{1}{n} Z_{k,n} (1-\rho_k^2)\sigma_k^2$, which finishes the proof. 
\end{proof}

Next we prove the concentration bound in Proposition \ref{prop: CI mean}.
\begin{proof}[Proof of Proposition \ref{prop: CI mean}]
    To prove the concentration bound \eqref{eqn: pp mean concentration}, we handle the two quantities $S_{1,k,n}$ and $S_{2,k,n}$ separately. First, for $S_{1,k,n}$, we have proven that $S_{1,k,n}\sim \N(0, \frac{1}{n+N_{k}}\rho_k^2 \sigma_k^2),$ and by definition, 
    \begin{equation}
    \label{eqn: chi sq Rkt}
         \hat\sigma^2_{R,k,n} = \frac{1}{n - 1} \sum_{s=1}^{n}\lb R_{k,s}  - \E_{k,n}[ R]\rb^2 \sim \frac{\sigma_k^2}{n-1}\chi_{n-1}^2.
    \end{equation}
    Again, by orthogonality, $S_{1,k,n}$ is independent of $\hat\sigma^2_{R,k,n}$, hence
    $$
    \frac{S_{1,k,n}}{\sqrt{\hat\sigma^2_{R,k,n}}}\sim \sqrt\frac{1}{n+N_{k}}\rho_k t_{n-1}.
    $$
    Therefore, we can use the quantile of Student's t-distribution and obtain that
    \begin{equation}
        \label{eqn: bound S1}
        \P\lb S_{1,k,n} \leq - q_{n-1}(\delta)\sqrt\frac{\hat\sigma_{R,k,n}^2}{n+N_{k}}|\rho_k|\rb\leq \delta.
    \end{equation}
    Next, for $S_{2,k,n}$, from \eqref{eqn: S2 var} we know that
    $$
    S_{2,k,n}|\Fkt \sim \N\lb 0, \frac{1}{n} Z_{k,n} (1-\rho_k^2) \sigma_k^2\rb.
    $$
    Moreover, using Proposition \ref{prop: intercept}, we know that $\hat\sigma_{\epsilon,k,n}^2$ is the average sum of residuals for the ordinary least squares problem \eqref{eqn: OLS equivalence}. Therefore, using the linear representation \eqref{eqn: Gaussian decomposition}, the classical results in OLS suggest that 
    \begin{equation}
        \label{eqn: chi sq ekt}
        \hat\sigma_{\epsilon,k,n}^2|\Fkt \sim \frac{(1-\rho_k^2)\sigma_k^2}{n-2}\chi_{n-2}^2,
    \end{equation}
    and $\hat\sigma_{\epsilon,k,n}^2$ is independent of $\hat \alpha_{k,n},\hat \beta_{k,n}$ conditional on $\Fkt$. Notice that $S_{2,k,n} = \mukt -\mu_k - S_{1,k,n}$, $S_{1,k,n} \in \Fkt$, and $\mukt = \hat \alpha_{k,n}$ by Proposition \ref{prop: intercept}, thus $\hat\sigma_{\epsilon,k,n}^2$ and $S_{2,k,n}$ are independent conditional on $\Fkt$. Therefore, we know that 
    $$
    \frac{S_{2,k,n}}{\sqrt{\hat\sigma_{\epsilon,k,n}^2}}\Big|\Fkt \sim \sqrt\frac{Z_{k,n}}{n}t_{n-2},
    $$
    Again, we can use the quantile of Student's t-distribution and obtain that
    \begin{equation}
        \label{eqn: bound S2}
        \P\lb S_{2,k,n} \leq - q_{n-2}(\delta)\sqrt\frac{Z_{k,n}\hat\sigma_{\epsilon,k,n}^2}{n}\Bigg|\Fkt\rb\leq \delta.
    \end{equation}
    In particular, it is well known that for any fixed $c>0$, the tail probability $\P(t_{d}>c)$ is decreasing with respect to $d$ (a proof can be found in Corollary 4.3 in \cite{ghosh1973some}). Hence, we have $q_{n-2}(\delta) \geq q_{n-1}(\delta)$ for any $\delta \in (0,\frac{1}{2})$. Combining \eqref{eqn: bound S1} and \eqref{eqn: bound S2} together, we can obtain that 
    \begin{equation*}
    \begin{aligned}
        &\P\lb \mu_k > \mukt + q_{n - 2}(\delta) \lb\sqrt{\frac{ \hat\sigma_{R,k,n}^2}{n+N_k}}+\sqrt{\frac{\Zkt\hat\sigma_{\epsilon,k,n}^2}{n}}\rb\rb \\
        =&\P\lb  S_{1,k,n}+S_{2,k,n}< -q_{n - 2}(\delta) \lb\sqrt{\frac{ \hat\sigma_{R,k,n}^2}{n+N_k}}+\sqrt{\frac{\Zkt\hat\sigma_{\epsilon,k,n}^2}{n}}\rb  \rb\\
        \leq& \P\lb  S_{1,k,n}< -q_{n - 2}(\delta) \sqrt{\frac{ \hat\sigma_{R,k,n}^2}{n+N_k}}  \rb + \P\lb  S_{2,k,n}< -q_{n - 2}(\delta) \sqrt{\frac{\Zkt\hat\sigma_{\epsilon,k,n}^2}{n}} \rb\\
        \leq &\P\lb  S_{1,k,n}< -q_{n - 1}(\delta) \sqrt{\frac{ \hat\sigma_{R,k,n}^2}{n+N_k}}|\rho_k|  \rb + \E\left[\P\lb  S_{2,k,n}< -q_{n - 2}(\delta) \sqrt{\frac{\Zkt\hat\sigma_{\epsilon,k,n}^2}{n}} \Bigg|\Fkt\rb\right]
        \leq 2\delta,
    \end{aligned}
    \end{equation*}
    which finishes the proof. 
\end{proof}

Next, we prove the upper bound for the quantile of Student's t distribution in Proposition \ref{prop: quantile bound}. It is crucial to analyze the performance of the Algorithm \ref{alg:PPUCB} later. 
\begin{proof}[Proof of Proposition \ref{prop: quantile bound}]
    It suffices to prove that for a random variable $T_d\sim t_d$, the following inequality holds:
    \begin{equation}
        \P\lb T_{d} \geq \sqrt{d(s^{\frac{2}{d-1}}-1)}\rb \leq \frac{1}{2s\sqrt{\ln s}}.
    \end{equation}
    By the definition of the t-distribution, let $X\sim \N(0,1)$ and $Y\sim \chi_d^2$ be two independent random variables, then $\frac{X}{\sqrt{Y/d}} \sim t_d$, and then by the Mills' inequality $\P(X>t)\leq \frac{1}{\sqrt{2\pi}}\frac{1}{t}\exp(-\frac{t^2}{2})$, 
    \begin{equation}
    \label{eqn: t tail}
        \P\lb T_{d} \geq \sqrt{d(s^{\frac{2}{d-1}}-1)}\rb = \P\lb X \geq  \sqrt{Y(s^{\frac{2}{d-1}}-1)}\rb \leq \frac{1}{\sqrt{2\pi(s^{\frac{2}{d-1}}-1)}}\E\left[{Y^{-\frac{1}{2}}e^{-\frac{1}{2}Y(s^{\frac{2}{d-1}}-1)}}\right],
    \end{equation}
    for the expectation on the right hand side, notice that for any $k>0$
    \begin{equation}
    \begin{aligned}
        \E[Y^{-1/2}e^{-\frac{1}{2}kY}] &= \int_{0}^\infty \frac{1}{2^{d/2}\Gamma(d/2)}x^{(d-1)/2-1}e^{-(1+k)x/2}dx\\
        &=\frac{1}{(1+k)^{\frac{d-1}{2}}}\int_{0}^\infty \frac{1}{2^{d/2}\Gamma(d/2)}x^{(d-1)/2-1}e^{-x/2}dx \\
        &= \frac{1}{\sqrt{2}(1+k)^{\frac{d-1}{2}}}\frac{\Gamma((d-1)/2)}{\Gamma(d/2)},
    \end{aligned}
    \end{equation}
    take $k = s^{\frac{2}{d-1}}-1$, we have $\E\left[{Y^{-\frac{1}{2}}e^{-\frac{1}{2}Y(s^{\frac{2}{d-1}}-1)}}\right] = \frac{1}{\sqrt{2}s}\frac{\Gamma((d-1)/2)}{\Gamma(d/2)}$. Moreover, it is easy to prove by induction that 
    $$
    \frac{\Gamma((d-1)/2)}{\Gamma(d/2)} \leq \sqrt{\frac{2\pi}{d}}, \quad \forall d \geq 2, d\in \mathbb{N}^+,  
    $$
    therefore we can derive from \eqref{eqn: t tail} that 
    $$
    \P\lb T_{d} \geq \sqrt{d(s^{\frac{2}{d-1}}-1)}\rb \leq \frac{1}{\sqrt{2d}s\sqrt{s^{\frac{2}{d-1}}-1}} \leq \frac{1}{\sqrt{2d}s\sqrt{\frac{2\ln s}{d-1}}} \leq \frac{1}{2s\sqrt{\ln s}}
    $$
\end{proof}
In addition, we will use the following technical lemma on the tail bound of $\chi^2$ distribution from Lemma \ref{lem: chi square}.
\begin{lemma}[Proposition 8 from \cite{cowan2018normal}]
\label{lem: chi square}
    If a random variable $X\sim \chi^2_{d}$, then for any $\delta>0$, we have 
    $$
    \P(X > d(1+\delta)) \leq \lb e^{-\delta}(1+\delta)\rb^{d/2}
    $$
\end{lemma}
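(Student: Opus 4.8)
The plan is to prove this tail bound by the standard Chernoff (exponential-moment) argument for $\chi^2$ concentration. I would begin by writing $X = \sum_{i=1}^d Z_i^2$ with $Z_i$ i.i.d. standard normal, so that the moment generating function is available in closed form, $\E[e^{tX}] = (1-2t)^{-d/2}$, valid for every $t < 1/2$. Applying Markov's inequality to the nonnegative random variable $e^{tX}$ then gives, for any $t \in (0, 1/2)$,
\[
\P\lb X > d(1+\delta)\rb \leq e^{-t d(1+\delta)} (1-2t)^{-d/2}.
\]

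The next step is to optimize the free parameter $t$ over the admissible interval $(0,1/2)$ to make the bound as tight as possible. Differentiating the exponent $-t d(1+\delta) - \tfrac{d}{2}\log(1-2t)$ with respect to $t$ and setting the derivative to zero yields the condition $1-2t = \tfrac{1}{1+\delta}$, i.e. $t^\star = \tfrac{1}{2}\cdot\tfrac{\delta}{1+\delta}$. Since $\delta > 0$, this value satisfies $0 < t^\star < 1/2$, so it is a legitimate choice keeping the MGF finite, and a second-derivative (convexity) check confirms it is the minimizer.

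Substituting $t^\star$ back in, the two factors simplify cleanly: the exponential factor becomes $e^{-t^\star d(1+\delta)} = e^{-d\delta/2}$, while $(1-2t^\star)^{-d/2} = (1+\delta)^{d/2}$. Multiplying them gives
\[
\P\lb X > d(1+\delta)\rb \leq e^{-d\delta/2}(1+\delta)^{d/2} = \lb e^{-\delta}(1+\delta)\rb^{d/2},
\]
which is exactly the claimed inequality (reading the exponent in the statement as $d/2$).

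There is no genuine obstacle in this argument — every step is elementary. The only points that require care are verifying that the optimizing $t^\star$ remains strictly below $1/2$ so that the moment generating function stays finite, and performing the algebraic simplification at $t^\star$ correctly; both follow immediately from $\delta > 0$. Since the result is quoted as Proposition~8 of \cite{cowan2018normal}, one could alternatively simply cite it, but the self-contained Chernoff derivation above is short enough to include directly.
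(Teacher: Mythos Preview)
Your Chernoff argument is correct and is exactly the standard derivation of this $\chi^2$ tail bound; the paper does not give its own proof but simply cites the result as Proposition~8 of \cite{cowan2018normal}, so there is nothing further to compare. Your remark that the exponent in the statement should read $d/2$ rather than $k/2$ is also right---that is a typo in the paper.
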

Using Lemma \ref{lem: chi square}, we can prove the tail bound for $\hat\sigma_{R,k,n}^2$ and $\hat\sigma_{\epsilon,k,n}^2$.
\begin{lemma}
    \label{lem: tail sigma}
    For any $\delta>0, n\geq 3$, the following inequalities hold:
    \begin{equation*}
        \begin{aligned}
            &\P(\hat \sigma_{R, k,n}^2 > (1+\delta)\sigma_k^2) \leq  \lb e^{-\delta}(1+\delta)\rb^{\frac{n-1}{2}}, \\
            &\P(\hat \sigma_{\epsilon, k,n}^2 > (1+\delta)(1-\rho_k^2)\sigma_k^2) \leq \lb e^{-\delta}(1+\delta)\rb^{\frac{n-2}{2}}.
        \end{aligned}
    \end{equation*}
\end{lemma}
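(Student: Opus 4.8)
The plan is to derive both tail bounds as direct consequences of the exact chi-squared representations established in the proof of Proposition~\ref{prop: CI mean}, combined with the $\chi^2$ tail inequality of Lemma~\ref{lem: chi square}. Throughout I treat $n_{k,t}$ as a fixed index (these bounds will later be combined with a union bound over the possible numbers of pulls in the regret analysis).

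First I would handle $\hat\sigma_{R,k,t}^2$. By \eqref{eqn: chi sq Rkt} the quantity $(n_{k,t}-1)\hat\sigma_{R,k,t}^2/\sigma_k^2$ follows a $\chi^2_{n_{k,t}-1}$ distribution, so
\[
\P\lb\hat\sigma_{R,k,t}^2 > (1+\delta)\sigma_k^2\rb = \P\lb\frac{(n_{k,t}-1)\hat\sigma_{R,k,t}^2}{\sigma_k^2} > (n_{k,t}-1)(1+\delta)\rb,
\]
and invoking Lemma~\ref{lem: chi square} with $d = n_{k,t}-1$ immediately gives the bound $\lb e^{-\delta}(1+\delta)\rb^{(n_{k,t}-1)/2}$.

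The second bound is the same computation carried out conditionally. From \eqref{eqn: chi sq ekt}, conditional on $\Fkt$ the variable $(n_{k,t}-2)\hat\sigma_{\epsilon,k,t}^2/((1-\rho_k^2)\sigma_k^2)$ is $\chi^2_{n_{k,t}-2}$ distributed, so the same rearrangement together with Lemma~\ref{lem: chi square} applied with $d = n_{k,t}-2$ yields
\[
\P\lb\hat\sigma_{\epsilon,k,t}^2 > (1+\delta)(1-\rho_k^2)\sigma_k^2 \,\big|\, \Fkt\rb \leq \lb e^{-\delta}(1+\delta)\rb^{(n_{k,t}-2)/2}.
\]
Since this upper bound is a deterministic constant for a fixed value of $n_{k,t}$, taking expectations over $\Fkt$ via the tower property removes the conditioning and delivers the claimed unconditional inequality.

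I do not anticipate a genuine obstacle here: the lemma is essentially a bookkeeping corollary of Lemma~\ref{lem: chi square} once the exact chi-squared laws of Proposition~\ref{prop: CI mean} are in hand. The only point deserving a sentence of care is the conditioning in the $\hat\sigma_{\epsilon,k,t}^2$ case — one must observe that the conditional tail bound does not depend on $\Fkt$, so that the averaging step is immediate.
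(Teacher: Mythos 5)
Your proposal is correct and follows essentially the same route as the paper: both reduce the lemma to the exact $\chi^2$ laws \eqref{eqn: chi sq Rkt} and \eqref{eqn: chi sq ekt} and then apply Lemma \ref{lem: chi square}. Your write-up is actually slightly more careful than the paper's one-line proof, since you make explicit the conditioning on $\Fkt$ and the tower-property step needed to obtain the unconditional bound for $\hat\sigma_{\epsilon,k,t}^2$.
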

\begin{proof}
    Since we have proven that $\hat \sigma^2_{R,k,n} \sim \frac{\sigma_k^2}{n-1}\chi_{n-1}^2$ and $\hat\sigma_{\epsilon,k,n}^2|\Fkt \sim \frac{(1-\rho_k^2)\sigma_k^2}{n-2}\chi_{n-2}^2$ in \eqref{eqn: chi sq Rkt} and \eqref{eqn: chi sq ekt}, directly apply Lemma \ref{lem: chi square} on $\hat\sigma_{R,k,n}^2$ and $\hat\sigma_{\epsilon,k,n}^2$ will finish the proof. 
\end{proof}
Similarly, we can prove the tail bound for $Z_{k,n}$. 
\begin{lemma}
    \label{lem: tail Z}
    For any $\delta>0$, if $n\geq 6$, the following inequality holds:
    $$
    \P(Z_{k,n}\geq 1+\delta) \leq  \frac{3}{\delta^2}\frac{1}{(n-3)(n-5)}
    $$
\end{lemma}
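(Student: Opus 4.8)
The plan is to reduce $\Zkt-1$ to (a multiple of) a ratio $W/V$ of two \emph{independent} chi-squared random variables and then control its upper tail via a second-moment (Markov) argument applied to $(W/V)^2$, which is what produces the $1/\delta^2$ factor.

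First I would simplify the numerator of $\Zkt-1$. Since $\E_{k,t}^\all[\hat R]$ is the convex combination $\tfrac{n_{k,t}}{n_{k,t}+N_k}\E_{k,t}[\hat R] + \tfrac{N_k}{n_{k,t}+N_k}\E_k^\off[\hat R]$, a one-line computation gives $\E_{k,t}[\hat R] - \E_{k,t}^\all[\hat R] = \tfrac{N_k}{n_{k,t}+N_k}\lb\E_{k,t}[\hat R] - \E_k^\off[\hat R]\rb$. Under \eqref{eqn: gaussian model} the online surrogates $\{\hat R_{k,s}\}$ and offline surrogates $\{\hat R_{k,s}^\off\}$ are i.i.d. $\N(\tilde\mu_k,\tilde\sigma_k^2)$ and mutually independent, so $\E_{k,t}[\hat R]-\E_k^\off[\hat R]\sim\N\lb 0,\tilde\sigma_k^2(\tfrac{1}{n_{k,t}}+\tfrac{1}{N_k})\rb$; hence $n_{k,t}\lb\E_{k,t}[\hat R]-\E_{k,t}^\all[\hat R]\rb^2 = \tfrac{N_k}{n_{k,t}+N_k}\tilde\sigma_k^2 W$ with $W\sim\chi^2_1$. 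The denominator equals $\sum_{s=1}^{n_{k,t}}(\hat R_{k,s}-\E_{k,t}[\hat R])^2 = \tilde\sigma_k^2 V$ with $V\sim\chi^2_{n_{k,t}-1}$. The one structural point to check is that $W\perp V$: the online sample variance is independent of the online sample mean (classical Gaussian fact) and is trivially independent of the offline data, hence independent of $W$, which depends only on the two sample means. Combining, $\Zkt-1 = \tfrac{N_k}{n_{k,t}+N_k}\,\tfrac{W}{V} \le \tfrac{W}{V}$.

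Then I would bound $\P(W/V\ge\delta)$ by Markov's inequality on $(W/V)^2$. Using independence and the standard moment formulas $\E[W^2]=3$ for $W\sim\chi^2_1$ and $\E[V^{-2}] = \frac{2^{-2}\Gamma(\frac{n_{k,t}-1}{2}-2)}{\Gamma(\frac{n_{k,t}-1}{2})} = \frac{1}{(n_{k,t}-3)(n_{k,t}-5)}$ for $V\sim\chi^2_{n_{k,t}-1}$ — finite precisely when $\tfrac{n_{k,t}-1}{2}-2>0$, i.e. $n_{k,t}\ge 6$ — I get $\E[(W/V)^2] = \frac{3}{(n_{k,t}-3)(n_{k,t}-5)}$, and therefore $\P(\Zkt\ge 1+\delta) \le \P\lb (W/V)^2\ge\delta^2\rb \le \frac{3}{\delta^2(n_{k,t}-3)(n_{k,t}-5)}$, which is the claim.

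Nothing here is genuinely difficult; the only points needing a moment of care are the verification that $W\perp V$ and the evaluation of $\E[V^{-2}]$ via the Gamma integral $\E[V^{-r}]=2^{-r}\Gamma(d/2-r)/\Gamma(d/2)$ with $d=n_{k,t}-1$, which is exactly where the hypothesis $n_{k,t}\ge 6$ is forced. As in the classical UCB analysis, the bound should be read as holding for each fixed value of $n_{k,t}$; since the surrogate-reward sequences are i.i.d. irrespective of the pull schedule, the randomness of $n_{k,t}$ is handled by a union bound later in the regret proof and causes no complication here.
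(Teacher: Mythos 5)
Your proposal is correct and follows essentially the same route as the paper: both identify $Z_{k,t}-1$ as $\tfrac{N_k}{n_{k,t}+N_k}$ times a ratio of a $\chi^2_1$ variable to an independent $\chi^2_{n_{k,t}-1}$ variable (the paper packages this as a scaled $t^2_{n_{k,t}-1}$ and uses $\E[t^4_{n_{k,t}-1}]$, which is the same computation as your $\E[W^2]\E[V^{-2}]$), then apply Markov's inequality to the square and drop the factor $\tfrac{N_k}{n_{k,t}+N_k}\le 1$. Your independence argument via the classical sample-mean/sample-variance fact is an equivalent substitute for the paper's direct covariance calculation, so there is nothing further to flag.
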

\begin{proof}
    Recall that by definition \eqref{eqn: Zkt}, 
    $$
    Z_{k,n} = 1+  \frac{n(\E_{k,n} [\hat R] - \E_{k,n}^\all [\hat R])^2}{\sum_{s=1}^{n} (\hat R_{k,s} - \E_{k,n}[\hat R])^2}.
    $$
    Notice that 
    $$
    \E_{k,n} [\hat R] - \E_{k,n}^\all [\hat R] = \lb\frac{1}{n} - \frac{1}{n+N_{k}}\rb \sum_{s=1}^{n}\hat R_{k,s}- \frac{1}{n+N_{k}}\sum_{s=1}^{N_{k}}\hat R_{k,s}^\off,
    $$
    thus $\E_{k,n} [\hat R] - \E_{k,n}^\all [\hat R]$ is a Gaussian random variable with mean $0$ and variance
    $$
    \var(\E_{k,n} [\hat R] - \E_{k,n}^\all [\hat R]) = \lb\frac{N_k^2}{n(n+N_{k})^2} + \frac{N_k}{(n+N_{k})^2}\rb\tilde\sigma_k^2=\frac{N_k\tilde\sigma_k^2}{n(n+N_{k})}.
    $$
    On the other hand, we know that 
    $$
    \sum_{s=1}^{n} (\hat R_{k,s} - \E_{k,n}[\hat R])^2 \sim \tilde\sigma_k^2\chi_{n-1}^2.
    $$
    Moreover, 
    \begin{equation*}
        \begin{aligned}
        &\cov(\hat R_{k,s} - \E_{k,n}[\hat R], \E_{k,n} [\hat R] - \E_{k,n}^\all [\hat R]) \\
        =&\cov(\hat R_{k,s}, \E_{k,n} [\hat R] - \E_{k,n}^\all [\hat R]) - \cov(\E_{k,n}[\hat R], \E_{k,n} [\hat R] - \E_{k,n}^\all [\hat R])\\
        =&\cov(\hat R_{k,s}, \E_{k,n} [\hat R] - \E_{k,n}^\all [\hat R]) - \frac{1}{n}\sum_{j=1}^{n} \cov(\hat R_{k,j}, \E_{k,n} [\hat R] - \E_{k,n}^\all [\hat R]) = 0.\\
        \end{aligned} 
    \end{equation*}
    Therefore, $(\E_{k,n} [\hat R] - \E_{k,n}^\all [\hat R])^2$ and $\sum_{s=1}^{n} (\hat R_{k,s} - \E_{k,n}[\hat R])^2$ are independent random variables, and thus 
    $$
    Z_{k,n} - 1 = \frac{n(\E_{k,n} [\hat R] - \E_{k,n}^\all [\hat R])^2}{\sum_{s=1}^{n} (\hat R_{k,s} - \E_{k,n}[\hat R])^2} \sim \frac{N_k}{(n+N_{k})(n-1)}t_{n-1}^2. 
    $$
    Using Markov's inequality, we can conclude that 
    \begin{equation*}
        \begin{aligned}
            &\P(Z_{k,n}\geq 1+\delta) = \P\lb\frac{N_k}{(n+N_{k})(n-1)}t_{n-1}^2 \geq \delta \rb\\
            \leq& \frac{1}{\delta^2}\lb\frac{N_k}{(n+N_{k})(n-1)}\rb^2\E[t_{n-1}^4] = \frac{3}{\delta^2}\lb\frac{N_k}{n+N_{k}}\rb^2\frac{1}{(n-3)(n-5)}\\
            \leq & \frac{3}{\delta^2}\frac{1}{(n-3)(n-5)}
        \end{aligned}
    \end{equation*}
    which finishes the proof. 
\end{proof}
\subsection{Proof of regret upper bound}
To analyze the regret, we need a stronger result than Proposition \ref{prop: CI mean} to provide all-time validity of the confidence bound, since the online sample size $n_{k,t}$ is a random variable. We will achieve this through a shifted concentration lemma. For each round $t\ge 4K+1$, define
\[
\delta_t:=\frac1{2t\sqrt{\ln t}},
\quad
q_{k,n}(t):=q_{n-2}(\delta_t),
\]
and the fixed-count index
\[
\hat U_{k,n}(t):=\hat\mu^{\mla}_{k,n}+\hat B_{k,n}(t),
\]
where
\[
\hat B_{k,n}(t):=q_{k,n}(t)\left(\sqrt{\frac{\hat\sigma_{R,k,n}^2}{n+N_k}}+\sqrt{\frac{Z_{k,n}\hat\sigma_{\epsilon,k,n}^2}{n}}\right).
\]
When $n=n_{k,t}$, this is exactly the index used by Algorithm \ref{alg:PPUCB}.
\begin{lemma}
\label{lem:gaussian-shifted-kn}
Fix an arm $k\in[K]$, a deterministic online pull count $n\ge 4$, a round index $t\ge n$, a shift $h>0$.
Then
\begin{align}
\P\bigl(\hat U_{k,n}(t)+h<\mu_k\bigr)
&\le \delta_t\exp\!\left(-\frac{(n+N_k)h^2}{8\sigma_k^2}\right)
+\delta_t\exp\!\left(-\frac{n h^2}{16\sigma_k^2}\right)
+\frac{25\,\delta_t}{n^2}.
\label{eq:gaussian-shifted-kn-bound}
\end{align}
\end{lemma}

\begin{proof}
From Proposition \ref{prop: pp mean distribution}, we can write
\[
\hat\mu^{\mla}_{k,n}-\mu_k=S_{1,k,n}+S_{2,k,n},
\]
where $S_{1,k,n}$ and $S_{2,k,n}$ are independent with distribution
\[
S_{1,k,n}\sim \N\!\left(0,\frac{\rho_k^2\sigma_k^2}{n+N_k}\right),
\quad
S_{2,k,n}\mid \mathcal F_{k,n}\sim \N\!\left(0,\frac{Z_{k,n}(1-\rho_k^2)\sigma_k^2}{n}\right).
\]
Set
\[
A_{k,n}:=\sqrt{\frac{\hat\sigma_{R,k,n}^2}{n+N_k}},
\quad
B_{k,n}:=\sqrt{\frac{Z_{k,n}\hat\sigma_{\epsilon,k,n}^2}{n}}.
\]
Then
\begin{align*}
\{\hat U_{k,n}(t)<\mu_k-h\}
&=\{S_{1,k,n}+S_{2,k,n}<-q_{k,n}(t)(A_{k,n}+B_{k,n})-h\}
\\
&\subseteq \Bigl\{S_{1,k,n}<-q_{k,n}(t)A_{k,n}-\frac h2\Bigr\}
\cup \Bigl\{S_{2,k,n}<-q_{k,n}(t)B_{k,n}-\frac h2\Bigr\}.
\end{align*}
We bound the two terms separately.

\textbf{Step 1: the $S_{1,k,n}$ term.}
If $\rho_k=0$, then $S_{1,k,n}\equiv 0$ and there is nothing to prove.
Assume $\rho_k\neq 0$ and define
\[
\tau_{1,k,n}:=\frac{|\rho_k|\sigma_k}{\sqrt{n+N_k}}.
\]
Since $\hat\sigma_{R,k,n}^2$ is independent of $S_{1,k,n}$, then conditionally on $A_{k,n}$,
\[
\P\Bigl(S_{1,k,n}<-q_{k,n}(t)A_{k,n}-\frac h2\,\Big|\,A_{k,n}\Bigr)
=\Phi\!\bigl(-(x+u)\bigr),
\]
where
\[
x:=\frac{q_{k,n}(t)A_{k,n}}{\tau_{1,k,n}},
\quad
u:=\frac{h}{2\tau_{1,k,n}}.
\]
For any $x,u\ge 0$,
\begin{equation}
\Phi(-(x+u))
=\int_{x+u}^{\infty}\frac{e^{-z^2/2}}{\sqrt{2\pi}}\,dz
\le e^{-u^2/2}\int_x^{\infty}\frac{e^{-uz}e^{-z^2/2}}{\sqrt{2\pi}}\,dz
\le e^{-u^2/2}\Phi(-x).
\label{eq:gaussian-tail-shift-basic}
\end{equation}
Therefore,
\[
\P\Bigl(S_{1,k,n}<-q_{k,n}(t)A_{k,n}-\frac h2\Bigr)
\le e^{-u^2/2}\P\bigl(S_{1,k,n}<-q_{k,n}(t)A_{k,n}\bigr).
\]
Now exactly as in the proof of Proposition \ref{prop: CI mean},
\[
\P\bigl(S_{1,k,n}< -q_{n-1}(\delta_t)|\rho_k|A_{k,n}\bigr)\le \delta_t.
\]
Since $q_{k,n}(t)=q_{n-2}(\delta_t)\ge q_{n-1}(\delta_t)$ and $|\rho_k|\le 1$,
\[
q_{k,n}(t)A_{k,n}\ge q_{n-1}(\delta_t)|\rho_k|A_{k,n},
\]
so
\[
\P\bigl(S_{1,k,n}< -q_{k,n}(t)A_{k,n}\bigr)\le \delta_t.
\]
Finally,
\[
e^{-u^2/2}=\exp\!\left(-\frac{(n+N_k)h^2}{8\rho_k^2\sigma_k^2}\right)
\le \exp\!\left(-\frac{(n+N_k)h^2}{8\sigma_k^2}\right),
\]
which yields
\begin{equation}
\P\Bigl(S_{1,k,n}<-q_{k,n}(t)A_{k,n}-\frac h2\Bigr)
\le \delta_t\exp\!\left(-\frac{(n+N_k)h^2}{8\sigma_k^2}\right).
\label{eq:gaussian-S1-shifted-final}
\end{equation}

\textbf{Step 2: the $S_{2,k,n}$ term.}
Condition on $\mathcal F_{k,n}$.
Define
\[
\tau_{2,k,n}:=\sqrt{\frac{Z_{k,n}(1-\rho_k^2)\sigma_k^2}{n}},
\quad
V_{k,n}:=\sqrt{\frac{\hat\sigma_{\epsilon,k,n}^2}{(1-\rho_k^2)\sigma_k^2}}.
\]
Conditionally on $(\mathcal F_{k,n},V_{k,n})$, the random variable $S_{2,k,n}$ is Gaussian with variance $\tau_{2,k,n}^2$, and $B_{k,n}=\tau_{2,k,n}V_{k,n}$. Hence
\[
\P\Bigl(S_{2,k,n}<-q_{k,n}(t)B_{k,n}-\frac h2\,\Big|\,\mathcal F_{k,n},V_{k,n}\Bigr)
=\Phi\!\bigl(-(x+u)\bigr),
\]
where
\[
x:=q_{k,n}(t)V_{k,n},
\quad
u:=\frac{h}{2\tau_{2,k,n}}=\frac{h\sqrt n}{2\sigma_k\sqrt{Z_{k,n}(1-\rho_k^2)}}.
\]
Applying \eqref{eq:gaussian-tail-shift-basic} once again gives
\[
\Phi\!\bigl(-(x+u)\bigr)\le e^{-u^2/2}\Phi(-x).
\]
Taking conditional expectation with respect to $V_{k,n}$ and using the same $t$-distribution argument as in Proposition \ref{prop: CI mean} yields
\begin{align}
\P\Bigl(S_{2,k,n}<-q_{k,n}(t)B_{k,n}-\frac h2\,\Big|\,\mathcal F_{k,n}\Bigr)
&\le \delta_t\exp\!\left(-\frac{n h^2}{8Z_{k,n}(1-\rho_k^2)\sigma_k^2}\right)
\nonumber\\
&\le \delta_t\exp\!\left(-\frac{n h^2}{8Z_{k,n}\sigma_k^2}\right).
\label{eq:gaussian-S2-cond}
\end{align}
Now split according to $\{Z_{k,n}\le 2\}$ and $\{Z_{k,n}>2\}$.
On $\{Z_{k,n}\le 2\}$, \eqref{eq:gaussian-S2-cond} gives
\[
\P\Bigl(S_{2,k,n}<-q_{k,n}(t)B_{k,n}-\frac h2,\ Z_{k,n}\le 2\Bigr)
\le \delta_t\exp\!\left(-\frac{n h^2}{16\sigma_k^2}\right).
\]
On $\{Z_{k,n}>2\}$, the conditional probability is at most $\delta_t$, hence
\[
\P\Bigl(S_{2,k,n}<-q_{k,n}(t)B_{k,n}-\frac h2,\ Z_{k,n}>2\Bigr)
\le \delta_t\P(Z_{k,n}>2).
\]
By Lemma \ref{lem: tail Z}, for $n\ge 6$,
\[
\P(Z_{k,n}>2)\le \frac{3}{(n-3)(n-5)}\le \frac{25}{n^2}.
\]
For $n=4,5$, the bound $\P(Z_{k,n}>2)\le 1\le 25/n^2$ is trivial. Therefore, for all $n\ge 4$,
\begin{equation}
\P\Bigl(S_{2,k,n}<-q_{k,n}(t)B_{k,n}-\frac h2\Bigr)
\le \delta_t\exp\!\left(-\frac{n h^2}{16\sigma_k^2}\right)+\frac{25\,\delta_t}{n^2}.
\label{eq:gaussian-S2-shifted-final}
\end{equation}
Combining \eqref{eq:gaussian-S1-shifted-final} and \eqref{eq:gaussian-S2-shifted-final} proves the lemma.
\end{proof}

Now we are ready to prove the regret bound of the MLA-UCB algorithm in Theorem \ref{thm: regret bound}. We will start with the following generalized version of regret bound. 
\begin{theorem}
\label{thm: MLA-UCB}
Under the Gaussian data generation model \eqref{eqn: gaussian model}, for any $\epsilon\in(0,1)$ and $T\geq 4K$, if the sample size of offline predictions satisfies 
\begin{equation}
    N_k\geq \frac{1}{\delta_k}\lb\frac{2\ln T}{\ln\lb 1+\frac{\Delta_k^2}{4\sigma_{k}^2} \frac{(1-\epsilon)^2}{1+\epsilon}\rb}+3\rb, \forall k \neq k^\star
\end{equation}
for some $\delta_k<1$, then the expected regret of Algorithm \ref{alg:PPUCB} can be bounded by:
\begin{equation}
\label{eqn: regret bound}
    \begin{split}
        \E[\reg] \leq \sum_{k\neq k^\star}\lb\frac{2\ln T}{\ln\lb 1+\frac{\Delta_k^2}{\sigma_{k}^2}\frac{(1-\epsilon)^2}{(1+\epsilon)}\frac{1}{(\sqrt{1-\rho_k^2}+\sqrt{\delta_k})^2}\rb}+C\lb
1+\frac{\sigma_{k^\star}^2}{\epsilon^2\Delta_k^2}\rb\sqrt{\ln T}+\frac{C}{\epsilon^2}\lb1+\frac{\sigma_{k}^2}{\Delta_k^2}\rb\rb\Delta_k,
    \end{split}
\end{equation}
where $C>0$ is a universal constant.
\end{theorem}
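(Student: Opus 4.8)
The plan is to run the classical ``count the pulls of each suboptimal arm'' analysis for UCB-type algorithms, now with $\mukt$ in place of the sample mean: its exact finite-sample law comes from Proposition~\ref{prop: pp mean distribution} and Corollary~\ref{cor: Skt}, the coverage of the matching upper confidence bound from Proposition~\ref{prop: CI mean}, the t-quantile is calibrated via Proposition~\ref{prop: quantile bound}, and the plug-in variances are controlled by Lemmas~\ref{lem: tail sigma} and~\ref{lem: tail Z}. Since $\reg=\sum_{k\neq k^\star}\Delta_k\,n_{k,T+1}$, I would reduce everything to bounding $\E[n_{k,T+1}]$ for each suboptimal $k$ by the bracketed quantity in \eqref{eqn: regret bound}.

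Fix a suboptimal $k$; write $\delta_t=\tfrac{1}{2t\sqrt{\log t}}$ for the round-$t$ significance level, $w_{k,t}=q_{n_{k,t}-2}(\delta_t)\big(\sqrt{\hat\sigma_{R,k,t}^2/(n_{k,t}+N_k)}+\sqrt{Z_{k,t}\hat\sigma_{\epsilon,k,t}^2/n_{k,t}}\big)$ for the confidence radius of Algorithm~\ref{alg:PPUCB}, $c_k=\tfrac{(1-\epsilon)^2}{1+\epsilon}\cdot\tfrac{\Delta_k^2}{\sigma_k^2}\cdot\tfrac{1}{(\sqrt{1-\rho_k^2}+\sqrt{\delta_k})^2}$, and $u_k=\lceil 2\log T/\log(1+c_k)\rceil+3$. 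I would introduce the benign-variance event $\mathcal G_{k,t}=\{\hat\sigma_{R,k,t}^2\le(1+\epsilon)\sigma_k^2\}\cap\{\hat\sigma_{\epsilon,k,t}^2\le\sqrt{1+\epsilon}\,(1-\rho_k^2)\sigma_k^2\}\cap\{Z_{k,t}\le\sqrt{1+\epsilon}\}$ and first establish the deterministic implication: on $\mathcal G_{k,t}$ with $n_{k,t}\ge u_k$ and $t\le T$, one has $w_{k,t}\le(1-\epsilon)\Delta_k$. This is where the sample-size hypothesis \eqref{eqn: sample size condition} enters: it forces $N_k\ge u_k/\delta_k$ (comparing the two logarithms uses only $(\sqrt{1-\rho_k^2}+\sqrt{\delta_k})^2\le4\le24$), so $1/(n_{k,t}+N_k)\le\delta_k/n_{k,t}$ for $n_{k,t}\le u_k$; on $\mathcal G_{k,t}$ the two terms of the radius are then at most $\sqrt{1+\epsilon}\,\sigma_k\sqrt{\delta_k/n_{k,t}}$ and $\sqrt{1+\epsilon}\,\sigma_k\sqrt{(1-\rho_k^2)/n_{k,t}}$ (splitting the $1+\epsilon$ slack as $\sqrt{1+\epsilon}\cdot\sqrt{1+\epsilon}$ across the $Z_{k,t}$- and $\hat\sigma_{\epsilon,k,t}^2$-bounds is what keeps only one power of $1+\epsilon$); bounding the quantile by $\sqrt{(n_{k,t}-2)(t^{2/(n_{k,t}-3)}-1)}$ via Proposition~\ref{prop: quantile bound}, the claim $w_{k,t}\le(1-\epsilon)\Delta_k$ reduces to $t^{2/(n_{k,t}-3)}\le1+c_k\tfrac{n_{k,t}}{n_{k,t}-2}$, which holds for $n_{k,t}\ge u_k$, $t\le T$.

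The next step is the event decomposition. For $t>4K$, $\{A_t=k,\,n_{k,t}\ge u_k\}$ forces $\mukt+w_{k,t}\ge\hat\mu^{\mla}_{k^\star,t}+w_{k^\star,t}$, so it lies in the union of (i) $\{\hat\mu^{\mla}_{k^\star,t}+w_{k^\star,t}<\mu^\star\}$, (ii) $\mathcal G_{k,t}^{c}$, and (iii) $\{Z_{k,t}\le\sqrt{1+\epsilon}\}\cap\{\mukt-\mu_k\ge\epsilon\Delta_k\}$ --- since on $(\mathrm i)^c\cap(\mathrm{ii})^c$ we get $\mukt+w_{k,t}\ge\mu^\star$ and $\mathcal G_{k,t}$, whence $\mukt-\mu_k\ge\Delta_k-w_{k,t}\ge\epsilon\Delta_k$ by the implication just proved. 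I would then sum over $t\le T$: Proposition~\ref{prop: CI mean} bounds the probability of (i) at round $t$ by $2\delta_t$, and $\sum_{t=4K+1}^{T}2\delta_t=\sum_t\tfrac{1}{t\sqrt{\log t}}\le2\sqrt{\log T}$ by the integral test; for (ii) and (iii), using that for each $n$ there is at most one round with $A_t=k$ and $n_{k,t}=n$ (and that conditionally on it the per-arm statistics keep the laws of Section~\ref{sec: pp mean}), these collapse to $\sum_{n\ge u_k}\P(\cdot\mid n_{k,t}=n)$. For (ii), Lemmas~\ref{lem: tail sigma} and~\ref{lem: tail Z} bound $\P(\mathcal G_{k,t}^c\mid n_{k,t}=n)$ by $O(1)(e^{-\epsilon'}(1+\epsilon'))^{(n-2)/2}+3/((\epsilon')^2(n-3)(n-5))$ with $\epsilon'\asymp\epsilon$, so the geometric plus telescoping sums over $n\ge u_k\ge6$ contribute a term $\le125/\epsilon^2$; for (iii), Corollary~\ref{cor: Skt} with $Z_{k,t}\le\sqrt{1+\epsilon}$ yields $\mukt-\mu_k\mid\mathcal S_{k,t}\sim\N(0,v)$ with $v\le(1+\epsilon)\sigma_k^2/n_{k,t}$, so the conditional probability is $\le\exp(-\epsilon^2\Delta_k^2 n/(2(1+\epsilon)\sigma_k^2))$, whose sum over $n\ge u_k$ is $\le2(1+\epsilon)\sigma_k^2/(\epsilon^2\Delta_k^2)+1$. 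Together with the at most $u_k-4$ rounds in which $k$ is pulled with $n_{k,t}<u_k$ and the four initialization pulls, this gives $\E[n_{k,T+1}]\le u_k+2\sqrt{\log T}+125/\epsilon^2+2(1+\epsilon)\sigma_k^2/(\epsilon^2\Delta_k^2)+O(1)$, and $u_k\le 2\log T/\log(1+c_k)+4$, with $\log(1+c_k)$ equal to the denominator in \eqref{eqn: regret bound}, yields exactly the bracket of \eqref{eqn: regret bound} once the $O(1)$ contributions are combined into the $+4$.

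I expect the main obstacles to be, first, the $\epsilon$-bookkeeping: the single slack $\epsilon$ has to be parcelled out among the tail bounds for $\hat\sigma_{R,k,t}^2$, $\hat\sigma_{\epsilon,k,t}^2$ and $Z_{k,t}$ so that the multiplier of $\Delta_k^2/\sigma_k^2$ in the leading logarithm ends up as exactly $(1-\epsilon)^2/(1+\epsilon)\cdot(\sqrt{1-\rho_k^2}+\sqrt{\delta_k})^{-2}$, and one must check that \eqref{eqn: sample size condition} really forces $N_k\ge u_k/\delta_k$ --- this is what converts the $S_1$-variance $\rho_k^2\sigma_k^2/(n_{k,t}+N_k)$ and the first width term into the $\sqrt{\delta_k}$ contribution. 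Second, a measure-theoretic point: one must justify that on the event ``arm $k$ is pulled for the $n$-th time'' the quantities $\mukt-\mu_k$, $\hat\sigma_{\epsilon,k,t}^2$, $\hat\sigma_{R,k,t}^2$ and $Z_{k,t}$ still obey the (conditional) distributions from Proposition~\ref{prop: pp mean distribution}, Corollary~\ref{cor: Skt}, \eqref{eqn: chi sq Rkt}, \eqref{eqn: chi sq ekt} and Lemma~\ref{lem: tail Z}; this rests on the i.i.d.\ structure of the reward pairs for arm $k$ and the adapted nature of $n_{k,t}$, but should be argued explicitly. Finally, Theorem~\ref{thm: regret bound} would follow from \eqref{eqn: regret bound} by taking $\epsilon=(\log T)^{-1/3}$, so that $(1-\epsilon)^2/(1+\epsilon)\to1$ while $125/\epsilon^2$, $2(1+\epsilon)\sigma_k^2/(\epsilon^2\Delta_k^2)$ and $2\sqrt{\log T}$ are all $O((\log T)^{2/3})$.
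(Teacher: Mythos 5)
Your proposal is correct and follows essentially the same route as the paper's proof: the same decomposition of the pulls of a suboptimal arm into (optimal-arm coverage failure, variance-event failure, upward deviation of $\mukt$, and the deterministic count of early pulls), the same use of Propositions \ref{prop: CI mean} and \ref{prop: quantile bound}, Corollary \ref{cor: Skt}, and Lemmas \ref{lem: tail sigma} and \ref{lem: tail Z}, and the same resulting constants. The only step to spell out more carefully is your deterministic claim that $w_{k,t}\le(1-\epsilon)\Delta_k$ for \emph{all} $n_{k,t}\ge u_k$ on the benign event: the inequality $n_{k,t}/(n_{k,t}+N_k)\le\delta_k$ that you invoke is only available while $n_{k,t}$ stays below roughly $\delta_k N_k$, and for larger $n_{k,t}$ one must fall back on the crude bound $\sqrt{1-\rho_k^2}+\sqrt{n_{k,t}/(n_{k,t}+N_k)}\le 2$ — this is precisely the role of the paper's two-stage containment \eqref{eqn: control sample ratio} and the reason the sample-size condition carries the absolute constant in the logarithm rather than the refined one.
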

\begin{proof}[Proof of Theorem \ref{thm: MLA-UCB}]
    For any $\epsilon\in(0,1)$, define $\tilde \epsilon_k = {\Delta_k\epsilon}$. Define the following events 
    \begin{equation}
    \begin{aligned}
        \mathcal{A}_{k,t} =& \{A_{t} = k\} \subset \{\mukntk +\hat B_{k,n_{k,t}}(t)\geq  \hat \mu_{k^\star,n_{k^\star,t}}^\mla +\hat B_{k^\star,n_{k^\star,t}}(t) \}, \\
        \mathcal{B}_{k,t} =& \{\mu_{k} +\hat B_{k,n_{k,t}}(t) + \tilde \epsilon_k\geq\mu^\star\} =  \{\Delta_k (1-\epsilon) \leq \hat B_{k,n_{k,t}}(t)\}, \\
        \mathcal{D}_{k,t} =& \{\mukntk \leq \mu_k+\tilde \epsilon_k / 2 \},\\ 
        \mathcal{E}_{k,n}^1 = &\{ \hat\sigma_{R, k,n}^2 \leq (1+\epsilon)\sigma_k^2 \}, \\ 
        \mathcal{E}_{k,n}^2 = &\{ \hat\sigma_{\epsilon, k, n}^2 \leq (1+\epsilon/3)(1-\rho_k^2)\sigma_k^2 \}, \\
        \mathcal{E}_{k,n}^3 = &\{Z_{k,n} \leq (1+\epsilon/3)\}, \\
        \mathcal{E}_{k,t} = & \cap_{i=1}^3\mathcal{E}_{k,n_{k,t}}^i. 
    \end{aligned}
    \end{equation}
    Then we can express the expected regret as 
    \begin{equation}
        \E[\operatorname{Reg_T}] = \E\sum_{k\neq k^\star}n_{k,T}\Delta_k = \sum_{k\neq k^\star}\Delta_k \E\sum_{t=1}^T \I_{\mathcal{A}_{k,t}}. 
    \end{equation}
    Furthermore, we can decompose the expectation as follows
    \begin{equation}
    \label{eqn: regret decomposition unknown var}
    \begin{aligned}
        \E\sum_{t=1}^T\I_{\mathcal{A}_{k,t}} =& \E\sum_{t=4K}^T\I_{\mathcal{A}_{k,t}\cap \mathcal{B}_{k,t}\cap \mathcal{D}_{k,t}\cap \mathcal{E}_{k,t} } +\E\sum_{t=4K}^T\I_{\mathcal{A}_{k,t}\cap \mathcal{B}_{k,t}^C\cap \mathcal{D}_{k,t}\cap \mathcal{E}_{k,t} } \\
        &+ \E\sum_{t=4K}^T\I_{\mathcal{A}_{k,t}\cap \mathcal{D}_{k,t}^C\cap \mathcal{E}_{k,t} }+ \E\sum_{t=4K}^T\I_{\mathcal{A}_{k,t}\cap \mathcal{E}_{k,t}^C } + 4,
    \end{aligned}
    \end{equation}
    and we are going to bound these terms separately in the following analysis. For the first term of \eqref{eqn: regret decomposition unknown var},
    \begin{equation}
    \begin{aligned}
    \label{eqn: unknown var term 1}
        &\E\sum_{t=4K}^T\I_{\mathcal{A}_{k,t}\cap \mathcal{B}_{k,t}\cap \mathcal{D}_{k,t}\cap \mathcal{E}_{k,t} } \overset{(1)}{\le} \E\sum_{t=4K}^T\I_{ \mathcal{A}_{k,t} }\I_{ \mathcal{E}_{k,t} } \I\{\Delta_k (1-\epsilon) \leq \hat B_{k,n_{k,t}}(t)\} \\
        \overset{(2)}{=} & \E\sum_{t=4K}^T\I_{ \mathcal{A}_{k,t} }\I_{ \mathcal{E}_{k,t} } \I\left\{\Delta_k (1-\epsilon) \leq  q_{n_{k,t}-2}\lb\frac{1}{2t\sqrt{\ln t}}\rb \lb\sqrt\frac{\hat\sigma_{\epsilon, k, n_{k,t}}^2Z_{k,n_{k,t}}}{n_{k,t}}+\sqrt\frac{\hat\sigma_{R, k,n_{k,t}}^2}{n_{k,t}+N_k}\rb \right\} \\ 
        \overset{(3)}{\le} & \E\sum_{t=4K}^T\I_{ \mathcal{A}_{k,t} }\I_{ \mathcal{E}_{k,t} } \I\left\{\Delta_k (1-\epsilon) \leq  q_{n_{k,t}-2}\lb\frac{1}{2t\sqrt{\ln t}}\rb \sqrt{1+\epsilon}\lb\sqrt\frac{(1-\rho_k^2)\sigma_{k}^2}{n_{k,t}}+\sqrt\frac{\sigma_{k}^2}{n_{k,t}+N_k}\rb \right\} \\
        \overset{(4)}{\le}& \E\sum_{t=4K}^T\I_{ \mathcal{A}_{k,t} }\I_{ \mathcal{E}_{k,t} } \I\left\{\Delta_k (1-\epsilon) \leq  \sqrt{(1+\epsilon)(t^{\frac{2}{n_{k,t}-3}}-1)}\lb\sqrt{(1-\rho_k^2)\sigma_{k}^2}+\sqrt\frac{\sigma_{k}^2n_{k,t}}{n_{k,t}+N_k}\rb \right\} \\
        \overset{(5)}{\le} & \E\sum_{t=4K}^T\I_{ \mathcal{A}_{k,t} }\I\left\{\Delta_k (1-\epsilon) \leq  \sqrt{(1+\epsilon)(t^{\frac{2}{n_{k,t}-3}}-1)}\lb\sqrt{(1-\rho_k^2)\sigma_{k}^2}+\sqrt\frac{\sigma_{k}^2n_{k,t}}{n_{k,t}+N_k}\rb \right\}.
    \end{aligned}
    \end{equation}
    Here, the inequality (1) is from the definition of $\mathcal{B}_{k,t}$ and the fact that $\I_{ \mathcal{D}_{k,t} }\leq 1$; the equality (2) is from the definition of $\hat B_{k,n}(t)$; the inequality (3) is from the definition of $\mathcal{E}_{k,t}$ and the fact that $(1+\epsilon/3)^2\leq1+\epsilon, \forall\epsilon\in (0,1)$; the inequality (4) is from Proposition \ref{prop: quantile bound}; and the inequality (5) uses the fact that $\I_{ \mathcal{E}_{k,t} }\leq 1$.

    Define the event $$
    \mathcal{G}_{k,t} = \left\{\Delta_k (1-\epsilon) \leq  \sqrt{(1+\epsilon)(t^{\frac{2}{n_{k,t}-3}}-1)}\lb\sqrt{(1-\rho_k^2)\sigma_{k}^2}+\sqrt\frac{\sigma_{k}^2n_{k,t}}{n_{k,t}+N_k}\rb \right\}.
    $$
    Then under the sample size condition 
    $$N_k\geq \frac{1}{\delta_k}\lb\frac{2\ln T}{\ln\lb 1+\frac{\Delta_k^2}{4\sigma_{k}^2} \frac{(1-\epsilon)^2}{1+\epsilon}\rb}+3\rb, $$
    we can control the ratio of online and offline samples on the event $\mathcal{G}_{k,t}$ by
    \begin{equation}
    \begin{aligned}
    \label{eqn: control sample ratio}
        \mathcal{G}_{k,t} \subseteq& \left\{\Delta_k (1-\epsilon) \leq  \sqrt{(1+\epsilon)(t^{\frac{2}{n_{k,t}-3}}-1)}2\sigma_{k} \right\} 
        \subseteq \left\{\ln\lb 1+\frac{\Delta_k^2}{4\sigma_{k}^2} \frac{(1-\epsilon)^2}{1+\epsilon}\rb \leq  \frac{2\ln t}{n_{k,t}-3} \right\}\\
        \subseteq& \left\{n_{k,t}\leq   \frac{2\ln T}{\ln\lb 1+\frac{\Delta_k^2}{4\sigma_{k}^2} \frac{(1-\epsilon)^2}{1+\epsilon}\rb}+3 \right\}\subseteq \left\{\frac{n_{k,t}}{n_{k,t}+N_k}\leq \delta_k\right\}.
    \end{aligned}
    \end{equation}
   Combine it with \eqref{eqn: unknown var term 1}, we have 
    \begin{equation}
        \begin{aligned}
            &\E\sum_{t=4K}^T\I_{\mathcal{A}_{k,t}\cap \mathcal{B}_{k,t}\cap \mathcal{D}_{k,t}\cap \mathcal{E}_{k,t} } \overset{(1)}{\le}  \E\sum_{t=4K}^T\I_{ \mathcal{A}_{k,t} }\I_{ \mathcal{G}_{k,t} } \overset{(2)}{=}  \E\sum_{t=4K}^T\I_{ \mathcal{A}_{k,t} }\I_{ \mathcal{G}_{k,t} }\I\left\{\frac{n_{k,t}}{n_{k,t}+N_k}\leq \delta_k\right\}\\
            \overset{(3)}{\le} &\E\sum_{t=4K}^T\I_{ \mathcal{A}_{k,t} }\I\left\{\frac{n_{k,t}}{n_{k,t}+N_k}\leq \delta_k\right\}\I\left\{\Delta_k (1-\epsilon) \leq  \sqrt{(1+\epsilon)(t^{\frac{2}{n_{k,t}-3}}-1)}\lb\sqrt{(1-\rho_k^2)\sigma_{k}^2}+\sqrt{\sigma_{k}^2\delta_k}\rb \right\}\\
            \overset{(4)}{\le}  &\E\sum_{t=4K}^T\I_{ \mathcal{A}_{k,t} } \I\left\{n_{k,t}\leq   \frac{2\ln T}{\ln\lb 1+\frac{\Delta_k^2}{\sigma_{k}^2} \frac{(1-\epsilon)^2}{1+\epsilon}\frac{1}{(\sqrt{1-\rho_k^2}+\sqrt{\delta_k})^2}\rb}+3 \right\} \\ \overset{(5)}{\le}&\E\sum_{n=4}^\infty \I\left\{n\leq   \frac{2\ln T}{\ln\lb 1+\frac{\Delta_k^2}{\sigma_{k}^2} \frac{(1-\epsilon)^2}{1+\epsilon}\frac{1}{(\sqrt{1-\rho_k^2}+\sqrt{\delta_k})^2}\rb}+3 \right\}  = \frac{2\ln T}{\ln\lb 1+\frac{\Delta_k^2}{\sigma_{k}^2} \frac{(1-\epsilon)^2}{1+\epsilon}\frac{1}{(\sqrt{1-\rho_k^2}+\sqrt{\delta_k})^2}\rb}+3
        \end{aligned}
    \end{equation}
    Here, the inequality (1) is by \eqref{eqn: unknown var term 1} and the definition of $\mathcal{G}_{k,t}$; equality (2) is by \eqref{eqn: control sample ratio}; inequality (3) and (4) are straightforward algebra; inequality (5) uses the fact that $\mathcal{A}_{k,t} = \{n_{k,t+1} = n_{k,t}+1\}$ and $ \I\left\{n_{k,t}\leq   \frac{2\ln T}{\ln\lb 1+\frac{\Delta_k^2}{\sigma_{k}^2} \frac{(1-\epsilon)^2}{1+\epsilon}\frac{1}{(\sqrt{1-\rho_k^2}+\sqrt{\delta_k})^2}\rb}+3 \right\}$ depends on $t$ only through $n_{k,t}$, thus we can transform the sum over $t$ into the sum over $n$.

    For the second term of \eqref{eqn: regret decomposition unknown var}, 
    \begin{equation}
    \begin{aligned}
        &\E\sum_{t=4K}^T\I_{\mathcal{A}_{k,t}\cap \mathcal{B}_{k,t}^C\cap \mathcal{D}_{k,t}\cap \mathcal{E}_{k,t} }\leq \E\sum_{t=4K}^T\I_{\mathcal{A}_{k,t}\cap \mathcal{B}_{k,t}^C\cap \mathcal{D}_{k,t} } \\ 
        \leq& \sum_{t=4K}^T\P\lb \mukntk +\hat B_{k,n_{k,t}}(t)\geq  \hat \mu_{k^\star,n_{k^\star, t}}^\mla +\hat B_{k^\star,n_{k^\star, t}}(t), \quad\mu_{k} +\hat B_{k,n_{k,t}}(t) + \tilde \epsilon_k<\mu^\star, \quad \mukntk \leq \mu_k+\tilde \epsilon_k/2\rb \\
        \leq& \sum_{t=4K}^T\P\left(\mu^\star > \hat \mu_{k^\star,n_{k^\star,t}}^\mla + \hat B_{k^\star,n_{k^\star,t}}(t) + \tilde \epsilon_k/2\right)\leq \sum_{t=4K}^T\sum_{n=1}^T\P\left(\mu^\star > \hat U_{k^\star,n}(t) + \tilde \epsilon_{k}/2\right) \\
        \overset{(1)}{\le}& \sum_{t=4K}^T\frac{1}{2t\sqrt{\ln t}}\sum_{n=1}^{T}\lb\exp\!\left(-\frac{(n+N_{k^\star})\tilde\epsilon_k^2}{32\sigma_{k^\star}^2}\right)
+\exp\!\left(-\frac{n \tilde\epsilon_k^2}{64\sigma_{k^\star}^2}\right)
+\frac{25}{n^2}\rb \\
\leq&C\sum_{t=4K}^T\frac{1}{t\sqrt{\ln t}}\lb
1+\frac{\sigma_{k^\star}^2}{\epsilon^2\Delta_k^2}\rb
\leq C\lb
1+\frac{\sigma_{k^\star}^2}{\epsilon^2\Delta_k^2}\rb\int_{t = 3}^T\frac{dt}{t\sqrt{\ln t}} \leq C \lb
1+\frac{\sigma_{k^\star}^2}{\epsilon^2\Delta_k^2}\rb\sqrt{\ln T}.
    \end{aligned}
    \end{equation}
    Here, $C>0$ is a universal constant, and the inequality (1) is from the definition of confidence band $\hat B_{k,n_{k,t}}(t)$ and Lemma \ref{lem:gaussian-shifted-kn}.

    For the third term of \eqref{eqn: regret decomposition unknown var}, 
    \begin{equation}
    \begin{aligned}
        &\E\sum_{t=4K}^T\I_{\mathcal{A}_{k,t}\cap \mathcal{D}_{k,t}^C\cap \mathcal{E}_{k,t} } \leq \E\sum_{t=4K}^T\I_{ \mathcal{A}_{k,t} }\I_{ \mathcal{E}_{k,n_{k,t}}^3 }\I\{ \mukntk> \mu_k+\tilde \epsilon_k/2 \} \\
        \overset{(1)}{\leq}& \E\sum_{n=4}^\infty\I_{ \mathcal{E}_{k,n}^3 }\I\{ \mukt> \mu_k+\tilde \epsilon_k/2 \} \\
        \overset{(2)}{\leq} & \sum_{n=4}^\infty\E\left[\I_{ \mathcal{E}_{k,n}^3 } \P\lb\mukt> \mu_k+\tilde \epsilon_k/2|\mathcal{S}_{k,n} \rb \right]\\
        \overset{(3)}{\leq} &\sum_{n=4}^\infty\E\left[\I_{ \mathcal{E}_{k,n}^3 } \exp\lb-\frac{\Delta_k^2\epsilon^2}{8(\frac{1}{n+N_k}\rho_k^2\sigma_{k}^2 + \frac{1}{n}Z_{k,n} (1-\rho_k^2)\sigma_{k}^2)}\rb\right] \\
        \overset{(4)}{\leq}& \sum_{n=4}^\infty\E\left[\I_{ \mathcal{E}_{k,n}^3 } \exp\lb-\frac{n\Delta_k^2\epsilon^2}{8(1+\epsilon)\sigma_{k}^2}\rb\right] \leq \sum_{n=4}^\infty \exp\lb-\frac{n\Delta_k^2\epsilon^2}{8(1+\epsilon)\sigma_{k}^2}\rb \\
        \leq & \frac{1}{\exp\lb\frac{\Delta_k^2\epsilon^2}{8(1+\epsilon)\sigma_{k}^2}\rb - 1} \leq \frac{C\sigma_{k}^2}{\epsilon^2\Delta_k^2}< \infty.
    \end{aligned}
    \end{equation}
    Here, $C>0$ is a universal constant, inequality (1) is from the fact that $\mathcal{A}_{k,t} = \{n_{k,t+1} = n_{k,t}+1\}$; inequality (2) is from the fact that $Z_{k,n}\in \mathcal{S}_{k,n}$ in Corollary \ref{cor: Skt}; inequality (3) is from Corollary \ref{cor: Skt} and Hoeffding's inequality of Gaussian random variable; inequality (4) is from the definition of $\mathcal{E}_{k,t}^3$.
    
    For the fourth term of \eqref{eqn: regret decomposition unknown var}, 
    \begin{equation}
    \begin{aligned}
        &\E\sum_{t=4K}^T\I_{ \mathcal{A}_{k,t} }\I_{ \mathcal{E}_{k,t}^C } \overset{(1)}{\leq} \E\sum_{n=4}^\infty\I_{ (\mathcal{E}_{k,n}^{1})^C }+ \I_{ (\mathcal{E}_{k,n}^{2})^C }+ \I_{ (\mathcal{E}_{k,n}^{3})^C } \\
        \overset{(2)}{\leq} & \sum_{n=4}^\infty\lb\P(\hat\sigma_{R, k,n}^2 > (1+\epsilon)\sigma_k^2)+\P(\hat\sigma_{\epsilon, k, n}^2 > (1+\epsilon/3)(1-\rho_k^2)\sigma_k^2)+\P(Z_{k,n} > (1+\epsilon/3))\rb\\
        \overset{(3)}{\leq} &\sum_{n=4}^\infty(e^{-\epsilon/3}(1+\epsilon/3))^{\frac{n-2}{2}}+(e^{-\epsilon}(1+\epsilon))^{\frac{n-1}{2}}+ 2+\sum_{n=6}^\infty\frac{27}{\epsilon^2}\frac{1}{(n-3)(n-5)} \\
        \overset{(4)}{\leq}& \frac{1}{\sqrt{\frac{e^{\epsilon/3}}{1+\epsilon/3}}-1} + \frac{1}{\sqrt{\frac{e^{\epsilon}}{1+\epsilon}}-1} +2+ \frac{27}{\epsilon^2}\frac{\pi^2}{6} \overset{(5)}{\leq} \frac{C}{\epsilon^2}.
    \end{aligned}
    \end{equation}
    Here, $C>0$ is a universal constant, inequality (1) is from the fact that $\mathcal{A}_{k,t} = \{n_{k,t+1} = n_{k,t}+1\}$ and $\mathcal{E}_{k,t} = \mathcal{E}_{k,t'}$ for any round $t, t'$ such that  $n_{k,t} = n_{k,t'}$;  inequality (2) is by the definition of $\mathcal{E}_{k,n}^1, \mathcal{E}_{k,n}^2, \mathcal{E}_{k,n}^3$; inequality (3) is from Lemma \ref{lem: tail sigma} and \ref{lem: tail Z}; inequality (4) is straightforward algebra; inequality (5) uses the fact that $\frac{e^x}{1+x}\geq (1+\frac{x^2}{8})^2$.

    Combining all four terms together, we obtain that 
    \begin{equation}
    \label{eqn: regret bound appendix}
        \E[\reg] \leq \sum_{k\neq k^\star}\lb\frac{2\ln T}{\ln\lb 1+\frac{\Delta_k^2}{\sigma_{k}^2}\frac{(1-\epsilon)^2}{(1+\epsilon)}\frac{1}{(\sqrt{1-\rho_k^2}+\sqrt{\delta_k})^2}\rb}+C\lb
1+\frac{\sigma_{k^\star}^2}{\epsilon^2\Delta_k^2}\rb\sqrt{\ln T}+\frac{C}{\epsilon^2}\lb1+\frac{\sigma_{k}^2}{\Delta_k^2}\rb\rb\Delta_k,
    \end{equation}
    which finishes the proof.
\end{proof}
Furthermore, we can remove the dependency on $\epsilon$ in Theorem \ref{thm: MLA-UCB} using  the following technical lemma: 
\begin{lemma}[Proposition 10 from \cite{cowan2018normal}]
\label{lem: log expansion}
    For any $G>0, \epsilon\in [0,\frac{1}{2}]$, the following holds: 
    \begin{equation}
        \frac{1}{\ln\lb1+G\frac{(1-\epsilon)^2}{1+\epsilon}\rb} \leq \frac{1}{\ln(1+G)} + \frac{10G}{(1+G)(\ln (1+G))^2}\epsilon
    \end{equation}
\end{lemma}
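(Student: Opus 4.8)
The plan is to fix $G>0$ and reduce the claim to a convexity property of one scalar function. Set $g(\epsilon)=\frac{(1-\epsilon)^2}{1+\epsilon}$ and $F(\epsilon)=\frac{1}{\log(1+Gg(\epsilon))}$ for $\epsilon\in[0,\tfrac12]$, so that the asserted inequality reads $F(\epsilon)\le F(0)+\frac{10G}{(1+G)(\log(1+G))^2}\,\epsilon$. The main structural fact I would establish is that $F$ is convex on $[0,\tfrac12]$. Granting this, for any $\epsilon\in[0,\tfrac12]$ the identity $\epsilon=(1-2\epsilon)\cdot 0+2\epsilon\cdot\tfrac12$ exhibits $\epsilon$ as a convex combination of the two endpoints, so convexity gives $F(\epsilon)\le(1-2\epsilon)F(0)+2\epsilon F(\tfrac12)=F(0)+2\epsilon\bigl(F(\tfrac12)-F(0)\bigr)$. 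Hence the lemma follows once we also verify the single endpoint estimate $F(\tfrac12)-F(0)\le\frac{5G}{(1+G)(\log(1+G))^2}$.

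To prove convexity of $F$, write $F=\psi\circ h$ with $\psi(u)=\frac{1}{\log(1+u)}$ and $h(\epsilon)=Gg(\epsilon)$, and expand $F''=\psi''(h)(h')^2+\psi'(h)h''$. Routine differentiation gives $\psi'(u)=-\frac{1}{(1+u)(\log(1+u))^2}$, $\psi''(u)=\frac{\log(1+u)+2}{(1+u)^2(\log(1+u))^3}$, together with $g'(\epsilon)=-\frac{(1-\epsilon)(3+\epsilon)}{(1+\epsilon)^2}$, $g''(\epsilon)=\frac{8}{(1+\epsilon)^3}$, and $\frac{g'(\epsilon)^2}{g(\epsilon)}=\frac{(3+\epsilon)^2}{(1+\epsilon)^3}$. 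Substituting and abbreviating $A=Gg(\epsilon)>0$, $L=\log(1+A)>0$, the inequality $F''\ge0$ simplifies to $(3+\epsilon)^2A(L+2)\ge 8(1+A)L$. Since $(3+\epsilon)^2\ge 9$ on $[0,\tfrac12]$ and $L=\log(1+A)\le A$, we get $(3+\epsilon)^2A(L+2)\ge 9AL+18A\ge 8AL+L^2+18L\ge 8AL+8L=8(1+A)L$, which settles convexity.

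For the endpoint estimate, substitute $t=\log(1+G)>0$, so that $1+G=e^t$, $\frac{G}{1+G}=1-e^{-t}$, and $1+Gg(\tfrac12)=1+\tfrac{G}{6}=\tfrac{5+e^t}{6}$. Clearing denominators, the bound $F(\tfrac12)-F(0)\le\frac{5G}{(1+G)(\log(1+G))^2}$ becomes equivalent to
\begin{equation*}
\log\frac{5+e^t}{6}\cdot\bigl(t+5-5e^{-t}\bigr)\;\ge\;t^2,\qquad t>0.
\end{equation*}
This last scalar inequality is the main obstacle. Both sides vanish at $t=0$, and in fact the difference of the two sides vanishes to fourth order at the origin (the Taylor coefficients up through $t^3$ all cancel, leaving a strictly positive $t^4$ coefficient), so the crude first/second-order and Bernoulli-type bounds that sufficed for the convexity step are too weak to close it. I would finish it either by a careful global sign analysis of the difference — differentiating repeatedly and propagating positivity outward from $t=0$ — or, since the present lemma is quoted verbatim, simply by invoking Proposition 10 of \cite{cowan2018normal}.
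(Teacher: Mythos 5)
The paper does not prove this lemma at all: it is imported verbatim as Proposition 10 of \cite{cowan2018normal}, so there is no in-paper argument to compare against. Judged as a self-contained proof, your reduction is sound as far as it goes. The convexity step checks out: with $A=Gg(\epsilon)>0$ and $L=\log(1+A)$, the condition $F''\ge 0$ does reduce to $(3+\epsilon)^2A(L+2)\ge 8(1+A)L$, and since $(3+\epsilon)^2\ge 9$ and $L\le A$ the chain $9AL+18A\ge 8AL+L^2+18L\ge 8(1+A)L$ is valid. The secant-line argument then correctly reduces the lemma to the single endpoint estimate $F(\tfrac12)-F(0)\le\frac{5G}{(1+G)(\log(1+G))^2}$, and your change of variables $t=\log(1+G)$ correctly turns this into $\log\frac{5+e^t}{6}\cdot(t+5-5e^{-t})\ge t^2$ for $t>0$.

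The genuine gap is that this last inequality is asserted but not proved. It is true (the difference of the two sides is $\frac{25}{432}t^4+O(t^5)$ near $t=0$ and stays positive numerically for all $t>0$), but a fourth-order Taylor cancellation at the origin establishes nothing globally, and the crude bounds that carried the convexity step (e.g.\ the weighted AM--GM bound $\log\frac{5+e^t}{6}\ge t/6$) are demonstrably too weak here, as you note. So as written the proof is incomplete at exactly the step where all the difficulty is concentrated. Falling back on citing Proposition 10 of \cite{cowan2018normal} is legitimate and is precisely what the paper does -- but in that case the convexity machinery is superfluous, and if the goal is a self-contained argument, the global sign analysis of $\log\frac{5+e^t}{6}\cdot(t+5-5e^{-t})-t^2$ still has to be carried out.
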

\begin{proof}[Proof of Theorem \ref{thm: regret bound}]
    Using Lemma \ref{lem: log expansion}, denote $G_k = \frac{\Delta_k^2}{\sigma_{k}^2}\frac{1}{(\sqrt{1-\rho_k^2}+\sqrt{\delta_k})^2}$ we can derive from \eqref{eqn: regret bound appendix} that 
    \begin{equation}
    \begin{aligned}
        &\E[\reg] \leq \sum_{k\neq k^\star}\lb\frac{2\ln T}{\ln\lb 1+\frac{\Delta_k^2}{\sigma_{k}^2}\frac{(1-\epsilon)^2}{(1+\epsilon)}\frac{1}{(\sqrt{1-\rho_k^2}+\sqrt{\delta_k})^2}\rb}+C\lb
1+\frac{\sigma_{k^\star}^2}{\epsilon^2\Delta_k^2}\rb\sqrt{\ln T}+\frac{C}{\epsilon^2}\lb1+\frac{\sigma_{k}^2}{\Delta_k^2}\rb\rb\Delta_k \\
        \leq&\sum_{k\neq k^\star}\lb\frac{2\ln T}{\ln\lb 1+G_k\rb}+\frac{10G_k\ln T}{(1+G_k)(\ln(1+G_k))^2}\epsilon+C\lb
1+\frac{\sigma_{k^\star}^2}{\epsilon^2\Delta_k^2}\rb\sqrt{\ln T}+\frac{C}{\epsilon^2}\lb1+\frac{\sigma_{k}^2}{\Delta_k^2}\rb\rb\Delta_k.
    \end{aligned}
    \end{equation}
    Take $\epsilon =\frac{1}{2(\ln T)^{1/6}}$, since $T\geq 4K \geq 8$, we know that $\epsilon =\frac{1}{2(\ln T)^{1/6}} <\frac{1}{2}$, therefore the sample size condition \eqref{eqn: sample size condition} is satisfied, and we have
    \begin{equation}
    \begin{aligned}
        \E[\reg] \leq&\sum_{k\neq k^\star}\bigg(\frac{2\ln T}{\ln\lb 1+G_k\rb}+\frac{10G_k(\ln T)^{5/6}}{(1+G_k)(\ln(1+G_k))^2}\\
        &+C\lb
1+\frac{\sigma_{k^\star}^2}{\Delta_k^2}\rb(\ln T)^{5/6}+{C}\lb1+\frac{\sigma_{k}^2}{\Delta_k^2}\rb(\ln T)^{1/3}\bigg)\Delta_k\\
        \leq& \sum_{k\neq k^\star}\lb\frac{2\ln T}{\ln\lb 1+\frac{\Delta_k^2}{\sigma_{k}^2}\frac{1}{(\sqrt{1-\rho_k^2}+\sqrt{\delta_k})^2}\rb}+O\lb(\ln T)^{5/6}\rb\rb\Delta_k
    \end{aligned}
    \end{equation}
    which concludes the proof. 
\end{proof}
\begin{proof}[Proof of Corollary \ref{cor: regret bound inf N}]
    Take $a_k = 1+\frac{\Delta_k^2}{\sigma_{k}^2}\frac{1}{(\sqrt{1-\rho_k^2}+\sqrt{\delta_k})^2}$, and $b_k = 1+\frac{\Delta_k^2}{\sigma_{k}^2(1-\rho_k^2)}$, then $a_k\leq b_k$. Using the concavity of the function $\ln (x)$, we have $\ln(a_k)\geq \ln(b_k) - \frac{b_k-a_k}{a_k}$, and hence 
    $$
    \frac{1}{\ln(a_k)} \leq \frac{1}{\ln (b_k)} + \frac{b_k-a_k}{a_k\ln(a_k)\ln(b_k)}\leq\frac{1}{\ln (b_k)} + \frac{b_k-a_k}{a_k\ln(a_k)^2} .
    $$
    Notice that as long as $\delta_k\leq 1$ and $\delta_k = O((\ln T)^{-1/3})$, we have 
    $$a_k = 1+\frac{\Delta_k^2}{\sigma_{k}^2}\frac{1}{(\sqrt{1-\rho_k^2}+\sqrt{\delta_k})^2}\geq 1+\frac{\Delta_k^2}{4\sigma_{k}^2}$$
    $$
    b_k-a_k = \frac{\Delta_k^2}{\sigma_{k}^2}\frac{\delta_k+2\sqrt{\delta_k}\sqrt{1-\rho_k^2}}{(\sqrt{1-\rho_k^2}+\sqrt{\delta_k})^2(1-\rho_k^2)}\leq \frac{\Delta_k^2}{\sigma_{k}^2}\frac{\delta_k+2\sqrt{\delta_k}\sqrt{1-\rho_k^2}}{(1-\rho_k^2)^2} = O((\ln T)^{-1/6}).
    $$
    Therefore, combining it with Theorem \ref{thm: regret bound}, we know that under the condition $N_k = \Omega((\ln T)^{4/3})$, we have $\delta_k = O((\ln T)^{-1/3})$ and
    \begin{equation}
    \begin{aligned}
        \E[\reg] \leq& \sum_{k\neq k^\star}\lb\frac{2\ln T}{\ln\lb 1+\frac{\Delta_k^2}{\sigma_{k}^2}\frac{1}{(\sqrt{1-\rho_k^2}+\sqrt{\delta_k})^2}\rb}+O\lb(\ln T)^{5/6}\rb\rb\Delta_k \\ 
        = &\sum_{k\neq k^\star}\lb \frac{2\ln T}{\ln (a_k)} +O\lb(\ln T)^{5/6}\rb\rb\Delta_k \\
        \leq &\sum_{k\neq k^\star}\lb \frac{2\ln T}{\ln (b_k)} + \frac{(b_k-a_k)\ln T}{a_k\ln(a_k)^2} +O\lb(\ln T)^{5/6}\rb\rb\Delta_k\\
        = &\sum_{k\neq k^\star}\lb \frac{2\ln T}{\ln (1+ \frac{\Delta_k^2}{\sigma_{k}^2(1-\rho_k^2)})} +O\lb(\ln T)^{5/6}\rb\rb\Delta_k,
    \end{aligned}
    \end{equation}
    which finishes the proof. 
\end{proof}
\subsection{Proof of regret lower bound}
Before starting the proof, we need the following technical lemma.
\begin{lemma}
\label{lem:gaussian_KL}
Let $m,u\in\mathbb R^d$ be two fixed vectors, and let $\Sigma\succ0$ be a fixed matrix in $\mathbb R^{d\times d}$. Define $\Sigma' := \Sigma + uu^\top$. Then
\begin{equation}
\label{eq:gauss_KL_rankone}
\mathrm{KL}\!\Bigl(\mathcal N(m,\Sigma)\,\Big\|\,\mathcal N(m+u,\Sigma')\Bigr)
=
\frac12 \ln\bigl(1+u^\top \Sigma^{-1} u\bigr).
\end{equation}
\end{lemma}

\begin{proof}
Recall the KL divergence between two Gaussians:
\[
\mathrm{KL}\bigl(\mathcal N(m,\Sigma)\,\big\|\,\mathcal N(m+u,\Sigma')\bigr)
=
\frac12\left(
\ln\frac{\det\Sigma'}{\det\Sigma}
- d
+ \mathrm{tr}(\Sigma'^{-1}\Sigma)
+ u^\top \Sigma'^{-1} u
\right).
\]
Let $\Sigma'=\Sigma+uu^\top$. By the matrix determinant lemma,
\[
\det(\Sigma+uu^\top)=\det(\Sigma)\bigl(1+u^\top\Sigma^{-1}u\bigr),
\]
and by the Sherman--Morrison formula,
\[
(\Sigma+uu^\top)^{-1}
=\Sigma^{-1}-\frac{\Sigma^{-1}uu^\top\Sigma^{-1}}{1+u^\top\Sigma^{-1}u}.
\]
Denote $\alpha:=u^\top\Sigma^{-1}u$. Then
\[
\mathrm{tr}(\Sigma'^{-1}\Sigma)
=\mathrm{tr}\!\left(I-\frac{\Sigma^{-1}uu^\top}{1+\alpha}\right)
=d-\frac{\alpha}{1+\alpha},
\quad
u^\top\Sigma'^{-1}u
=\alpha-\frac{\alpha^2}{1+\alpha}
=\frac{\alpha}{1+\alpha}.
\]
Plugging into the KL formula shows that the trace terms cancel, hence
\[
\mathrm{KL}\bigl(\mathcal N(m,\Sigma)\,\big\|\,\mathcal N(m+u,\Sigma')\bigr)
=\frac12\ln(1+\alpha)
=\frac12\ln\bigl(1+u^\top\Sigma^{-1}u\bigr),
\]
which concludes the proof.
\end{proof}

\begin{remark}
The reason why we choose this specific $\Sigma'$ is that, using the first-order condition, we can easily show that 
\[\Sigma + uu^\top = \arg\min_{\Sigma'} \mathrm{KL}\bigl(\mathcal N(m,\Sigma)\,\big\|\,\mathcal N(m+u,\Sigma')\bigr).\]
\end{remark}

Now we present the proof of Theorem \ref{thm: lower bound}. The main idea of the proof comes from \cite{lai1985asymptotically} and \cite{burnetas1996optimal}. Again, we will use $\E_{\theta}^\pi$ and $\P_{\theta}^\pi$ to denote the expectation and probability under a specific parameter and policy.
\begin{proof}[Proof of Theorem \ref{thm: lower bound}]
    Consider a fixed suboptimal arm $k\neq k^\star$, define $$
    M_k = \frac{1}{2}\ln\lb 1+\dfrac{\Delta_k^2}{\sigma_k^2(1-\rho_k^2)}\rb.
    $$
    Then it suffices to show that for any fixed $\delta>0$
\begin{equation}
\label{eq:nk_lower}
\liminf_{T\to\infty}\frac{\mathbb E_\theta^\pi[n_{k,T}]M_k}{\ln T}
\ge
 1-2\delta.
\end{equation}
Using the Markov inequality, we have 
$$
\P^\pi_\theta\lb \frac{n_{k,T}}{\ln T}\geq \frac{1-2\delta}{M_k}\rb\leq \frac{\E^\pi_\theta [n_{k,T}]M_k}{\ln T(1-2\delta)}.
$$
Then it suffices to show 
$$
\lim_{T\to\infty}\P^\pi_\theta\lb \frac{n_{k,T}}{\ln T}\geq \frac{1-2\delta}{M_k}\rb=1
$$

Consider a fixed alternative $\theta':=(\{\mu_s',\tilde\mu_s',\Sigma_s'\}_{s=1}^K)$ such that $$\mu_k'=\mu_k+\Delta_k+\epsilon, \tilde\mu_k'=\tilde\mu_k, \Sigma_k' = \Sigma_k+uu^T, \text{ where } u=(\Delta_k+\epsilon, 0)^T$$,
and all parameters for the other arm remains the same. Use Lemma \ref{lem:gaussian_KL}, we know that $$\mathrm{KL}\!\Bigl(\mathcal N((\mu_k,\tilde\mu_k),\Sigma_k)\Big\|\mathcal N((\mu_k',\tilde\mu_k'),\Sigma'_k)\Bigr) = \frac{1}{2}\ln \lb1+u^\top\Sigma_k^{-1}u\rb = \frac{1}{2}\ln\lb1+\frac{(\Delta_k+\epsilon)^2}{\sigma_k^2(1-\rho_k^2)}\rb.$$
Therefore, we can choose $\epsilon>0$ such that 
$$
I_{\theta,\theta'}:=\mathrm{KL}\!\Bigl(\mathcal N((\mu_k,\tilde\mu_k),\Sigma_k)\Big\|\mathcal N((\mu_k',\tilde\mu_k'),\Sigma_k')\Bigr) = (1+\delta) M_k
$$

Then arm $k$ becomes the optimal arm under this new instance, thus by Definition \ref{def: uniformly good policy}, we have 
\begin{equation}
\label{eqn: bound on alternative}
    \mathbb E_{\theta'}^\pi[T-n_{k,T}] = o(T^a).
\end{equation}
Define the likelihood ratio as 
$$
L_t = \Pi_{s=1}^t\frac{f_{\theta}(R_{{k,s}},\hat R_{{k,s}})}{f_{\theta'}(R_{{k,s}},\hat R_{{k,s}})}, 
$$
where $f_{\theta}(R_{{k,s}},\hat R_{{k,s}})$ denote the probability density under instance $\theta$. Define two events $A_{k,T}$ and $B_{k,T}$ as
$$
A_{k,T} = \left\{\frac{n_{k,T}}{\ln T}<\frac{1-2\delta}{M_k}\right\}, B_{k,T} = \left\{\ln L_{n_{k,T}}\leq (1-\delta)\ln T\right\}.
$$
Then it suffices to show that $\P^\pi_\theta(A_{k,T}\cap B_{k,T})=o(1)$ and $\P^\pi_\theta(A_{k,T}\cap B_{k,T}^C)=o(1)$.

For the first event, change the measure from $\P_\theta$ to $\P_{\theta'}$, we know that
\begin{equation}
    \begin{aligned}
        &\P^\pi_\theta(A_{k,T}\cap B_{k,T}) = \E^\pi_\theta[1_{A_{k,T}\cap B_{k,T}}] = \E^\pi_{\theta'}[1_{A_{k,T}\cap B_{k,T}}L_{n_{k,T}}]
        \leq T^{1-\delta}\P^\pi_{\theta'}({A_{k,T}})
    \end{aligned}
\end{equation}
where the last inequality uses the fact that $\ln L_{n_{k,T}}\leq (1-\delta)\ln T$ on event $B_{k,T}$. Then, use \eqref{eqn: bound on alternative} we know that 
$$
\P^\pi_{\theta'}({A_{k,T}}) = \P^\pi_{\theta'}\lb T-n_{k,T}>T-\frac{1-2\delta}{M_k}\ln T\rb\leq \frac{\E^\pi_{\theta'}[T-n_{k,T}]}{T-\frac{1-2\delta}{M_k}\ln T} \leq \frac{o(T^a)}{T-\frac{1-2\delta}{M_k}\ln T}=o(T^{a-1}),
$$
thus $\P^\pi_\theta(A_{k,T}\cap B_{k,T})=o(T^{a-\delta})=o(1)$ by taking $a<\delta$.

For the second event, define $b_{k,T} = \frac{(1-2\delta)\ln T}{M_k}$, then $A_{k,T} = \{n_{k,T}\leq b_{k,T}\}$, therefore we have 
\begin{equation}
    \begin{aligned}
        &\P^\pi_\theta(A_{k,T}\cap B_{k,T}^C) \leq \P^\pi_\theta\lb \max_{t\leq \lfloor b_{k,T}\rfloor}\{\ln L_t\}\geq (1-\delta)\ln T\rb \\ 
        =&\P^\pi_\theta\lb \max_{t\leq \lfloor b_{k,T}\rfloor}\{\ln L_t\} / b_{k,T}\geq \frac{1-\delta}{1-2\delta}M_{k}\rb,\\
        =&\P^\pi_\theta\lb \max_{t\leq \lfloor b_{k,T}\rfloor}\{\ln L_t\} / b_{k,T}\geq \frac{1-\delta}{(1-2\delta)(1+\delta)}I_{\theta,\theta'}\rb,\\
        =&\P^\pi_\theta\lb \max_{t\leq \lfloor b_{k,T}\rfloor}\{\ln L_t\} / b_{k,T}\geq \frac{1-\delta}{1-\delta-2\delta^2}I_{\theta,\theta'}\rb,\\
    \end{aligned}
\end{equation}
By the law of large numbers, $\ln L_{t}/t\rightarrow I_{\theta,\theta'}$ almost surely under $\P_{\theta}$, thus $\max_{t\leq n}\ln L_{t}/n\rightarrow I_{\theta,\theta'}$
almost surely under $\P_{\theta}$, which concludes that 
\begin{equation}
\P^\pi_\theta(A_{k,T}\cap B_{k,T}^C) = o(1).    
\end{equation}
Combining together, it implies that $\P_{\theta}^\pi (A_{k,T})=o(1)$, which finishes the proof. 
\end{proof}
\subsection{Proof of non-Gaussian rewards }
\begin{lemma}
\label{lem: student to normalize}
Let $S_n = \sum_{i=1}^n X_i$, $V_n^2 = \sum_{i=1}^n X_i^2$, and let $t_n = S_n / (\sqrt{n}\hat{\sigma}_n)$ denote the Student's $t$-statistic, where $\hat{\sigma}_n^2 = n^{-1} \sum_{i=1}^n (X_i - \bar{X})^2$. For any $\sqrt{n}> x > 0$ and $n > 1$, the following two events are identical:
\begin{equation}
    \left\{\frac{S_n}{V_n} \ge x\right\} = \left\{t_n \ge x \sqrt{\frac{n}{n - x^2}}\right\}.
\end{equation}
\end{lemma}

\begin{proof}
Recall
\[
\hat\sigma_n^2=\frac{1}{n}\Bigl(V_n^2-\frac{S_n^2}{n}\Bigr),
\quad
t_n=\frac{S_n}{\sqrt n\,\hat\sigma_n}
=\frac{S_n\sqrt{n}}{\sqrt{nV_n^2-S_n^2}}.
\]
Define $A:=S_n/V_n$. Then $
t_n^2=\frac{nA^2}{n-A^2}.$
Since $x> 0$, we have
\begin{align*}
&\frac{S_n}{V_n}\ge x
\Longleftrightarrow\ A\geq x\Longleftrightarrow\ A^2(n-x^2)\ge x^2(n-A^2) \text{ and } S_n>0\\
&\Longleftrightarrow\ \frac{nA^2}{n-A^2}\ge \frac{nx^2}{n-x^2} \text{ and } S_n>0\\
&\Longleftrightarrow\ t_n^2\ge x^2\frac{n}{\,n-x^2\,} \text{ and } S_n>0 \\
&\Longleftrightarrow\ t_n\ge x\sqrt{\frac{n}{\,n-x^2\,}}.
\end{align*}
Therefore, the two events are identical, which concludes the desired equation.
\end{proof}

\begin{lemma}
\label{lem: reg residual}
Consider the simple linear regression model $Y_i = \alpha + \beta X_i + \epsilon_i$ for $i=1, \dots, n$. Let $\hat{\alpha}$ and $\hat{\beta}$ denote the Ordinary Least Squares (OLS) estimators obtained by regressing $Y$ on $X$. The sum of squared residuals is given by:
\begin{equation}
    \sum_{i=1}^n (Y_i - \hat{\alpha} - \hat{\beta}X_i)^2 = \left(1 - r_{X,\epsilon}^2\right) \sum_{i=1}^n (\epsilon_i - \bar{\epsilon})^2,
\end{equation}
where $r_{X,\epsilon}$ is the sample correlation coefficient between the regressor $X$ and the error term $\epsilon$, and $\bar{\epsilon} = \frac{1}{n}\sum_{i=1}^n \epsilon_i$ is the sample mean of the errors.
\end{lemma}
\begin{proof}
Since $Y_i=\alpha+\beta X_i+\epsilon_i$, the OLS residuals from regressing $Y$ on $(1,X)$ equal the OLS residuals from regressing $\epsilon$ on $(1,X)$. Hence
\[
\sum_{i=1}^n (Y_i-\hat\alpha-\hat\beta X_i)^2
=\min_{a,b}\sum_{i=1}^n (\epsilon_i-a-bX_i)^2.
\]
Centering removes the intercept, then the right-hand side equals
\begin{equation*}
    \begin{aligned}
        &\min_{b}\sum_{i=1}^n\bigl((\epsilon_i-\bar\epsilon)-b(X_i-\bar X)\bigr)^2\\
        =& \min_{b}\left\{\sum_{i=1}^n(\epsilon_i-\bar\epsilon)^2-2b\sum_{i=1}^n(\epsilon_i-\bar\epsilon)(X_i-\bar X)+b^2\sum_{i=1}^n(X_i-\bar X)^2\right\}\\
        =&\sum_{i=1}^n(\epsilon_i-\bar\epsilon)^2-\frac{\Bigl(\sum_{i=1}^n (X_i-\bar X)(\epsilon_i-\bar\epsilon)\Bigr)^2}{\sum_{i=1}^n (X_i-\bar X)^2}.
    \end{aligned}
\end{equation*}
Now write the sample correlation
\[
r_{X,\epsilon}
=\frac{\sum_{i=1}^n (X_i-\bar X)(\epsilon_i-\bar\epsilon)}
{\sqrt{\sum_{i=1}^n (X_i-\bar X)^2}\sqrt{\sum_{i=1}^n(\epsilon_i-\bar\epsilon)^2}},
\]
which gives
\[
\frac{\Bigl(\sum_{i=1}^n (X_i-\bar X)(\epsilon_i-\bar\epsilon)\Bigr)^2}{\sum_{i=1}^n (X_i-\bar X)^2}
=r_{X,\epsilon}^2\sum_{i=1}^n(\epsilon_i-\bar\epsilon)^2.
\]
Substituting yields the claim.
\end{proof}
\begin{lemma}
\label{lem: variance concentration}
Let $X_1,\dots,X_n$ be i.i.d.\ sub-Gaussian random variables with mean $\mu$, variance $\sigma^2>0$, and sub-Gaussian norm $\|X_i-\mu\|_{\psi_2} \le K$. Define the sample variance
\[
\hat\sigma^2 := \frac{1}{n}\sum_{i=1}^n (X_i-\bar X)^2,
\quad \bar X := \frac{1}{n}\sum_{i=1}^n X_i.
\]
Then there exists an absolute constant $c>0$ such that for any $\epsilon \in (0,1)$ and $n \ge 2$,
\[
\P\!\left(\frac{|\hat\sigma^2-\sigma^2|}{\sigma^2}\ge \epsilon\right)
\le 4 \exp\left(-c \, n \epsilon^2 \frac{\sigma^4}{K^4}\right).
\]
\end{lemma}

\begin{proof}
Without loss of generality, we may assume $\mu=0$. Expanding the square in the sample variance, we can write it exactly as:
\[
\hat\sigma^2 = \frac{1}{n}\sum_{i=1}^n X_i^2 - \bar X^2.
\]
Subtracting $\sigma^2$ and applying the triangle inequality, we bound the deviation by:
\[
|\hat\sigma^2-\sigma^2| \le \left| \frac{1}{n}\sum_{i=1}^n (X_i^2-\sigma^2) \right| + \bar X^2.
\]
For any $\epsilon \in (0,1)$, we can apply a union bound by splitting the error evenly:
\[
\P\!\left(|\hat\sigma^2-\sigma^2|\ge \epsilon\sigma^2\right) \le \P\!\left(\left|\frac{1}{n}\sum_{i=1}^n (X_i^2-\sigma^2)\right| \ge \frac{\epsilon\sigma^2}{2}\right) + \P\!\left(\bar X^2 \ge \frac{\epsilon\sigma^2}{2}\right).
\]

First, since $X_i$ is sub-Gaussian with $\|X_i\|_{\psi_2} \le K$, the centered variables $X_i^2 - \sigma^2$ are sub-exponential with norm bounded by $C_1 K^2$ for some absolute constant $C_1$. Applying Bernstein's inequality for sub-exponential random variables gives:
\[
\P\!\left(\left|\frac{1}{n}\sum_{i=1}^n (X_i^2-\sigma^2)\right| \ge \frac{\epsilon\sigma^2}{2}\right) \le 2\exp\left(-c_1 n \min\left\{\frac{\epsilon^2\sigma^4}{K^4}, \frac{\epsilon\sigma^2}{K^2}\right\}\right).
\]
Since we established that $\sigma \le K$ universally, and we are evaluating the bound for $\epsilon \in (0, 1)$, it follows that $\frac{\epsilon\sigma^2}{K^2} < 1$. Therefore, the quadratic term dominates the minimum, yielding:
\[
\P\!\left(\left|\frac{1}{n}\sum_{i=1}^n (X_i^2-\sigma^2)\right| \ge \frac{\epsilon\sigma^2}{2}\right) \le 2\exp\left(-c_1 n \frac{\epsilon^2\sigma^4}{K^4}\right).
\]

Second, for the empirical mean term, recall that $\bar X$ is a sub-Gaussian random variable with norm $\|\bar X\|_{\psi_2} \le C_2 K / \sqrt{n}$. Its tail probability is therefore bounded by:
\[
\P\!\left(\bar X^2 \ge \frac{\epsilon\sigma^2}{2}\right) = \P\!\left(|\bar X| \ge \sigma\sqrt{\frac{\epsilon}{2}}\right) \le 2\exp\left(-c_2 n \frac{\epsilon\sigma^2}{K^2}\right).
\]
Because $\epsilon < 1$ and $\sigma \le K$, we can lower bound the exponent to match the previous term:
\[
\frac{\epsilon\sigma^2}{K^2} \ge \left(\frac{\epsilon\sigma^2}{K^2}\right)^2 = \frac{\epsilon^2\sigma^4}{K^4}.
\]
Thus, the second term is bounded by $2\exp(-c_2 n \frac{\epsilon^2\sigma^4}{K^4})$.

Letting $c = \min(c_1, c_2)$, we combine the two bounds to conclude:
\[
\P\!\left(\frac{|\hat\sigma^2-\sigma^2|}{\sigma^2}\ge \epsilon\right) \le 4 \exp\left(-c \, n \epsilon^2 \frac{\sigma^4}{K^4}\right),
\]
which holds for all $n \ge 2$.
\end{proof}
\begin{lemma}
\label{lem:corr_concentration}
Let $(X_i,Y_i)_{i=1}^n$ be i.i.d.\ copies of $(X,Y)$ with
\[
\sigma_X^2:=\var(X)\in(0,\infty),\quad \sigma_Y^2:=\var(Y)\in(0,\infty),\quad \cov(X,Y)=0.
\]
Assume $X$ and $Y$ are sub-Gaussian, and let $K := \max\{\|X-\E X\|_{\psi_2}, \|Y-\E Y\|_{\psi_2}\}$. Let $\widehat\rho$ be the sample correlation coefficient
\[
\widehat\rho
:=\frac{\sum_{i=1}^n (X_i-\bar X)(Y_i-\bar Y)}
{\sqrt{\sum_{i=1}^n (X_i-\bar X)^2}\,\sqrt{\sum_{i=1}^n (Y_i-\bar Y)^2}},
\quad
\bar X:=\frac1n\sum_{i=1}^n X_i,\ \bar Y:=\frac1n\sum_{i=1}^n Y_i.
\]
Then there exist absolute constants $C, c > 0$ such that for all $n\ge 2$ and $t\in(0,1]$,
\[
\P\bigl(|\widehat\rho|\ge t\bigr)
\le C \exp\left(-c \, n t^2 \frac{\sigma_X^2 \sigma_Y^2}{K^4}\right).
\]
\end{lemma}

\begin{proof}
Since correlation is invariant to shifts, we may assume without loss of generality that $\E X=\E Y=0$. Define empirical moments and variances
\[
S_{xy}:=\frac1n\sum_{i=1}^n X_iY_i, \quad
\hat\sigma_{XY}:=\frac1n\sum_{i=1}^n (X_i-\bar X)(Y_i-\bar Y),
\]
\[
\hat\sigma_X^2:=\frac1n\sum_{i=1}^n (X_i-\bar X)^2,\quad
\hat\sigma_Y^2:=\frac1n\sum_{i=1}^n (Y_i-\bar Y)^2.
\]
Notice that 
\[
\{|\widehat\rho|\ge t\} \subseteq \{\hat\sigma_X^2\leq \tfrac12\sigma_X^2\}\cup\{\hat\sigma_Y^2\leq \tfrac12\sigma_Y^2\} \cup \{|\hat\sigma_{XY}|\ge (t/2)\sigma_X\sigma_Y\},
\]
therefore
\begin{equation}
\P(|\widehat\rho|\ge t)\le \P(\hat\sigma_X^2\leq \tfrac12\sigma_X^2) + \P(\hat\sigma_Y^2\leq \tfrac12\sigma_Y^2)+\P\bigl(|\hat\sigma_{XY}|\ge (t/2)\sigma_X\sigma_Y\bigr).
\label{eq:rho split event}
\end{equation}

First, applying Lemma \ref{lem: variance concentration} with $\epsilon = 1/2$, the empirical variance bounds decay exponentially:
\begin{equation}
\label{eqn: sigma X tail}
\P(\hat\sigma_X^2\leq \tfrac12\sigma_X^2) \le 4\exp\left(-c_1 n \frac{\sigma_X^4}{K^4}\right),
\end{equation}
and similarly for $\hat\sigma_Y^2$.

Second, for the cross-covariance term in \eqref{eq:rho split event}, use the fact that $\hat \sigma_{XY} = S_{xy}-\bar X\bar Y$ to write:
\[
\P\bigl(|\hat\sigma_{XY}|\ge (t/2)\sigma_X\sigma_Y\bigr)
\le
\P\bigl(|S_{xy}|\ge (t/4)\sigma_X\sigma_Y\bigr)
+\P\bigl(|\bar X\bar Y|\ge (t/4)\sigma_X\sigma_Y\bigr).
\]

For $S_{xy}$, since $X_i$ and $Y_i$ are sub-Gaussian and uncorrelated, their product $X_i Y_i$ is sub-exponential with mean zero. Bernstein's inequality yields:
\[
\P\bigl(|S_{xy}|\ge (t/4)\sigma_X\sigma_Y\bigr) \le 2\exp\left(-c_2 n \min\left\{ \frac{t^2 \sigma_X^2 \sigma_Y^2}{K^4}, \frac{t \sigma_X \sigma_Y}{K^2} \right\} \right) \le 2\exp\left(-c_2 n \frac{t^2 \sigma_X^2 \sigma_Y^2}{K^4}\right),
\]
using the assumptions that $t \le 1$ and $\sigma \le K$.

For the sample mean product $\bar X\bar Y$, apply the AM-GM inequality $2|\bar X\bar Y| \le \bar X^2 + \bar Y^2$:
\[
\P\bigl(|\bar X\bar Y|\ge (t/4)\sigma_X\sigma_Y\bigr) \le \P\bigl(\bar X^2 \ge (t/4)\sigma_X\sigma_Y\bigr) + \P\bigl(\bar Y^2 \ge (t/4)\sigma_X\sigma_Y\bigr).
\]
Because $\bar X$ and $\bar Y$ are sub-Gaussian with norms scaling as $K/\sqrt{n}$, their squares possess exponential tails. This provides:
\begin{equation}
\P\bigl(|\hat\sigma_{XY}|\ge (t/2)\sigma_X\sigma_Y\bigr) \le 6\exp\left(-c_3 n \frac{t^2 \sigma_X^2 \sigma_Y^2}{K^4}\right).
\label{eqn: sigma XY tail}
\end{equation}

Finally, we conclude the proof by combining \eqref{eqn: sigma X tail}, the analogous bound for $Y$, and \eqref{eqn: sigma XY tail} into \eqref{eq:rho split event}.
\end{proof}

Now we turn to the proof of the regret analysis for batched rewards. First, as an analog of Lemma \ref{lem:gaussian-shifted-kn}, we prove the following shifted confidence bound result for $\mukt$ under batched non-Gaussian rewards. We first introduce some notations. 
Let 
\[
\hat\sigma_{R,\hat R,k,n}
:=
\frac1{mn}\sum_{s=1}^{mn}
\bigl(R_{k,s}-\E_{k,n}[R]\bigr)\bigl(\hat R_{k,s}-\E_{k,n}[\hat R]\bigr),
\]
\[
\hat\sigma_{\hat R,k,n}^2
:=
\frac1{mn}\sum_{s=1}^{mn}\bigl(\hat R_{k,s}-\E_{k,n}[\hat R]\bigr)^2,
\quad
\hat\beta_{k,n}
:=
\frac{\hat\sigma_{R,\hat R,k,n}}{\hat\sigma_{\hat R,k,n}^2},
\]
\[
\hat\sigma_{R,k,n}^2
:=
\frac1{mn}\sum_{s=1}^{mn}\bigl(R_{k,s}-\E_{k,n}[R]\bigr)^2,
\]
\[
\hat\sigma_{\hat R,k,n,\all}^2
:=
\frac1{m(n+N_k) }
\left(
\sum_{s=1}^{mn}\bigl(\hat R_{k,s}-\E_{k,n}^{\all}[\hat R]\bigr)^2
+
\sum_{s=1}^{mN_k}\bigl(\hat R^{\off}_{k,s}-\E_{k,n}^{\all}[\hat R]\bigr)^2
\right),
\]
and
\[
\hat\sigma_{\epsilon,k,n}^2
:=
\frac1{mn}
\sum_{s=1}^{mn}
\left(
R_{k,s}
-
\hat\mu^{\mathrm{MLA}}_{k,n}
-
\hat\beta_{k,n}\bigl(\hat R_{k,s}-\E_{k,n}^{\all}[\hat R]\bigr)
\right)^2.
\]
Also define the sample correlation
\[
\hat c_{k,n}
:=
\frac{\sum_{s=1}^{mn}(\epsilon_{k,s}-\E_{k,n}[\epsilon])(\hat R_{k,s}-\E_{k,n}[\hat R])}
{\sqrt{\sum_{s=1}^{mn}(\epsilon_{k,s}-\E_{k,n}[\epsilon])^2}
 \sqrt{\sum_{s=1}^{mn}(\hat R_{k,s}-\E_{k,n}[\hat R])^2}}.
\]

For each $t\in\{K,\dots,T\}$, define
\[
x_t:=\sqrt{2\ln t},
\]
\[
\widetilde B^{(1)}_{k,n}(t)
:=
\frac{\hat\sigma_{\hat R,k,n,\all}}{\hat\sigma_{\hat R,k,n}}
\sqrt{\frac{2\hat\sigma_{R,k,n}^2\ln t}{m(n+N_k)-2\ln t}},
\]
\[
\widetilde B^{(2)}_{k,n}(t)
:=
\frac{1+|\hat c_{k,n}|}{\sqrt{1-\hat c_{k,n}^2}}
\sqrt{\frac{2\hat\sigma_{\epsilon,k,n}^2\ln t}{mn-2\ln t}},
\]
and
\[
\widetilde B_{k,n}(t):=\widetilde B^{(1)}_{k,n}(t)+\widetilde B^{(2)}_{k,n}(t).
\]

Define the good events
\begin{equation}
    \begin{aligned}
    \label{eqn: def Ekn batch}
        &\mathcal E^1_{k,n}:=\{\hat\sigma_{R,k,n}^2\le (1+\varepsilon)\sigma_k^2\},\\
&\mathcal E^2_{k,n}:=\{\hat\sigma_{\epsilon,k,n}^2\le (1+\varepsilon)(1-\rho_k^2)\sigma_k^2\},\\
&\mathcal E^3_{k,n}
:=
\left\{
\hat\sigma_{\hat R,k,n,\all}^2
\le
\left(1+\frac1{3m^{1/3}}\right)\tilde\sigma_k^2
\right\},\\
&\mathcal E^4_{k,n}
:=
\left\{
\hat\sigma_{\hat R,k,n}^2
\ge
\left(1-\frac1{3m^{1/3}}\right)\tilde\sigma_k^2
\right\},\\
&\mathcal E^5_{k,n}
:=
\left\{
|\hat c_{k,n}|
\le
\frac1{3m^{1/3}}
\right\},\\
&\mathcal E_{k,n}
:=
\bigcap_{i=1}^5 \mathcal E^i_{k,n}.
    \end{aligned}
\end{equation}
\begin{lemma}
\label{lem:batched-shifted-simple-kn}
Under Assumptions \ref{asm: batch reward} and \ref{asm: m and T}, there exist universal constants $C_1,C_2>0$ such that for every
arm $k\in[K]$, every deterministic batch count $n\ge 1$, every $t\in\{K,\dots,T\}$,
and every $h>0$,
\begin{align}
&\mathbb P\Bigl(
\hat\mu^{\mla}_{k,n}+\tilde B_{k,n}(t)<\mu_k-h,\ \mathcal E_{k,n}
\Bigr)\notag\\
& \le
\frac{C_1}{t\sqrt{\ln t}}
\left[
\exp\!\left(-\frac{C_2h\sqrt{mn\ln t}}{\sigma_k}\right)
+
\exp(-C_2(n^{1/6}-1)\sqrt{\ln t})
\right].
\label{eq:batched-shifted-simple-main}
\end{align}
\end{lemma}

\begin{proof}
From Proposition \ref{prop: intercept}, we know that 
    \begin{equation}
    \begin{aligned}
    \label{eqn: MLA linear form}
        \mukt = & \E_{k,n}[R] - \hat\beta_{k,n}(\E_{k,n}[\hat R] - \E_{k,n}^\all[\hat R]).
    \end{aligned}
    \end{equation}
    Define $S_{1,k,n}$ and $S_{2,k,n}$ as follows: 
    \begin{equation}
    \label{eqn: S1 S2 non Gaussian}
        S_{1,k,n} = \hat \beta_{k,n}(\E_{k,n}^\all[\hat R] - \tilde\mu_k),\quad 
        S_{2,k,n} = \E_{k,n}[R]-\mu_k - \hat \beta_{k,n}(\E_{k,n}[\hat R] - \tilde\mu_k), 
    \end{equation}
    then we have $\mukt-\mu_k = S_{1,k,n}+ S_{2,k,n}$. Now we will handle these two terms separately.
Hence
\begin{align}
\Bigl\{
\hat\mu^{\mathrm{MLA}}_{k,n}+\widetilde B_{k,n}(t)<\mu_k-h
\Bigr\}
&\subseteq
\Bigl\{
S_{1,k,n}<-\widetilde B^{(1)}_{k,n}(t)-h/2
\Bigr\}\cup
\Bigl\{
S_{2,k,n}<-\widetilde B^{(2)}_{k,n}(t)-h/2
\Bigr\}.
\label{eq:batched-shifted-simple-union}
\end{align}
We bound the two probabilities on the right-hand side separately.

\medskip
\noindent
\textbf{Step 1: the $S_{1,k,n}$ term.}
Since
\[
|\hat\beta_{k,n}|
=
\left|
\frac{\hat\sigma_{R,\hat R,k,n}}{\hat\sigma_{\hat R,k,n}^2}
\right|
\le
\frac{\hat\sigma_{R,k,n}}{\hat\sigma_{\hat R,k,n}},
\]
the event
$
\{
S_{1,k,n}<-\widetilde B^{(1)}_{k,n}(t)-h/2
\}
$
implies
\[
\frac{\hat\sigma_{R,k,n}}{\hat\sigma_{\hat R,k,n}}
\bigl|\E_{k,n}^{\all}[\hat R]-\tilde\mu_k\bigr|
\ge
\frac{\hat\sigma_{\hat R,k,n,\all}}{\hat\sigma_{\hat R,k,n}}
\sqrt{\frac{2\hat\sigma_{R,k,n}^2\ln t}{m(n+N_k)-2\ln t}}
+\frac h2.
\]
Recall $x_t = \sqrt{2\ln t}$. Multiplying both sides by $\frac{\sqrt{m(n+N_k)}}{\hat\sigma_{\hat R,k,n,\all}}\frac{\hat\sigma_{\hat R,k,n}}{\hat\sigma_{ R,k,n}}$ yields
\begin{align}
\frac{\sqrt{m(n+N_k)}\,
\bigl|\E_{k,n}^{\all}[\hat R]-\tilde\mu_k\bigr|}
{\hat\sigma_{\hat R,k,n,\all}}
&\ge
x_t\sqrt{\frac{m(n+N_k)}{m(n+N_k)-x_t^2}}
+
\frac{h\sqrt{m(n+N_k)}}{2\hat\sigma_{R,k,n}}
\frac{\hat\sigma_{\hat R,k,n}}{\hat\sigma_{\hat R,k,n,\all}}.
\label{eq:batched-simple-S1-raw}
\end{align}
On $\mathcal E_{k,n}$ we have
\[
\hat\sigma_{R,k,n}\le \sqrt{1+\epsilon}\,\sigma_k\le \sqrt{\frac32}\,\sigma_k,
\]
and
\[
\frac{\hat\sigma_{\hat R,k,n}}{\hat\sigma_{\hat R,k,n,\all}}
\ge
\sqrt{\frac{1-\frac1{3m^{1/3}}}{1+\frac1{3m^{1/3}}}}
\ge \frac12,
\]
because $\frac1{3m^{1/3}}\le 1/3$.
Therefore \eqref{eq:batched-simple-S1-raw} implies that, on $\mathcal E_{k,n}$,
\begin{align}
\frac{\sqrt{m(n+N_k)}\,
\bigl|\E_{k,n}^{\all}[\hat R]-\tilde\mu_k\bigr|}
{\hat\sigma_{\hat R,k,n,\all}}
&\ge
x_t\sqrt{\frac{m(n+N_k)}{m(n+N_k)-x_t^2}}
+
\frac{h\sqrt{m(n+N_k)}}{2\sqrt{6}\sigma_k}.
\label{eq:batched-simple-S1-good}
\end{align}

Now define
\begin{equation}
\label{eqn: define psi}
    \psi_M(x):=x\sqrt{\frac{M }{M-x^2}},
\quad 0< x<\sqrt M,
\end{equation}
and
\[
U^{(1)}_{k,n}
:=
\frac{
m(n+N_k)\,\bigl|\E_{k,n}^{\all}[\hat R]-\tilde\mu_k\bigr|
}{
\sqrt{
\sum_{s=1}^{mn}(\hat R_{k,s}-\tilde\mu_k)^2
+
\sum_{s=1}^{mN_k}(\hat R_{k,s}^{\off}-\tilde\mu_k)^2
}
}.
\]
By Lemma \ref{lem: student to normalize},
\[
U^{(1)}_{k,n}\ge x \Leftrightarrow
\frac{\sqrt{m(n+N_k)}\,
\bigl|\E_{k,n}^{\all}[\hat R]-\tilde\mu_k\bigr|}
{\hat\sigma_{\hat R,k,n,\all}}
\ge
\psi_{m(n+N_k)}(x).
\]
Since
\[
x_t\sqrt{\frac{m(n+N_k)}{m(n+N_k)-x_t^2}}
=
\psi_{m(n+N_k)}(x_t),
\]
it follows from \eqref{eq:batched-simple-S1-good} that
\begin{align}
\Bigl\{
S_{1,k,n}<-\widetilde B^{(1)}_{k,n}(t)-h/2
\Bigr\}\cap \mathcal E_{k,n}
\subseteq
\left\{
U^{(1)}_{k,n}\ge \psi_{m(n+N_k)}^{-1}
\left(
\psi_{m(n+N_k)}(x_t)+ \frac{h\sqrt{m(n+N_k)}}{2\sqrt{6}\sigma_k}
\right)
\right\}.
\label{eq:batched-simple-S1-reduce}
\end{align}

Next, define the moderate-deviation boundary
\[
d^{(1)}_{k,n}:=\frac{(m(n+N_k))^{1/6}}{\ell_{\hat R_k}^{1/3}},
\]
where $\ell_{\hat R_k}$ is defined in \eqref{eq:ell_R}. 
Under Assumption \ref{asm: m and T}, which requires $m \geq 8 (\ln T)^3\ell_{\hat R_k}$, for sufficiently large $T$,
\[
x_t=\sqrt{2\ln t}\le \sqrt{2\ln T}\le d^{(1)}_{k,n}, \quad  d^{(1)}_{k,n}\le \sqrt{m(n+N_k)/2}.
\]
Moreover, $
\psi_M'(x)=\frac{M\sqrt{M }}{(M-x^2)^{3/2}},
$
hence
$
\sup_{0\le x\le \sqrt{M/2}}\psi_M'(x)\le 3.
$
Therefore, for every
$
0\le u\le d^{(1)}_{k,n}-x_t,
$
the mean-value theorem gives
\[
\psi_{m(n+N_k)}(x_t+u)
\le
\psi_{m(n+N_k)}(x_t)+3u.
\]
Set
\[
u^{(1)}_{k,n,t}(h)
:=
\min\left\{
\frac{h\sqrt{m(n+N_k)}}{6\sqrt{6}\sigma_k},
\ d^{(1)}_{k,n}-x_t
\right\}.
\]
Then
\[
\psi_{m(n+N_k)}(x_t+u^{(1)}_{k,n,t}(h))
\le
\psi_{m(n+N_k)}(x_t)+\frac{h\sqrt{m(n+N_k)}}{2\sqrt{6}\sigma_k}.
\]
Combining this with \eqref{eq:batched-simple-S1-reduce}, we obtain
\[
\Bigl\{
S_{1,k,n}<-\widetilde B^{(1)}_{k,n}(t)-h/2
\Bigr\}\cap \mathcal E_{k,n}
\subseteq
\Bigl\{
U^{(1)}_{k,n}\ge x_t+u^{(1)}_{k,n,t}(h)
\Bigr\}.
\]
Now Lemma \ref{lem: self normalize} implies
\begin{align}
\mathbb P\Bigl(
S_{1,k,n}<-\widetilde B^{(1)}_{k,n}(t)-h/2,\ \mathcal E_{k,n}
\Bigr)
&\le
C\bigl(1-\Phi(x_t+u^{(1)}_{k,n,t}(h))\bigr)
\nonumber\\
&\le
C\Phi(-x_t)e^{-x_tu^{(1)}_{k,n,t}(h)}
\nonumber\\
&\le
\frac{C}{t\sqrt{\ln t}}e^{-x_tu^{(1)}_{k,n,t}(h)},
\label{eq:batched-simple-S1-tail}
\end{align}
where the second inequality comes from the fact that for any $x,u\ge 0$,
\begin{equation}
\Phi(-(x+u))
=\int_{x+u}^{\infty}\frac{e^{-z^2/2}}{\sqrt{2\pi}}\,dz
\le\int_x^{\infty}\frac{e^{-uz}e^{-z^2/2}}{\sqrt{2\pi}}\,dz
\le e^{-ux}\Phi(-x),
\label{eq:gaussian-tail-shift-second}
\end{equation}
and the third inequality uses the Mills' inequality $\P(X>t)\leq \frac{1}{\sqrt{2\pi}}\frac{1}{t}\exp(-\frac{t^2}{2})$.

We now lower-bound $u^{(1)}_{k,n,t}(h)$.
First, we have
$
\frac{h\sqrt{m(n+N_k)}}{6\sqrt{6}\sigma_k}
\ge
\frac{h\sqrt{mn}}{6\sqrt{6}\sigma_k}.
$
Second, from Assumption \ref{asm: m and T},
$
m\ge 8(\ln T)^3\ell_{\hat R_k}^2,
$
so
\[
d^{(1)}_{k,n}
=
\frac{(m(n+N_k))^{1/6}}{\ell_{\hat R_k}^{1/3}}
\ge
\frac{(mn)^{1/6}}{\ell_{\hat R_k}^{1/3}}
\ge
n^{1/6}\sqrt{2\ln T}
\ge
n^{1/6}x_t.
\]
Hence
$
d^{(1)}_{k,n}-x_t
\ge
(n^{1/6}-1)x_t,
$
and 
\[
u^{(1)}_{k,n,t}(h)
\ge
\min\left\{
\frac{h\sqrt{mn}}{6\sqrt{6}\sigma_k},
\ (n^{1/6}-1)x_t.
\right\}
\]

Plugging this lower bound into \eqref{eq:batched-simple-S1-tail} and using
\[
e^{-x_t\min\{a,b\}}\le e^{-x_ta}+e^{-x_tb},
\]
we conclude that
\begin{align}
\mathbb P\Bigl(
S_{1,k,n}<-\widetilde B^{(1)}_{k,n}(t)-h/2,\ \mathcal E_{k,n}
\Bigr)
&\le
\frac{C_1}{t\sqrt{\ln t}}
\left[
\exp\!\left(-\frac{C_2h\sqrt{mn\ln t}}{\sigma_k}\right)
+
\exp\lb-C_2(n^{1/6}-1)\sqrt{\ln t}\rb
\right]
\label{eq:batched-simple-S1-final}
\end{align}
for some universal constant $C_1, C_2>0$.

\textbf{Step 2: the $S_{2,k,n}$ term.}
Define
\[
V_{\epsilon,k,n}^2
:=
\frac1{mn  }\sum_{s=1}^{mn}(\epsilon_{k,s}-\E_{k,n}[\epsilon])^2,
\]

The definition of residuals in Assumption \ref{asm: batch reward} implies that 
    $$
    R_{k,s} = \mu_k+ \beta_k\lb \hat R_{k,s}-\tilde \mu_k\rb + \epsilon_{k,s},
    $$
    plug this into the definition of $S_{2,k,n}$ \eqref{eqn: S1 S2 non Gaussian}, we have 
    \begin{equation}
        \begin{aligned}
            S_{2,k,n} =& \E_{k,n} [\epsilon] + ( \beta_{k} - \hat\beta_{k,n})(\E_{k,n}[\hat R] - \tilde\mu_k) \\ 
            =&\E_{k,n} [\epsilon] - \frac{\sum_{s=1}^{mn} \epsilon_{k,s}(\hat R_{k,s} - \E_{k,n} [\hat R] )}{\sum_{s=1}^{mn} (\hat R_{k,s} - \E_{k,n} [\hat R])^2}(\E_{k,n}[\hat R] - \tilde\mu_k)\\
            = &\E_{k,n} [\epsilon] - \frac{\sum_{s=1}^{mn} (\epsilon_{k,s} - \E_{k,n} [\epsilon])(\hat R_{k,s} - \E_{k,n} [\hat R] )}{\sum_{s=1}^{mn} (\hat R_{k,s} - \E_{k,n} [\hat R])^2}(\E_{k,n}[\hat R] - \tilde\mu_k).\\
        \end{aligned}
    \end{equation}
    
    Then we can write the following decomposition
\[
\sqrt{mn}\frac{S_{2,k,n}}{V_{\epsilon,k,n}}
=
A_{k,n}-\hat c_{k,n}B_{k,n},
\]
where
\[
A_{k,n}:=\frac{\sqrt{mn}\E_{k,n}[\epsilon]}{V_{\epsilon,k,n}},
\quad
B_{k,n}:=\frac{\sqrt{mn}(\E_{k,n}[\hat R]-\tilde\mu_k)}{\hat \sigma_{\hat R,k,n}}.
\]

By Lemma \ref{lem: reg residual} and the definition of $\hat\sigma_{\epsilon,k,n}^2$,
\[
\hat\sigma_{\epsilon,k,n}^2
=
(1-\hat c_{k,n}^2)V_{\epsilon,k,n}^2.
\]
Hence, on $\mathcal E_{k,n}$,
\[
V_{\epsilon,k,n}^2
=
\frac{\hat\sigma_{\epsilon,k,n}^2}{1-\hat c_{k,n}^2}
\le
\frac{(1+\epsilon)(1-\rho_k^2)\sigma_k^2}{1-\left(\frac1{3m^{1/3}}\right)^2}
\le 2\sigma_k^2.
\]

Since
\[
\widetilde B^{(2)}_{k,n}(t)
=
\frac{1+|\hat c_{k,n}|}{\sqrt{1-\hat c_{k,n}^2}}
\sqrt{\frac{2\hat\sigma_{\epsilon,k,n}^2\ln t}{mn-2\ln t}}
=
(1+|\hat c_{k,n}|)V_{\epsilon,k,n}\sqrt{\frac{2\ln t}{mn-2\ln t}},
\]
the event
\[
\Bigl\{
S_{2,k,n}<-\widetilde B^{(2)}_{k,n}(t)-h/2
\Bigr\}\cap \mathcal E_{k,n}
\]
implies
\[
A_{k,n}-\hat c_{k,n}B_{k,n}
\le
-(1+|\hat c_{k,n}|)\sqrt{\frac{2mn\ln t}{mn-2\ln t}}-\frac{\sqrt{mn}h}{2V_{\epsilon,k,n}}.
\]
Now suppose simultaneously that
\[
|A_{k,n}|<\sqrt{\frac{2mn\ln t}{mn-2\ln t}}+\frac{\sqrt{mn}h}{4\sqrt2\,\sigma_k}
\quad\text{and}\quad
|B_{k,n}|<\sqrt{\frac{2mn\ln t}{mn-2\ln t}}+\frac{\sqrt{mn}h}{4\sqrt2\,\sigma_k}.
\]
Then
\[
A_{k,n}-\hat c_{k,n}B_{k,n}
>
-(1+|\hat c_{k,n}|)\bigl(\sqrt{\frac{2mn\ln t}{mn-2\ln t}}+\frac{\sqrt{mn}h}{4\sqrt2\,\sigma_k}\bigr).
\]
On $\mathcal E_{k,n}$ we have $|\hat c_{k,n}|\le \frac1{3m^{1/3}}\le 1/3$, so
\[
(1+|\hat c_{k,n}|)\frac{h}{4\sqrt2\,\sigma_k}
\le
\frac43\cdot \frac{h}{4\sqrt2\,\sigma_k}
=
\frac{h}{3\sqrt2\,\sigma_k}
\le
\frac{h}{2V_{\epsilon,k,n}},
\]
because $V_{\epsilon,k,n}\le \sqrt2\,\sigma_k$ on $\mathcal E_{k,n}$.
Hence
\[
A_{k,n}-\hat c_{k,n}B_{k,n}
>
-(1+|\hat c_{k,n}|)\sqrt{\frac{2mn\ln t}{mn-2\ln t}}-\frac{\sqrt{mn}h}{2V_{\epsilon,k,n}},
\]
which is impossible.
Therefore,
\begin{align}
&\Bigl\{
S_{2,k,n}<-\widetilde B^{(2)}_{k,n}(t)-h/2
\Bigr\}\cap \mathcal E_{k,n}
\subseteq \nonumber\\
&\Bigl\{
|A_{k,n}|\ge \sqrt{\frac{2mn\ln t}{mn-2\ln t}}+\frac{\sqrt{mn}h}{4\sqrt2\,\sigma_k}
\Bigr\}
\cup
\Bigl\{
|B_{k,n}|\ge \sqrt{\frac{2mn\ln t}{mn-2\ln t}}+\frac{\sqrt{mn}h}{4\sqrt2\,\sigma_k}
\Bigr\}.
\label{eq:batched-simple-S2-reduce}
\end{align}

For the $A_{k,n}$ term, define
\[
U^{(2,\epsilon)}_{k,n}
:=
\frac{mn\,|\E_{k,n}[\epsilon]|}{\sqrt{\sum_{s=1}^{mn}\epsilon_{k,s}^2}}, \quad 
U^{(2,\hat R)}_{k,n}
:=
\frac{mn\,|\E_{k,n}[\hat R]-\tilde\mu_k|}
{\sqrt{\sum_{s=1}^{mn}(\hat R_{k,s}-\tilde\mu_k)^2}}.
\]
By Lemma \ref{lem: student to normalize} and the definition of $\psi$ in \eqref{eqn: define psi},
\[
|A_{k,n}|\ge \psi_{mn}(x)
\Leftrightarrow
U^{(2,\epsilon)}_{k,n}\ge x,
\quad 
|B_{k,n}|\ge \psi_{mn}(x)
\Leftrightarrow
U^{(2,\hat R)}_{k,n}\ge x.
\]
Define
\[
d^{(2)}_{k,n}
:=
\min\left\{
\frac{(mn)^{1/6}}{\ell_{\epsilon_k}^{1/3}},
\frac{(mn)^{1/6}}{\ell_{\hat R_k}^{1/3}}
\right\}.
\]
Under Assumption \ref{asm: m and T}, which which requires $m \geq 8 (\ln T)^3\max\{\ell_{\hat R_k},\ell_{\epsilon_k}\}$, for sufficiently large $T$,
\[
x_t = \sqrt{2\ln t} \le \sqrt{2\ln T}\le d^{(2)}_{k,n},
\quad
d^{(2)}_{k,n}\le \sqrt{mn/2}.
\]
Moreover, $
\psi_M'(x)=\frac{M\sqrt{M }}{(M-x^2)^{3/2}},
$
hence
$
\sup_{0\le x\le \sqrt{M/2}}\psi_M'(x)\le 3.
$
Therefore, for every
$
0\le u\le d^{(2)}_{k,n}-x_t,
$
the mean-value theorem gives
\[
\psi_{mn}(x_t+u)
\le
\psi_{mn}(x_t)+3u.
\]
Set
\[
u^{(2)}_{k,n,t}(h)
:=
\min\left\{
\frac{h\sqrt{mn}}{12\sqrt2\,\sigma_k},
\ d^{(2)}_{k,n}-x_t
\right\}.
\]
Then
\[
\psi_{mn}(x_t+u^{(2)}_{k,n,t}(h))
\le
\psi_{mn}(x_t)+\frac{\sqrt{mn}h}{4\sqrt2\,\sigma_k}
=
\sqrt{\frac{2mn\ln t}{mn-2\ln t}}+\frac{\sqrt{mn}h}{4\sqrt2\,\sigma_k}.
\]
Therefore, \eqref{eq:batched-simple-S2-reduce} implies
\[
\Bigl\{
S_{2,k,n}<-\widetilde B^{(2)}_{k,n}(t)-h/2
\Bigr\}\cap \mathcal E_{k,n}
\subseteq
\Bigl\{
U^{(2,\epsilon)}_{k,n}\ge x_t+u^{(2)}_{k,n,t}(h)
\Bigr\}
\cup
\Bigl\{
U^{(2,\hat R)}_{k,n}\ge x_t+u^{(2)}_{k,n,t}(h)
\Bigr\}.
\]
Applying Lemma \ref{lem: self normalize} to both self-normalized means gives
\begin{align}
\mathbb P\Bigl(
S_{2,k,n}<-\widetilde B^{(2)}_{k,n}(t)-h/2,\ \mathcal E_{k,n}
\Bigr)
&\le
C\bigl(1-\Phi(x_t+u^{(2)}_{k,n,t}(h))\bigr)
\nonumber\\
&\le
\frac{C}{t\sqrt{\ln t}}e^{-x_tu^{(2)}_{k,n,t}(h)}.
\label{eq:batched-simple-S2-tail}
\end{align}

By Assumption \ref{asm: m and T},
\[
m\ge 8(\ln T)^3\max\{\ell_{\epsilon_k}^2,\ell_{\hat R_k}^2\},
\]
so
\[
d^{(2)}_{k,n}
\ge
n^{1/6}\sqrt{2\ln T}
\ge
n^{1/6}x_t,
\]
and thus
\[
d^{(2)}_{k,n}-x_t\ge (n^{1/6}-1)x_t.
\]
Therefore,
\[
u^{(2)}_{k,n,t}(h)
\ge
\min\left\{
\frac{h\sqrt{mn}}{12\sqrt{2}\sigma_k},
\ (n^{1/6}-1)x_t
\right\}.
\]
Plugging this into \eqref{eq:batched-simple-S2-tail}, we obtain
\begin{align}
\mathbb P\Bigl(
S_{2,k,n}<-\widetilde B^{(2)}_{k,n}(t)-h/2,\ \mathcal E_{k,n}
\Bigr)
&\le
\frac{C_1}{t\sqrt{\ln t}}
\left[
\exp\!\left(-\frac{C_2h\sqrt{mn\ln t}}{\sigma_k}\right)
+
\exp\lb-C_2(n^{1/6}-1)\sqrt{\ln t}\rb
\right]
\label{eq:batched-simple-S2-final}
\end{align}
for some universal constant $C_1, C_2>0$.

Finally, combine \eqref{eq:batched-shifted-simple-union},
\eqref{eq:batched-simple-S1-final}, and \eqref{eq:batched-simple-S2-final}.
This proves \eqref{eq:batched-shifted-simple-main}.
\end{proof}

    Now we are ready to prove Theorem \ref{thm: regret bound non Gaussian}. The proof strategy is similar to Theorem \ref{thm: MLA-UCB} with some adaptation to the new settings. Again, we start with a more general theorem below, and we will then derive Theorem \ref{thm: regret bound non Gaussian} as a corollary.
    \begin{theorem}
    \label{thm: MLA-UCB non Gaussian}
        Under Assumption \ref{asm: batch reward} and \ref{asm: m and T}, for any $\epsilon\in(0,1/2)$ and $T\geq \max\{K+1, 5\}$, if the sample size of offline predictions satisfies 
\begin{equation}
    \label{eqn: sample size condition batch}
    mN_k\geq \frac{1}{\delta_k}\lb\frac{8\sigma_k^2\ln T}{\Delta_k^2}\frac{(1+\frac{1}{m^{1/3}})^2(1+\epsilon)}{(1-\epsilon)^2} + \frac{2\ln T}{m}\rb
\end{equation}
for some $\delta_k\leq1$, then the expected regret of Algorithm \ref{alg:PPUCB batch} can be bounded by:
\begin{equation}
\label{eqn: regret batch eps}
        \begin{aligned}
            &\frac{1}{m}\E[\reg]\\
            \leq& \sum_{k\neq k^\star}\lb\lb\sqrt{\delta_k}+\sqrt{1-\rho_k^2}\rb^2 \frac{2\sigma_k^2\ln T}{m\Delta_k^2} + \frac{C\sigma_k^2}{m\Delta_k^2\epsilon^2} + \frac{C\epsilon\sigma_k^2\ln T}{m\Delta_k^2} 
        +\frac{C}{(\ln T)^4\epsilon^2}+ C\sqrt{\ln T} \rb\Delta_k\\
        &+C\Delta_{\max}T\lb\frac{\exp(-Cm^{1/3})}{1-\exp(-Cm^{1/3})} + \frac{\exp(-Cm\epsilon^2)}{1-\exp(-Cm\epsilon^2)}\rb,
        \end{aligned}
    \end{equation}
    where $\Delta_{\max} = \max_{k\neq k^\star}\Delta_k$
    \end{theorem}
    \begin{proof}[Proof of Theorem \ref{thm: MLA-UCB non Gaussian}]
    For any $\epsilon\in(0,1/2)$, define $\tilde \epsilon_k = {\Delta_k\epsilon}$. Denote the confidence bound in Algorithm \ref{alg:PPUCB batch} as 
    \begin{equation}
        \label{eqn: def Bkt non Gaussian}
        \hat B_{k,n} (t) =  \lb1+\frac{1}{m^{1/3}}\rb\lb\sqrt{\frac{ 2\hat\sigma_{R,k,n}^2\ln t}{m(n+N_k)}}+\sqrt{\frac{2\hat\sigma_{\epsilon,k,n}^2\ln t}{mn}}\rb,
    \end{equation}
    and define the following events (recall the definition of $\mathcal{E}_{k,n}^i, i=1,2,3,4,5$ from \eqref{eqn: def Ekn batch})
    \begin{equation}
    \begin{aligned}
        \mathcal{A}_{k,t} =& \{A_{t} = k\} \subset \{\mukntk +\hat B_{k,n_{k,t}}(t)\geq  \hat \mu_{k^\star,n_{k^\star,t}}^\mla +\hat B_{k^\star,n_{k^\star,t}}(t) \}, \\
        \mathcal{B}_{k,t} =& \{\mu_{k} +\hat B_{k,n_{k,t}}(t) + \tilde \epsilon_k\geq\mu^\star\}, \\
        \mathcal{D}_{k,t} =& \{\mukntk \leq \mu_k+\tilde \epsilon_k/2 \},\\ 
        \mathcal{H}_{k,t} = & \mathcal{E}_{k,n_{k,t}}\cap \mathcal{E}_{k^\star,n_{k^\star,t}}. 
    \end{aligned}
    \end{equation}
    Then we can express the expected regret as 
    \begin{equation}
        \E[\operatorname{Reg_T}] = \E\sum_{k\neq k^\star}mn_{k,T}\Delta_k = m\sum_{k\neq k^\star}\Delta_k \E\sum_{t=1}^T \I_{\mathcal{A}_{k,t}}. 
    \end{equation}
    Furthermore, we can decompose the expectation as follows
    \begin{equation}
    \label{eqn: regret decomposition batch}
    \begin{aligned}
        \E\sum_{t=1}^T\I_{\mathcal{A}_{k,t}} =& \E\sum_{t=K}^T\I_{\mathcal{A}_{k,t}\cap \mathcal{B}_{k,t}\cap \mathcal{D}_{k,t}\cap \mathcal{H}_{k,t} } +\E\sum_{t=K}^T\I_{\mathcal{A}_{k,t}\cap \mathcal{B}_{k,t}^C\cap \mathcal{D}_{k,t}\cap \mathcal{H}_{k,t} } \\
        &+ \E\sum_{t=K}^T\I_{\mathcal{A}_{k,t}\cap \mathcal{D}_{k,t}^C\cap \mathcal{H}_{k,t} }+ \E\sum_{t=K}^T\I_{\mathcal{A}_{k,t}\cap \mathcal{H}_{k,t}^C } + 1,
    \end{aligned}
    \end{equation}
    and we are going to bound these terms separately in the following analysis. 
    
    \textbf{Bound the first term of \eqref{eqn: regret decomposition batch}} For the first term of \eqref{eqn: regret decomposition batch},
    \begin{equation}
    \begin{aligned}
    \label{eqn: unknown var term 1 batch}
        &\E\sum_{t=K}^T\I_{\mathcal{A}_{k,t}\cap \mathcal{B}_{k,t}\cap \mathcal{D}_{k,t}\cap \mathcal{H}_{k,t} } \overset{(1)}{\le} \E\sum_{t=K}^T\I_{ \mathcal{A}_{k,t} }\I_{ \mathcal{H}_{k,t} } \I\{\Delta_k (1-\epsilon) \leq \hat B_{k,n_{k,t}}\} \\
        \overset{(2)}{=} & \E\sum_{t=K}^T\I_{ \mathcal{A}_{k,t} }\I_{ \mathcal{H}_{k,t} } \I\left\{\Delta_k (1-\epsilon) \leq  \lb1+\frac{1}{m^{1/3}}\rb\lb\sqrt{\frac{ 2\hat\sigma_{R, k,n_{k,t}}^2\ln t}{m(n_{k,t}+N_k)}}+\sqrt{\frac{2\hat\sigma_{\epsilon,k,n_{k,t}}^2\ln t}{mn_{k,t}}}\rb\right\} \\ 
        \overset{(3)}{\le} & \E\sum_{t=K}^T\I_{ \mathcal{A}_{k,t} }\I_{ \mathcal{H}_{k,t} } \I\left\{\Delta_k \leq  \frac{1+\frac{1}{m^{1/3}}}{1-\epsilon}\lb\sqrt{\frac{ 2(1+\epsilon)\sigma_{k}^2\ln t}{m(n_{k,t}+N_k)}}+\sqrt{\frac{2(1+\epsilon)(1-\rho_k^2)\sigma_{k}^2\ln t}{mn_{k,t}}}\rb \right\} \\
        \overset{(4)}{\le} & \E\sum_{t=K}^T\I_{ \mathcal{A}_{k,t} }\I\left\{\Delta_k \leq  \frac{1+\frac{1}{m^{1/3}}}{1-\epsilon}\lb\sqrt{\frac{ 2(1+\epsilon)\sigma_{k}^2\ln t}{m(n_{k,t}+N_k)}}+\sqrt{\frac{2(1+\epsilon)(1-\rho_k^2)\sigma_{k}^2\ln t}{mn_{k,t}}}\rb \right\}.
    \end{aligned}
    \end{equation}
    Here, the inequality (1) is from the definition of $\mathcal{B}_{k,t}$ and the fact that $\I_{ \mathcal{D}_{k,t} }\leq 1$; the equality (2) is from the definition of $\hat B_{k,n}$ in \eqref{eqn: def Bkt non Gaussian}; the inequality (3) is from the definition of $\mathcal{H}_{k,t}$; and the inequality (4) uses the fact that $\I_{ \mathcal{H}_{k,t} }\leq 1$.

    Define the event $$
    \mathcal{G}_{k,t} = \left\{\Delta_k \leq  \frac{1+\frac{1}{m^{1/3}}}{1-\epsilon}\lb\sqrt{\frac{ 2(1+\epsilon)\sigma_{k}^2\ln t}{m(n_{k,t}+N_k)}}+\sqrt{\frac{2(1+\epsilon)(1-\rho_k^2)\sigma_{k}^2\ln t}{mn_{k,t}}}\rb \right\}.
    $$
    Then under the sample size condition \eqref{eqn: sample size condition batch}
    $$N_k\geq \frac{1}{\delta_k}\lb\frac{8\sigma_k^2\ln T}{m\Delta_k^2}\frac{(1+\frac{1}{m^{1/3}})^2(1+\epsilon)}{(1-\epsilon)^2} + \frac{2\ln T}{m}\rb, $$
    we can control the ratio of online and offline samples on the event $\mathcal{G}_{k,t}$ by
    \begin{equation}
    \begin{aligned}
    \label{eqn: control sample ratio batch}
        &\mathcal{G}_{k,t} \subseteq  \left\{\Delta_k \leq  2\frac{1+\frac{1}{m^{1/3}}}{1-\epsilon}\sqrt{\frac{ 2(1+\epsilon)\sigma_{k}^2\ln t}{mn_{k,t}}} \right\}\\ 
        \subseteq& \left\{n_{k,t}\leq \frac{8\sigma_k^2\ln T}{m\Delta_k^2}\frac{(1+\frac{1}{m^{1/3}})^2(1+\epsilon)}{(1-\epsilon)^2} +\frac{2\ln T}{m}\right\}\subseteq \left\{\frac{n_{k,t}}{n_{k,t}+N_k}\leq \delta_k\right\}.
    \end{aligned}
    \end{equation}
    Combine it with \eqref{eqn: unknown var term 1 batch}, we have 
    \begin{equation}
        \begin{aligned}
            &\E\sum_{t=K}^T\I_{\mathcal{A}_{k,t}\cap \mathcal{B}_{k,t}\cap \mathcal{D}_{k,t}\cap \mathcal{H}_{k,t} } \overset{(1)}{\le}  \E\sum_{t=K}^T\I_{ \mathcal{A}_{k,t} }\I_{ \mathcal{G}_{k,t} } \overset{(2)}{=}  \E\sum_{t=K}^T\I_{ \mathcal{A}_{k,t} }\I_{ \mathcal{G}_{k,t} }\I\left\{\frac{n_{k,t}}{n_{k,t}+N_k}\leq \delta_k\right\}\\
            \overset{(3)}{\le} &\E\sum_{t=K}^T\I_{ \mathcal{A}_{k,t} }\I\left\{\Delta_k \leq  \frac{1+\frac{1}{m^{1/3}}}{1-\epsilon}\lb\sqrt{\frac{ 2(1+\epsilon)\sigma_{k}^2\ln t}{mn_{k,t}/\delta_k}}+\sqrt{\frac{2(1+\epsilon)(1-\rho_k^2)\sigma_{k}^2\ln t}{mn_{k,t}}}\rb \right\}.\\
            \overset{(4)}{\le}  &\E\sum_{t=K}^T\I_{ \mathcal{A}_{k,t} } \I\left\{mn_{k,t}\leq \frac{(1+\frac{1}{m^{1/3}})^2(1+\epsilon)}{(1-\epsilon)^2}\lb\sqrt{\delta_k}+\sqrt{1-\rho_k^2}\rb^2 \frac{2\sigma_k^2\ln T}{\Delta_k^2}  \right\} \\
            \overset{(5)}{\le}&\E\sum_{n=1}^\infty \I\left\{mn\leq \frac{(1+\frac{1}{m^{1/3}})^2(1+\epsilon)}{(1-\epsilon)^2}\lb\sqrt{\delta_k}+\sqrt{1-\rho_k^2}\rb^2 \frac{2\sigma_k^2\ln T}{\Delta_k^2}  \right\}\\
            =& \frac{1}{m}\lb\frac{(1+\frac{1}{m^{1/3}})^2(1+\epsilon)}{(1-\epsilon)^2}\lb\sqrt{\delta_k}+\sqrt{1-\rho_k^2}\rb^2 \frac{2\sigma_k^2\ln T}{\Delta_k^2} \rb
        \end{aligned}
    \end{equation}
    Here, the inequality (1) is by \eqref{eqn: unknown var term 1 batch} and the definition of $\mathcal{G}_{k,t}$; equality (2) is by \eqref{eqn: control sample ratio batch}; inequality (3) and (4) are straightforward algebra; inequality (5) uses the fact that $\mathcal{A}_{k,t} = \{n_{k,t+1} = n_{k,t}+1\}$ and $ \I\left\{mn_{k,t}\leq \frac{(1+\frac{1}{m^{1/3}})^2(1+\epsilon)}{(1-\epsilon)^2}\lb\sqrt{\delta_k}+\sqrt{1-\rho_k^2}\rb^2 \frac{2\sigma_k^2\ln T}{\Delta_k^2} \right\}$ depends on $t$ only through $n_{k,t}$, thus we can transform the sum over $t$ into the sum over $n$.

    \textbf{Bound the second term of \eqref{eqn: regret decomposition batch}} For the second term of \eqref{eqn: regret decomposition batch}, notice that 
    \begin{equation}
        \begin{aligned}
        \label{eqn: tilde hat Bkt}
            &\mathcal{H}_{k,t}\subset\mathcal{E}_{k^\star,n_{k^\star,t}}^3\cap \mathcal{E}_{k^\star,n_{k^\star,t}}^4\cap \mathcal{E}_{k^\star,n_{k^\star,t}}^5\\
            =&\left\{|\hat c_{k^\star,n_{k^\star,t}}|\leq \frac{1}{3m^{1/3}}, \sigma_{\hat R, k^\star,n_{k^\star,t}, \all}^2\leq \lb1+\frac{1}{3m^{1/3}}\rb\tilde\sigma_{k^\star}^2, \hat \sigma_{\hat R, k^\star,n_{k^\star,t}}^2\geq \lb1-\frac{1}{3m^{1/3}}\rb\tilde\sigma_{k^\star}^2\right\}\\
            \overset{(1)}{\subset}&\left\{\frac{1+|\hat c_{k^\star,n_{k^\star,t}}|}{\sqrt{1-\hat c_{k^\star,n_{k^\star,t}}^2}}\leq 1+\frac{1}{2m^{1/3}}, \frac{\hat \sigma_{\hat R, k^\star,n_{k^\star,t} \all}}{\hat \sigma_{\hat R, k^\star,n_{k^\star,t}}}\leq 1+\frac{1}{2m^{1/3}}\right\}\\
            \overset{(2)}{\subset}&\left\{\tilde B_{k^\star,n_{k^\star,t}}(t)\leq \hat B_{k^\star,n_{k^\star,t}}(t)\right\},
        \end{aligned}
    \end{equation}
    where (1) uses the algebra fact that $\frac{1+x}{\sqrt{1-x^2}}=\sqrt\frac{1+x}{1-x}\leq 1+1.5x, \forall x\in[0,1/3]$, and (2) uses the fact that for $m\geq 8(\ln T)^4\ge 8(\ln T)^3$ when $T\ge 3$,  $\sqrt\frac{mn_{k,t}}{mn_{k,t}-2\ln t}\leq \sqrt\frac{m}{m-2\ln T}\leq \sqrt\frac{1}{1-m^{-2/3}}\leq 1+\frac{1}{3}m^{-1/3}$, and $(1+\frac{1}{2m^{1/3}})(1+\frac{1}{3m^{1/3}})\leq 1+\frac{1}{m^{1/3}}, \forall m\ge 1$, which implies
    \begin{equation*}
        \begin{aligned}
\widetilde B_{k,n}(t)&:= \frac{\hat\sigma_{\hat R,k,n,\all}}{\hat\sigma_{\hat R,k,n}}
\sqrt{\frac{2\hat\sigma_{R,k,n}^2\ln t}{m(n+N_k)-2\ln t}}+ \frac{1+|\hat c_{k,n}|}{\sqrt{1-\hat c_{k,n}^2}}
\sqrt{\frac{2\hat\sigma_{\epsilon,k,n}^2\ln t}{mn-2\ln t}} \\
\leq &\lb1+\frac{1}{2m^{1/3}}\rb 
\lb \sqrt{\frac{2\hat\sigma_{R,k,n}^2\ln t}{m(n+N_k)-2\ln t}}+ 
\sqrt{\frac{2\hat\sigma_{\epsilon,k,n}^2\ln t}{mn-2\ln t}}\rb\\
\leq &\lb1+\frac{1}{2m^{1/3}}\rb \lb1+\frac{1}{3m^{1/3}}\rb
\lb \sqrt{\frac{2\hat\sigma_{R,k,n}^2\ln t}{m(n+N_k)}}+ 
\sqrt{\frac{2\hat\sigma_{\epsilon,k,n}^2\ln t}{mn}}\rb\\
\leq &\lb1+\frac{1}{m^{1/3}}\rb
\lb \sqrt{\frac{2\hat\sigma_{R,k,n}^2\ln t}{m(n+N_k)}}+ 
\sqrt{\frac{2\hat\sigma_{\epsilon,k,n}^2\ln t}{mn}}\rb\\
=&\hat B_{k,n}(t).
        \end{aligned}
    \end{equation*}
    Therefore, the second term of \eqref{eqn: regret decomposition batch} can be bounded by
    \begin{equation}
    \begin{aligned}
        &\E\sum_{t=K}^T\I_{\mathcal{A}_{k,t}\cap \mathcal{B}_{k,t}^C\cap \mathcal{D}_{k,t}\cap \mathcal{H}_{k,t} } \\ 
        =& \E\sum_{t=K}^T\I\left\{ \mukntk +\hat B_{k,n_{k,t}}(t)\geq  \hat \mu_{k^\star,n_{k^\star,t}}^\mla +\hat B_{k^\star,n_{k^\star,t}}(t), \mu_{k} +\hat B_{k,n_{k,t}}(t) + \tilde \epsilon_k<\mu^\star,  \mukt \leq \mu_k+\tilde \epsilon_k/2\right\}\I_{\mathcal{H}_{k,t}} \\
        \leq& \E\sum_{t=K}^T\I\left\{\mu^\star > \hat \mu_{k^\star,n_{k^\star,t}}^\mla + \hat B_{k^\star,n_{k^\star,t}}(t)+\tilde \epsilon_k/2\right\}\I_{\mathcal{H}_{k,t}} \\
        \overset{(1)}{\le} &\E\sum_{t=K}^T\I\left\{\mu^\star > \hat \mu_{k^\star,n_{k^\star,t}}^\mla + \tilde B_{k^\star,n_{k^\star,t}}(t)+\tilde \epsilon_k/2\right\}\I_{\mathcal{E}_{k^\star,n_{k^\star,t}}}\\
        = &\sum_{t=K}^T\P(\mu^\star > \hat \mu_{k^\star,n_{k^\star,t}}^\mla +\tilde B_{k^\star,n_{k^\star,t}}(t)+\tilde \epsilon_k/2, \mathcal{E}_{k^\star,n_{k^\star,t}})
        \\
        \leq &\sum_{t=K}^T\sum_{n=1}^T\P(\mu^\star > \hat \mu_{k^\star,n}^\mla +\tilde B_{k^\star,n}(t)+\tilde \epsilon_k/2, \mathcal{E}_{k^\star,n})\\
        \overset{(2)}{\le}&
        \sum_{t=K}^T\sum_{n=1}^T\frac{C_1}{t\sqrt{\ln t}}
\left[
\exp\!\left(-\frac{C_2\Delta_k\epsilon\sqrt{mn\ln t}}{\sigma_{k^\star}}\right)
+
\exp(-C_2(n^{1/6}-1)\sqrt{\ln t})\right]\\
    \end{aligned}
    \end{equation}
    Here, the inequality (1) is from \eqref{eqn: tilde hat Bkt}, and the inequality (2) is from Lemma \ref{lem:batched-shifted-simple-kn}.
    For the first series, let
\[
a_t:=C_2\frac{\Delta_k\epsilon\sqrt{m\ln t}}{\sigma_{k^\star}}.
\]
Then
\[
\sum_{n=1}^T e^{-a_t\sqrt n}
\le
1+\int_0^\infty e^{-a_t\sqrt x}\,dx
=
1+\frac{2}{a_t^2}
\le
C\left(
1+\frac{\sigma_{k^\star}^2}{m \Delta_k^2\epsilon^2\ln t}
\right).
\] 
For the second series, since $t\geq K\geq 2$,
\[
\sum_{n=1}^T \exp(-C(n^{1/6}-1)\sqrt{\ln t})
\le
\sum_{n=1}^\infty \exp(-C(n^{1/6}-1))
<\infty.
\]
Therefore we conclude that 
\begin{equation}
    \begin{aligned}
        &\E\sum_{t=K}^T\I_{\mathcal{A}_{k,t}\cap \mathcal{B}_{k,t}^C\cap \mathcal{D}_{k,t}\cap \mathcal{H}_{k,t} } \leq \sum_{t=K}^T\sum_{n=1}^T\frac{C_1}{t\sqrt{\ln t}}
\left[
\exp\!\left(-\frac{C_2\Delta_k\epsilon\sqrt{mn\ln t}}{\sigma_{k^\star}}\right)
+
\exp(-C_2(n^{1/6}-1)\sqrt{\ln t})\right]\\
\leq &\sum_{t=K}^T \frac{C}{t\sqrt{\ln t}}\lb 1+ \frac{\sigma_{k^\star}^2}{m \Delta_k^2\epsilon^2\ln t}\rb \leq C\sqrt{\ln T} + \frac{C\sigma_{k^\star}^2}{m \Delta_k^2\epsilon^2}\leq C\sqrt{\ln T} + \frac{C\sigma_{k}^2}{m \Delta_k^2\epsilon^2}
    \end{aligned}
\end{equation}

    \textbf{Bound the third term of \eqref{eqn: regret decomposition batch}} For the third term of \eqref{eqn: regret decomposition batch}, 
    \begin{equation}
    \begin{aligned}
    \label{eqn: batch term 3 first step}
        &\E\sum_{t=K}^T\I_{\mathcal{A}_{k,t}\cap \mathcal{D}_{k,t}^C\cap \mathcal{H}_{k,t} } \leq \E\sum_{t=K}^T\I_{ \mathcal{A}_{k,t} }\I_{\cap_{i=1}^5\mathcal{E}_{k,n_{k,t}}^i}\I_{ \mathcal{D}_{k,t}^C } \overset{(1)}{\leq}  \sum_{n=1}^\infty\E\left[\I_{\cap_{i=1}^5\mathcal{E}_{k,n}^i} \I\left\{\mukt> \mu_k+\tilde \epsilon_k/2\right\} \right]\\
    \end{aligned}
    \end{equation}
    Here, inequality (1) is from the fact that $\mathcal{A}_{k,t} = \{n_{k,t+1} = n_{k,t}+1\}$ and $\mathcal{D}_{k,t} = \mathcal{D}_{k,t'}$ for any round $t, t'$ such that  $n_{k,t} = n_{k,t'}$.

    Use Proposition \ref{prop: intercept}, we know that 
    \begin{equation}
    \begin{aligned}
        \mukt-\mu_k = & \E_{k,n}[R]-\mu_k - \hat \beta_{k,n}(\E_{k,n}[\hat R] - \E_{k,n}^\all[\hat R]),
    \end{aligned}
    \end{equation}
    which implies 
    \begin{equation}
    \begin{aligned}
        |\mukt-\mu_k| \leq & |\E_{k,n}[R]-\mu_k| + |\hat \beta_{k,n}||\E_{k,n}[\hat R] -\tilde\mu_k|+ |\hat \beta_{k,n}||\E_{k,n}^\all[\hat R]-\tilde\mu_k|.
    \end{aligned}
    \end{equation}
    Since $|\hat \beta_{k,n}| = |\frac{\hat \sigma_{R,\hat R, k,n}}{\hat \sigma_{R, k,n}\hat \sigma_{\hat R, k,n}}|\frac{\hat \sigma_{R, k,n}}{\hat \sigma_{\hat R, k,n}} \leq \frac{\hat \sigma_{R, k,n}}{\hat \sigma_{\hat R, k,n}}$, we know that $\cap_{i=1}^5\mathcal{E}_{k,n}^i \subset\{|\hat \beta_{k,n}|\leq \sqrt{\frac{1+\epsilon}{1-\frac{1}{3m^{1/3}}}}\frac{\sigma_k}{\tilde\sigma_k}\}\subset\{|\hat \beta_{k,n}|\leq 2\frac{\sigma_k}{\tilde\sigma_k}\}$. Therefore, 
    \begin{equation}
        \begin{aligned}
            &\I_{ \cap_{i=1}^5\mathcal{E}_{k,n}^i} \I\left\{\mukt> \mu_k+\tilde \epsilon_k/2\right\}\\
            \leq& \I_{ \cap_{i=1}^5\mathcal{E}_{k,n}^i } \I\left\{|\E_{k,n}[R]-\mu_k|\geq \frac{\tilde\epsilon_k}{6} \text{ or } |\hat \beta_{k,n}||\E_{k,n}[\hat R] -\tilde\mu_k|\geq \frac{\tilde\epsilon_k}{6}\text{ or }|\hat \beta_{k,n}||\E_{k,n}^\all[\hat R]-\tilde\mu_k|\geq \frac{\tilde\epsilon_k}{6}\right\}\\
            \leq&  \I\left\{|\E_{k,n}[R]-\mu_k|\geq \frac{\tilde\epsilon_k}{6}\right\}+\I\left\{2\frac{\sigma_k}{\tilde\sigma_k}|\E_{k,n}[\hat R] -\tilde\mu_k|\geq \frac{\tilde\epsilon_k}{6}\right\}+\I\left\{2\frac{\sigma_k}{\tilde\sigma_k}|\E_{k,n}^\all[\hat R]-\tilde\mu_k|\geq \frac{\tilde\epsilon_k}{6}\right\}
        \end{aligned}
    \end{equation}
    Plug back to \eqref{eqn: batch term 3 first step}, we have 
    \begin{equation}
    \begin{aligned}
    \label{eqn: batch term 3}
        &\E\sum_{t=K}^\infty \I_{\mathcal{A}_{k,t}\cap \mathcal{D}_{k,t}^C\cap \mathcal{H}_{k,t} } \leq  \sum_{n=1}^\infty \E\left[\I_{\cap_{i=1}^5\mathcal{E}_{k,n}^i } \I\left\{\mukt> \mu_k+\tilde \epsilon_k/2\right\} \right]\\
        \leq& \sum_{n=1}^\infty  \lb\P\lb|\E_{k,n}[R]-\mu_k|\geq \frac{\tilde\epsilon_k}{6}\rb+\P\lb2\frac{\sigma_k}{\tilde\sigma_k}|\E_{k,n}[\hat R] -\tilde\mu_k|\geq \frac{\tilde\epsilon_k}{6}\rb+\P\lb2\frac{\sigma_k}{\tilde\sigma_k}|\E_{k,n}^\all[\hat R]-\tilde\mu_k|\geq \frac{\tilde\epsilon_k}{6}\rb\rb \\
        \overset{(1)}{\leq}& C\sum_{n=1}^\infty \lb \exp\lb-\frac{Cmn\tilde\epsilon_k^2}{\sigma_k^2}\rb + \exp\lb-\frac{Cm(n+N_k)\tilde\epsilon_k^2}{\sigma_k^2}\rb\rb\leq C\frac{1}{\exp\lb\frac{Cm\epsilon^2\Delta_k^2}{\sigma_k^2}\rb - 1} \leq C\frac{\sigma_k^2}{m\epsilon^2 \Delta_k^2}\\
    \end{aligned}
    \end{equation}
    where the inequality (1) uses Hoeffding's inequality.

    \textbf{Bound the fourth term of \eqref{eqn: regret decomposition batch}} For the fourth term of \eqref{eqn: regret decomposition batch}, notice that 
    \begin{equation}
        \label{eqn: batch term 4 first step}
        \I_{ \mathcal{A}_{k,t} }\I_{ \mathcal{H}_{k,t}^C}\leq \I_{ \mathcal{A}_{k,t} }\I_{(\cap_{i=1}^5\mathcal{E}_{k,n_{k,t}}^i)^C}+\I_{ \mathcal{A}_{k,t} }\I_{(\cap_{i=1}^5\mathcal{E}_{k^\star,n_{k^\star,t}}^i)^C},
    \end{equation}
 and we will handle two terms separately. For event $\mathcal{E}_{k,n}^1, \mathcal{E}_{k,n}^3, \mathcal{E}_{k,n}^4$, apply Lemma \ref{lem: variance concentration}, we know that 
    \begin{equation}
        \begin{aligned}
        \label{eqn: event E 1}
            \P((\mathcal{E}_{k,n}^1)^C)\leq 4\exp(-Cmn\epsilon^2), \P((\mathcal{E}_{k,n}^3)^C)\leq 4\exp(-Cm^{1/3}n), \P((\mathcal{E}_{k,n}^4)^C)\leq 4\exp(-Cm^{1/3}n).
        \end{aligned}
    \end{equation}
    For event $\mathcal{E}_{k,n}^2$, notice that Lemma \ref{lem: reg residual} implies 
    \begin{equation}
        \hat{\sigma}^2_{\epsilon,k,n} 
            =  \frac{1-\correps^2}{mn  }\sum_{s=1}^{mn} (\epsilon_{k,s} - \E_{k,n} [\epsilon])^2\leq \frac{1}{mn  }\sum_{s=1}^{mn} (\epsilon_{k,s} - \E_{k,n} [\epsilon])^2,
    \end{equation}
    therefore apply Lemma \ref{lem: variance concentration} we have
    \begin{equation}
        \begin{aligned}
        \label{eqn: event E 2}
            &\P((\mathcal{E}_{k,n}^2)^C) \leq \P\lb\frac{|\frac{1}{mn  }\sum_{s=1}^{mn} (\epsilon_{k,s} - \E_{k,n} [\epsilon])^2-(1-\rho_k^2)\sigma_k^2|}{(1-\rho_k^2)\sigma_k^2}\geq \epsilon\rb\leq  4\exp(-Cmn\epsilon^2)
        \end{aligned}
    \end{equation}
    For event $\mathcal{E}_{k,n}^5$, apply Lemma \ref{lem:corr_concentration},
    \begin{equation}
    \label{eqn: event E 5}
        \P((\mathcal{E}_{k,n}^5)^C)=\P\lb|\correps|\geq \frac{1}{3m^{1/3}}\rb \leq 6\exp(-Cm^{1/3}n)
    \end{equation}
    Combining \eqref{eqn: event E 1}, \eqref{eqn: event E 2}, and \eqref{eqn: event E 5}, we have 
    \begin{equation}
    \begin{aligned}
    \label{eqn: batch term 4 second step}
        &\E[\I_{(\cap_{i=1}^5\mathcal{E}_{k,n}^i)^C}]\leq \P((\mathcal{E}_{k,n}^1)^C)+\P((\mathcal{E}_{k,n}^2)^C)+\P((\mathcal{E}_{k,n}^3)^C) + \P((\mathcal{E}_{k,n}^4)^C) + \P((\mathcal{E}_{k,n}^5)^C)\\
        \leq &C(\exp(-Cm^{1/3}n)+\exp(-Cmn\epsilon^2)).
    \end{aligned}
    \end{equation}
    Hence the first term of \eqref{eqn: batch term 4 first step} can be bounded by
    \begin{equation}
    \begin{aligned}
    \label{eqn: batch term 4 term 1}
        &\E\sum_{t=K}^T\I_{ \mathcal{A}_{k,t} } \I_{(\cap_{i=1}^5\mathcal{E}_{k,n_{k,t}}^i)^C} 
        \overset{(1)}{\leq} \E\sum_{n=1}^T \I_{(\cap_{i=1}^5\mathcal{E}_{k,n}^i)^C}
        \overset{(2)}{\leq}  \sum_{n=1}^TC(\exp(-Cm^{1/3}n)+\exp(-Cmn\epsilon^2)) \\
        \leq  &C\lb \frac{1}{m\epsilon^2}+\frac{1}{m^{1/3}}\rb
    \end{aligned}
    \end{equation}
    Here, inequality (1) is from the fact that $\mathcal{A}_{k,t} = \{n_{k,t+1} = n_{k,t}+1\}$ and $\mathcal{E}_{k,t} = \mathcal{E}_{k,t'}$ for any round $t, t'$ such that  $n_{k,t} = n_{k,t'}$;  inequality (2) is from \eqref{eqn: batch term 4 second step}.
    For the second term of \eqref{eqn: batch term 4 first step}, we consider the aggregation over all $k\neq k^\star$
    \begin{equation}
        \begin{aligned}
        \label{eqn: bound variance remainder}
            &\sum_{k\neq k^\star}\Delta_k\E\sum_{t=K}^T\I_{ \mathcal{A}_{k,t} } \I_{(\cap_{i=1}^5\mathcal{E}_{k^\star,n_{k^\star,t}}^i)^C} \leq \Delta_{\max}\sum_{t=K}^T\E[\I_{(\cap_{i=1}^5\mathcal{E}_{k^\star,n_{k^\star,t}}^i)^C}] \ \leq \Delta_{\max}\sum_{t=K}^T\sum_{n=1}^T\E[\I_{(\cap_{i=1}^5\mathcal{E}_{k^\star,n}^i)^C}]\\
            \leq & \Delta_{\max}\sum_{t=K}^T\sum_{n=1}^TC(\exp(-Cm^{1/3}n)+\exp(-Cmn\epsilon^2))\\
            \leq & C\Delta_{\max}T\lb\frac{\exp(-Cm^{1/3})}{1-\exp(-Cm^{1/3})} + \frac{\exp(-Cm\epsilon^2)}{1-\exp(-Cm\epsilon^2)}\rb
        \end{aligned}
    \end{equation}
    
    \textbf{Combine bounds} Combining bounds on all four terms together, we obtain that 
    \begin{equation}
    \label{eqn: num pulls bound batch}
    \begin{aligned}
        &\E[n_{k,T}] =\E\sum_{t=1}^T\I_{\mathcal{A}_{k,t}}\\ \leq & \frac{(1+\frac{1}{m^{1/3}})^2(1+\epsilon)}{(1-\epsilon)^2}\lb\sqrt{\delta_k}+\sqrt{1-\rho_k^2}\rb^2 \frac{2\sigma_k^2\ln T}{m\Delta_k^2} + \frac{C\sigma_{k}^2}{m \Delta_k^2\epsilon^2}\\
        &+ C\lb\frac{1}{m\epsilon^2}+\frac{1}{m^{1/3}}\rb+C\sqrt{\ln T}+ 1 + \E\sum_{t=K}^T\I_{ \mathcal{A}_{k,t} } \I_{(\cap_{i=1}^5\mathcal{E}_{k^\star,n_{k^\star,t}}^i)^C}\\
        \overset{(1)}{\leq } &\frac{1+\epsilon}{(1-\epsilon)^2}\lb\sqrt{\delta_k}+\sqrt{1-\rho_k^2}\rb^2 \frac{2\sigma_k^2\ln T}{m\Delta_k^2} + \frac{C\sigma_k^2}{m\Delta_k^2\epsilon^2} \\
        &+\frac{C}{(\ln T)^4\epsilon^2}+ C\sqrt{\ln T} + \E\sum_{t=K}^T\I_{ \mathcal{A}_{k,t} } \I_{(\cap_{i=1}^5\mathcal{E}_{k^\star,n_{k^\star,t}}^i)^C} \\
        \overset{(2)}{\leq } &\lb\sqrt{\delta_k}+\sqrt{1-\rho_k^2}\rb^2 \frac{2\sigma_k^2\ln T}{m\Delta_k^2} + \frac{C\epsilon\sigma_k^2\ln T}{m\Delta_k^2}+ \frac{C\sigma_k^2}{m\Delta_k^2\epsilon^2} \\
        &+\frac{C}{(\ln T)^4\epsilon^2}+ C\sqrt{\ln T} + \E\sum_{t=K}^T\I_{ \mathcal{A}_{k,t} } \I_{(\cap_{i=1}^5\mathcal{E}_{k^\star,n_{k^\star,t}}^i)^C}
    \end{aligned}
    \end{equation}
    here the inequality (1) uses the fact that $m=\Omega((\ln T)^4),\lb\sqrt{\delta_k}+\sqrt{1-\rho_k^2}\rb^2\leq 4 $, thus expansion $\frac{1}{m^{1/3}}\frac{1+\epsilon}{(1-\epsilon)^2} \lb\sqrt{\delta_k}+\sqrt{1-\rho_k^2}\rb^2 \frac{2\sigma_k^2\ln T}{m\Delta_k^2}\leq C\frac{\sigma_k^2}{m\Delta_k^2}$ is negligible, and we use the fact that $\frac{1}{m\epsilon^2}+\frac{1}{m^{1/3}}\leq C\lb\frac{1}{(\ln T)^4\epsilon^2}+\frac{1}{(\ln T)^{4/3}}\rb$;  the inequality (2) uses the algebra fact that $\frac{1+x}{(1-x)^2}\leq 1+10x, \forall x\in (0, 1/2)$. 
    Now, we obtain the desired regret bound using \eqref{eqn: bound variance remainder}
    \begin{equation}
        \begin{aligned}
            &\frac{1}{m}\E[\reg]=\sum_{k\neq k^\star}\E[n_{k,T}]\Delta_k\\
            \leq& \sum_{k\neq k^\star}\lb\lb\sqrt{\delta_k}+\sqrt{1-\rho_k^2}\rb^2 \frac{2\sigma_k^2\ln T}{m\Delta_k^2} + \frac{C\sigma_k^2}{m\Delta_k^2\epsilon^2} + \frac{C\epsilon\sigma_k^2\ln T}{m\Delta_k^2} 
    +\frac{C}{(\ln T)^4\epsilon^2}+ C\sqrt{\ln T} \rb\Delta_k\\
        &+ C\Delta_{\max}T\lb\frac{\exp(-Cm^{1/3})}{1-\exp(-Cm^{1/3})} + \frac{\exp(-Cm\epsilon^2)}{1-\exp(-Cm\epsilon^2)}\rb
        \end{aligned}
    \end{equation}
\end{proof}
Now we can derive Theorem \ref{thm: regret bound non Gaussian} as a direct corollary of Theorem \ref{thm: MLA-UCB non Gaussian}.
\begin{proof}[Proof of Theorem \ref{thm: regret bound non Gaussian}]
    Take $\epsilon = \frac{1}{2}(\ln T)^{-1/3}$, then \eqref{eqn: regret batch eps} implies 
    \begin{equation}
    \begin{aligned}
        \frac{1}{m}\E[\reg]\leq& \sum_{k\neq k^\star}\lb\lb\sqrt{\delta_k}+\sqrt{1-\rho_k^2}\rb^2 \frac{2\sigma_k^2\ln T}{m\Delta_k^2} +\frac{C\sigma_k^2(\ln T)^{2/3}}{m\Delta_k^2}+ C\sqrt{\ln T}\rb\Delta_k\\
        +& C\Delta_{\max}T\frac{\exp(-Cm^{1/3})}{1-\exp(-Cm^{1/3})} 
    \end{aligned}
    \end{equation}
    Under Assumption \ref{asm: m and T}, the term $m^{1/3}\geq (\ln T)^{4/3}$, thus $T\exp(-Cm^{1/3}) \leq  \exp(\ln T-C(\ln T)^{4/3}) = O(1)$, and $1-\exp(-Cm^{1/3}) = \Omega(1)$, thus $C\Delta_{\max}T\frac{\exp(-Cm^{1/3})}{1-\exp(-Cm^{1/3})} \leq C \Delta_{\max}\leq C\sum_{k\neq k^\star} \Delta_k$. 
    Moreover, $\lb\sqrt{\delta_k}+\sqrt{1-\rho_k^2}\rb^2  \leq (1-\rho_k^2)+3\sqrt{\delta_k}$ for any $\delta_k\leq 1$. As a result, if we take $\delta_k = (\ln T)^{-2/3}$, then the sample size condition \eqref{eqn: sample size condition batch} is equivalent to $N_k = \Omega(\frac{\sigma_k^2}{m\Delta_k^2}(\ln T )^{5/3})$, and we obtain the desired regret bound 
    \begin{equation}
        \frac{1}{m}\E[\reg]\leq \sum_{k\neq k^\star}\lb \frac{2(1-\rho_k^2)\sigma_k^2\ln T}{m\Delta_k^2} +\frac{C\sigma_k^2(\ln T)^{2/3}}{m\Delta_k^2}\rb\Delta_k.
    \end{equation}
\end{proof}
\end{document}